\Crefname{figure}{Figure}{Figures}
\newtheorem{theorem}{Theorem}[section]
\newtheorem{prop}[theorem]{Proposition}
\newtheorem{lemma}[theorem]{Lemma}
\newtheorem{cor}[theorem]{Corollary}
\theoremstyle{definition}
\newtheorem{definition}[theorem]{Definition}
\newtheorem{exercise*}[exercise]{$\star$ Exercise}
\theoremstyle{remark}
\newtheorem{remark}[theorem]{Remark}
\newtheorem{example}[theorem]{Example}
\newtheorem{question}[theorem]{Question}
\DeclareMathOperator*{\colim}{colim}
\DeclareMathOperator{\cone}{cone}
\DeclareMathOperator*{\hocolim}{hocolim}
\DeclareMathOperator{\res}{Res}
\DeclareMathOperator{\Spec}{Spec}
\DeclareMathOperator{\tel}{tel}
\DeclareMathOperator{\Ind}{Ind}
\DeclareMathOperator{\Sub}{Sub}
\newcommand{\C}{\mathbb{C}}
\newcommand{\R}{\mathbb{R}}
\newcommand{\Z}{\mathbb{Z}}
\newcommand{\Q}{\mathbb{Q}}
\newcommand{\T}{\mathbb{T}}
\newcommand{\N}{\mathbb{N}}
\newcommand{\xra}{\xrightarrow}
\def\cC{\mathcal C}\def\cD{\mathcal D}
\def\cS{\mathcal S}
\def\cX{\mathcal X}
\def\cC{\mathcal C}\def\cD{\mathcal D}
\def\cS{\mathcal S}
\def\cX{\mathcal X}
\newcommand{\type}[1]{\mathrm{type}_{#1}}
\newcommand{\subjconj}[1]{\le^G\!#1}
\newcommand{\tti}[1]{\mathrm{thick}^{\otimes}({#1})}
\DeclareMathOperator{\Alg}{Alg}
\DeclareMathOperator{\supp}{supp}
\DeclareMathOperator{\Mod}{Mod}
\DeclareMathOperator{\Loc}{Loc}
\DeclareMathOperator{\Spc}{Spc}
\newcommand{\SH}{\mathcal{SH}}
\newcommand{\NN}{\overline{\N}}
\newcommand{\F}{\mathbb{F}}
\begin{document}

\title{On the Balmer spectrum for compact Lie groups}

\author{Tobias Barthel}
\email{tbarthel@math.ku.dk}
\address{Department of Mathematical Sciences, University of Copenhagen, Universitetsparken 5, 2100 K{\o}benhavn {\O}, Denmark}

\author{J.P.C.~Greenlees}
\email{john.greenlees@warwick.ac.uk}
\address{Warwick Mathematics Institute, Zeeman Building, Coventry CV4 7AL, UK}

\author{Markus Hausmann}
\email{hausmann@math.ku.dk}
\address{Department of Mathematical Sciences, University of Copenhagen, Universitetsparken 5, 2100 K{\o}benhavn {\O}, Denmark}
\thanks{TB was supported by the Danish National Research Foundation Grant DNRF92 and the European Unions Horizon 2020 research and innovation programme under the Marie Sklodowska-Curie grant agreement No. 751794. JPCG is grateful to the EPSRC for support from
  EP/P031080/1. MH was supported by the Danish National Research Foundation Grant DNRF92.\\
All three authors are grateful to the Isaac Newton Institute (funded by EPSRC Grant number EP/R014604/1) for the opportunity to talk during the Homotopy Harnessing Higher Structures Programme.}

\classification{55P42, 55P91}
\keywords{Equivariant spectra, thick tensor-ideals, Balmer spectrum, space of subgroups}


\begin{abstract} We study the Balmer spectrum of the category of finite $G$-spectra for a compact Lie group $G$, extending the work for finite $G$ by Strickland, Balmer--Sanders, Barthel--Hausmann--Naumann--Nikolaus--Noel--Stapleton and others. We give a description of the underlying set of the spectrum and show that the Balmer topology is completely determined by the inclusions between the prime ideals and the topology on the space of closed subgroups of $G$. Using this, we obtain a complete description of this topology for all abelian compact Lie groups and consequently a complete classification of thick tensor-ideals. For general compact Lie groups we obtain such a classification away from a finite set of primes $p$.
\end{abstract}
\maketitle

\tableofcontents

\section{Introduction}
Let $\cC$ be a triangulated category equipped with a compatible symmetric monoidal structure. A central problem in studying $\cC$ is the classification of its thick tensor-ideals. This problem can be reformulated~\cite{Bal05} as determining the Balmer spectrum $\Spc(\cC)$, a topological space associated to $\cC$ whose points are the prime ideals of $\cC$.
One of the first examples where a complete description of $\Spc(\cC)$ was obtained is the thick subcategory theorem of Hopkins--Smith \cite{HS98} for the category of finite spectra, which is fundamental to our understanding of the stable homotopy category. In recent years there has been significant progress towards obtaining such a description also for the category of finite genuine $G$-spectra $\mathcal{SH}^c_G$ for a finite group $G$. Balmer--Sanders \cite{BS17b}, building on unpublished work of Strickland, gave a description of the underlying set of $\Spc(\mathcal{SH}^c_G)$ and showed that the topology and hence the classification of thick tensor-ideals is determined by the inclusions between the prime ideals. They further determined which inclusions occur for $G=\Z/p$ and deduced a complete description of the Balmer spectrum for all finite groups $G$ of square-free order. This classification was extended to all finite abelian groups in \cite{BHN^+17}, by computing the height shift of the generalized Tate--construction for Lubin--Tate theory as well as making use of fixed-point properties of explicit finite equivariant complexes previously considered by Arone \cite{Aro98} and Arone--Lesh \cite{AL17}.

The goal of the present paper is to generalize these results to the category $\mathcal{SH}_{G}^c$ of finite genuine $G$-spectra for a compact Lie group $G$. Similarly to the finite group case, we obtain a complete description of $\Spc(\mathcal{SH}_{G}^c)$ as a set for all compact Lie groups, and a classification of the Balmer topology and the tt-ideals for all \emph{abelian} compact Lie groups, in particular tori. While some of the structural properties are similar to the finite group case, there are also new phenomena, 
the most notable being the interplay of the Hausdorff topology on the space of closed subgroups of $G$ (which is discrete for finite groups) and the Balmer topology on $\Spc(\mathcal{SH}_{G}^c)$. In particular, it is no longer true that the Balmer topology is determined by the inclusions among prime ideals alone. Another difference to the finite group case is that even in the $p$-local category $\mathcal{SH}_{G,(p)}^c$ not every thick tensor-ideal is finitely generated.

\subsection*{Statement of the results} We now describe the results of this paper in more detail. Recall that a thick tensor-ideal (=tt-ideal) in a tensor triangulated category $(\cC,\otimes,\mathbbm{1})$ is a triangulated subcategory that is closed under retracts and tensor product with an arbitrary object of $\cC$. A proper tt-ideal $I$ is said to be prime if $X\otimes Y\in I$ implies that $X\in I$ or $Y\in I$. The Balmer spectrum is the set of all prime tt-ideals, with Balmer topology generated by the closed sets $\supp(X)=\{\wp\mid X\notin \wp\}$ for all $X\in \cC$. We are interested in determining this spectrum $\Spc(\cC)$ in the case where $\cC$ is the homotopy category $\mathcal{SH}_{G}^c$ of finite $G$-spectra for a compact Lie group $G$ with the smash product $\wedge$ as the monoidal product. For simplicity, we work in the $p$-local homotopy category $\mathcal{SH}_{G,(p)}^c$, from which the global result can be reconstructed as explained in \cite[Sec. 6]{BS17b}. We rely on the computation of the spectrum of the homotopy category of non-equivariant finite $p$-local spectra, due to Hopkins and Smith \cite{HS98}, which we briefly recall. Let $\NN=\Z_{\ge 0}\cup \{\infty\}$. Then every prime ideal is of the form $P(n)=\{ X\mid  K(n-1)_*(X)=0\}$ for some $n\in \NN_{>0}$, where $K(n)$ denotes the $n$-th Morava $K$-theory. These primes $P(n)$ are pairwise distinct and form a descending chain
\[ 
P(1)\supset P(2) \supset \hdots \supset P(\infty).
\]
In order to construct prime ideals for finite $G$-spectra, one makes use of the geometric fixed point functors
\[ 
\Phi^H\colon\mathcal{SH}_{G,(p)}^c\to \mathcal{SH}_{(p)}^c 
\]
for all closed subgroups $H$ of $G$. Since these functors are symmetric monoidal, the subcategories
\[ 
P_G(H,n)=\{X\in \mathcal{SH}_{G,(p)}^c\mid K(n-1)_*(\Phi^H(X))=0\} 
\]
are again prime ideals. Our first result (\Cref{thm:underlyingset}) then says:

\begin{theorem}[(The underlying set of $\Spc(\mathcal{SH}_{G,(p)}^c)$)] Let $G$ be a compact Lie group. Then every prime ideal of the homotopy category of finite $p$-local $G$-spectra $\mathcal{SH}_{G,(p)}^c$ is of the form $P_G(H,n)$ for some closed subgroup $H$ of $G$ and $n\in \NN_{>0}$. Moreover, $P_G(H,m)=P_G(K,n)$ if and only if $H$ is $G$-conjugate to $K$ and $m=n$.
\end{theorem}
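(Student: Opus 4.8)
The plan is to adapt the approach of Balmer--Sanders to the compact Lie setting, isolating the one point where genuinely new input is required.

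\emph{Exhaustion.} I would first note that ``every prime is of the form $P_G(H,n)$'' is equivalent to surjectivity of the comparison map $\coprod_{(H)}\Spc(\mathcal{SH}_{(p)}^c)\to\Spc(\mathcal{SH}_{G,(p)}^c)$ induced by the functors $\Phi^H$, the coproduct running over conjugacy classes of closed subgroups of $G$: by \cite{HS98} we have $\Spc(\mathcal{SH}_{(p)}^c)=\{P(n):n\in\NN_{>0}\}$, and the image of $\Spc(\Phi^H)$ is exactly $\{(\Phi^H)^{-1}P(n)\}_{n}=\{P_G(H,n)\}_{n}$, since $(\Phi^H)^{-1}P(n)=P_G(H,n)$ straight from the definitions. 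By Balmer's surjectivity criterion, this map is surjective as soon as the family $\{\Phi^H\}_{(H)}$ jointly detects $\otimes$-nilpotence of morphisms: whenever $f$ is a morphism in $\mathcal{SH}_{G,(p)}^c$ with $\Phi^H(f)$ $\otimes$-nilpotent for every closed $H\le G$, then $f$ is $\otimes$-nilpotent. For finite $G$ this is the equivariant nilpotence theorem of \cite{BS17b} (building on unpublished work of Strickland). For a general compact Lie group $G$ the plan is to reduce it --- by dualizing $f$ and passing to the tensor algebra on the resulting finite $G$-spectrum --- to the statement that an element $x\in\pi_*^G(R)$ of a ring $G$-spectrum $R$ is nilpotent as soon as each image $\Phi^H(x)$ is, and to establish the latter using the tom Dieck splitting of equivariant stable homotopy together with the nonequivariant nilpotence theorem.

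\emph{Distinctness.} The implication $\Leftarrow$ is immediate: conjugation by $g\in G$ is a symmetric monoidal self-equivalence of $\mathcal{SH}_{G,(p)}^c$ taking $\Phi^H$ to $\Phi^{gHg^{-1}}$, so $P_G(H,n)=P_G(gHg^{-1},n)$, while $P(m)=P(n)$ forces $m=n$. For $\Rightarrow$, assume $P_G(H,m)=P_G(K,n)$. I would first recover the conjugacy class: since $\Phi^K(\Sigma^\infty_+ G/H)\simeq\Sigma^\infty_+(G/H)^K$ and $(G/H)^K$ is nonempty --- in which case it is a nonzero finite complex, hence has nonvanishing $K(j)_*$ for every $j$ --- exactly when $K$ is subconjugate to $H$, one gets $\Sigma^\infty_+ G/H\in P_G(K,n)\iff K\not\le_G H$; as $\Sigma^\infty_+ G/H\notin P_G(H,m)$ always, this forces $K\le_G H$, and symmetrically $H\le_G K$, so $(H)=(K)$. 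To then see $m=n$, I would use the symmetric monoidal inflation functor $\epsilon^*\colon\mathcal{SH}_{(p)}^c\to\mathcal{SH}_{G,(p)}^c$ along $G\to1$: one has $\Phi^H\circ\epsilon^*\simeq\id$, so $\epsilon^*F\in P_G(H,j)\iff F\in P(j)$ for every finite $p$-local spectrum $F$, and choosing $F$ of type exactly $m$ (which exists by \cite{HS98}) yields $\epsilon^*F\in P_G(H,m)\setminus P_G(H,m+1)$; combined with $P_G(H,m)=P_G(H,n)$ and the nesting $P_G(H,1)\supseteq P_G(H,2)\supseteq\cdots$ this forces $n\le m$, and by symmetry $m\le n$, whence $m=n$ --- and the same comparison shows that if one index is $\infty$ then so is the other.

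\emph{Main obstacle.} The distinctness statement and the reduction of exhaustion to joint detection of nilpotence are formal; the hard part is entirely the equivariant nilpotence theorem for compact Lie groups. The continuum of closed subgroups of $G$, and the possibly positive-dimensional Weyl groups $W_GH$, make the descent to the nonequivariant nilpotence theorem substantially more delicate than in the finite case, and for non-abelian $G$ one expects it to hold only after inverting a finite set of primes $p$ --- which is precisely the restriction under which the paper obtains its general classification of thick tensor-ideals.
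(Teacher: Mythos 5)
Your \emph{distinctness} argument is correct and essentially the paper's own: the orbit spectra $G/H_+$ together with inflated complexes of exact type $m$ separate the $P_G(H,n)$ (these are the special cases $Y=S^0$ and the general $Y$ of the paper's type-$(\subjconj{H},n)$ spectra $G/H_+\wedge i_G^*(Y)$). The only step you elide is the passage from mutual subconjugacy of $H$ and $K$ to conjugacy, which for compact Lie groups needs the fact that a closed subgroup cannot be conjugate to a proper closed subgroup of itself; this is proved in \Cref{lem:primerelations} by comparing components, and is harmless.

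The \emph{exhaustion} step, however, has a genuine gap. Balmer's surjectivity criterion applies to a single tt-functor: from joint detection of $\otimes$-nilpotence by the family $\{\Phi^H\}$ you get surjectivity of $\Spc$ applied to the product functor $\mathcal{SH}^c_{G,(p)}\to\prod_{(H)}\mathcal{SH}^c_{(p)}$. For a finite group the product is finite and $\Spc$ of a finite product is the coproduct of the spectra, which is why Balmer--Sanders can conclude. For a compact Lie group there are in general infinitely many conjugacy classes of closed subgroups (already for $G=\T$), and the Balmer spectrum of an infinite product of tt-categories is strictly larger than the disjoint union of the individual spectra: it contains primes attached to non-principal ultrafilters on the index set. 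Hence surjectivity of the map induced by the product functor does not show that every prime of $\mathcal{SH}^c_{G,(p)}$ is pulled back along a \emph{single} $\Phi^H$; a priori there could be ``limit'' primes hit only by such ultrafilter points, and your outline offers no argument excluding them. This is precisely where the paper departs from the Balmer--Sanders route: it proves, via the Hovey--Palmieri--Strickland-style \Cref{prop:abstractsupport} (finite localization plus Thomason's theorem) combined with the ring-detection form of the nilpotence theorem (\Cref{prop:eqsupport}), that every tt-ideal is determined by its type function (\Cref{cor:eqsupport}), so that a prime $\wp$ equals $\bigcap_H P_G(H,\type{\wp}(H))$; one then picks $H$ minimal with $\type{\wp}(H)>0$ (possible because descending chains of closed subgroups of a compact Lie group stabilize) and smashes a putative $X\in P_G(H,\type{\wp}(H))\setminus\wp$ with a complex of type $(\subjconj{H},0)$ to contradict primality. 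Some replacement for this step is needed in your plan. Finally, your closing paragraph mislocates the difficulty: the equivariant nilpotence theorem holds for \emph{all} compact Lie groups at \emph{all} primes by the soft telescope-plus-conservativity argument of \Cref{sec:nilpotence} (no tom Dieck splitting and no inverting of primes is required), and the primes dividing $|G|$ only enter later, in determining the inclusions among the $P_G(H,n)$ (\Cref{thm:order}), not the underlying set of the spectrum.
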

The case of finite groups is due to Balmer--Sanders \cite{BS17b}. Our proof is a bit different and builds on a general method introduced by Hovey--Palmieri--Strickland in \cite{HPS97}.

We then study the Balmer topology on
$\Spc(\mathcal{SH}_{G,(p)}^c)$. For general $G$, our main result says
that it can be expressed in terms of the inclusions between prime
ideals and the Hausdorff topology on the set $\Sub(G)$ of closed
subgroups of $G$. Let $\Sub(G)/G$ denote the orbit space by the
conjugation $G$-action, equipped with the quotient topology making it
a compact and totally-disconnected Hausdorff space \cite[Thm.
5.6.1]{tD79}. Using the poset structure on the set of prime ideals, we define a function
$f\colon\Sub(G)/G\to \NN$ to be  \emph{admissible} if $P_G(K,f(K))$ is not contained in $P_G(H,f(H)+1)$ for all subgroups $K,H$ with $f(H)<\infty$. The Balmer topology can then be described in the following way (\Cref{cor:topbasis}):
\begin{theorem}[(The topology on $\Spc(\mathcal{SH}_{G,(p)}^c)$)] For every compact Lie group $G$ the open sets
\[ 
\{ P_G(H,n)\mid  n\leq f(H) \} 
\]
are a basis for the Balmer topology on $\Spc(\mathcal{SH}_{G,(p)}^c)$, where $f$ ranges through all locally constant admissible functions $\Sub(G)/G\to \NN$.
\end{theorem}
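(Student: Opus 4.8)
The plan is to compute the quasi-compact opens of Balmer's topology explicitly and then match them against the proposed basis. Recall that the sets $U(X):=\Spc(\mathcal{SH}_{G,(p)}^c)\setminus\supp(X)=\{\wp\mid X\in\wp\}$, for $X$ ranging over finite $G$-spectra, form a basis of quasi-compact opens for the Balmer topology; write $W_f:=\{P_G(H,n)\mid n\leq f(H)\}$ for the proposed basic sets. It then suffices to prove (A) every $U(X)$ equals some $W_f$ with $f$ locally constant admissible, and (B) every such $W_f$ is open. Granting (A) and (B): every $U(X)$ is one of the $W_f$, each $W_f$ is open, and $\{U(X)\}$ is a basis, so the $W_f$ form a basis too.

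First I would establish (A). By \Cref{thm:underlyingset} we may write $\Spc(\mathcal{SH}_{G,(p)}^c)=\Sub(G)/G\times\NN_{>0}$, and then $X\in P_G(H,n)$ means $K(n-1)_*(\Phi^HX)=0$, which by the Hopkins--Smith classification (the set $\{i\mid K(i)_*(\Phi^HX)=0\}$ is an initial segment of $\NN$) is equivalent to $n\leq\mathrm{type}(\Phi^HX)$. Hence $U(X)=W_{f_X}$ with $f_X(H):=\mathrm{type}(\Phi^HX)$, which descends to $\Sub(G)/G$ since conjugation induces an equivalence $\Phi^H\simeq\Phi^{gHg^{-1}}$. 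Admissibility of $f_X$ is immediate: one has $X\in P_G(K,f_X(K))$ (reading $P_G(K,0)$ as the whole category), so $P_G(K,f_X(K))\subseteq P_G(H,f_X(H)+1)$ with $f_X(H)<\infty$ would force $K(f_X(H))_*(\Phi^HX)=0$, contradicting $\mathrm{type}(\Phi^HX)=f_X(H)$. The one substantial point is that $f_X$ is locally constant, i.e.\ that $H\mapsto\mathrm{type}(\Phi^HX)$ is locally constant on $\Sub(G)$; this I would take from the earlier part of the paper. (It reduces to showing $\{H\in\Sub(G)\mid K(m)_*(\Phi^HX)=0\}$ is clopen for each $m$, which one extracts from a finite cell presentation of $X$, exactness of $K(m)_*$, and local constancy of the homotopy types of the fixed-point manifolds $(G/K)^H$ occurring as $\Phi^H$ of the cells of $X$.) This is the step I expect to be the main technical obstacle.

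For (B), the key elementary observation is that admissibility of $f$ is precisely the statement that $W_f$ is an up-set for the inclusion order on primes (opens being always up-sets, since $X\in\wp\subseteq\wp'$ forces $X\in\wp'$). Indeed, if $P_G(H_0,n_0)\in W_f$ — so $n_0\leq f(H_0)$ — and $P_G(H_0,n_0)\subseteq P_G(K,l)$, then assuming $l>f(K)$ one gets $P_G(H_0,f(H_0))\subseteq P_G(H_0,n_0)\subseteq P_G(K,l)\subseteq P_G(K,f(K)+1)$, contradicting admissibility if $f(K)<\infty$; and if $f(K)=\infty$ then $l\leq f(K)$ anyway, so in either case $P_G(K,l)\in W_f$. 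Moreover, since $f$ is locally constant, every horizontal slice $\{[H]\mid P_G(H,n)\in W_f\}=f^{-1}(\{n,n+1,\dots,\infty\})$ is clopen in $\Sub(G)/G$. A subset of $\Spc(\mathcal{SH}_{G,(p)}^c)$ which is an up-set for the inclusions and whose slices are open over $\Sub(G)/G$ is open, by the description of the Balmer topology established earlier (which expresses it exactly through the inclusions among primes and the topology on $\Sub(G)/G$); hence $W_f$ is open. The substance hidden in that last invocation — and the second place where real work is needed — is the construction of sufficiently many finite $G$-spectra with prescribed geometric-fixed-point types (inflations of non-equivariant type-$n$ complexes, orbit spectra $\Sigma^\infty_+(G/K)$, cofibers of transfers, and the blueshift of the generalized Tate construction); with that in hand, (B) is the formal consequence above.
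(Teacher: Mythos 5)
Your step (A) is exactly the paper's argument: $U(X)=\{\wp\mid X\in\wp\}=\{P_G(H,n)\mid n\le \type{X}(H)\}$ by \Cref{thm:underlyingset} and the definition of $P_G(H,n)$, with local constancy of $\type{X}$ supplied by \Cref{prop:isotropylocconstant} and admissibility checked as you do. That half is fine.

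The gap is in (B). You deduce openness of $W_f$ from the claim that ``an up-set for the inclusions whose slices are open over $\Sub(G)/G$ is open, by the description of the Balmer topology established earlier'' --- but no such description exists prior to this statement: the assertion that the Balmer topology is determined by the inclusions among primes together with the topology on $\Sub(G)/G$ is precisely the content (and the point) of the result you are proving, so as written the argument is circular. The paper closes (B) differently and more directly: it proves (\Cref{thm:criterion}(2)) that every locally constant admissible $f$ is actually realized as $f=\type{X}$ for a single finite $G$-spectrum $X$, whence $W_f=U(X)$ is itself a basic open; the work is in \Cref{lem:shift} (lifting $v_n$-self maps through geometric fixed points via \Cref{lem:lift}, and inducing up with \Cref{lem:induced} to raise the type at one subgroup without lowering it elsewhere), combined with local constancy, compactness of $\Sub(G)/G$, and smashing/wedging finitely many such spectra (cf.\ \Cref{prop:approx}). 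You do name ``construction of sufficiently many finite $G$-spectra with prescribed geometric-fixed-point types'' as the hidden work, but you then feed it into the unproved auxiliary claim ``up-set with open slices $\Rightarrow$ open'' rather than using it to conclude $W_f=U(X)$ directly; that auxiliary claim is nowhere established in the paper and is not needed once realization is available. (Two smaller points: blueshift of generalized Tate constructions enters only the determination of the poset structure, not the realization step, which uses the non-inclusions packaged in admissibility as a hypothesis; and note that \Cref{rem:o2} warns that admissible functions need not be upper semi-continuous, so any purely poset-theoretic openness criterion has to be handled with care.) To repair your write-up, replace the up-set/slices step by a proof (or citation) of the realization statement of \Cref{thm:criterion}(2) and conclude $W_f=U(X)$.
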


Since admissibility depends only on the poset structure  on the set of prime ideals, the Balmer topology only depends on this poset structure and the Hausdorff topology on $\Sub(G)/G$. Consequently, the classification of tt-ideals can also be described in terms of admissible functions. Given a finite $p$-local $G$-spectrum $X$, let $\type{X}\colon\Sub(G)/G\to \NN$ be the \emph{type function} of $X$, which assigns to a conjugacy class $[H]$ the chromatic type of the geometric fixed points $\Phi^H(X)$, i.e., the minimal $n$ such that $\Phi^H(X)\notin P(n+1)$. This definition extends to tt-ideals $I$, via $\type{I}(H)=\min(\type{X}(H)\mid  X\in I)$. Then we have (\Cref{thm:criterion}):
\begin{theorem}[(Classification of tt-ideals in $\mathcal{SH}_{G,(p)}^c$)]
Let $G$ be a compact Lie group.
\begin{enumerate}
\item The assignment $I\mapsto \type{I}$ defines a bijection from the set of tt-ideals in the homotopy category of finite $p$-local $G$-spectra $\mathcal{SH}_{G,(p)}^c$ to the set of upper semi-continuous admissible functions, i.e., those admissible functions $f\colon\Sub(G)/G\to \NN$ for which the preimage $f^{-1}([0,n))$ is open for all $n\in \NN$.
\item Such a tt-ideal $I$ is finitely generated if and only if its type function $\type{I}$ is locally constant.
\end{enumerate}
\end{theorem}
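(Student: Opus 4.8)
The plan is to deduce the classification of tt-ideals from the description of the Balmer topology in the preceding theorem, using the general Balmer-theoretic dictionary between tt-ideals and Thomason-closed subsets of $\Spc(\mathcal{SH}_{G,(p)}^c)$. Recall from \cite{Bal05} that radical tt-ideals in a tensor triangulated category $\cC$ are in inclusion-preserving bijection with the Thomason-closed subsets of $\Spc(\cC)$, i.e.\ unions of closed subsets each having quasi-compact complement; moreover in a rigid category such as $\mathcal{SH}_{G,(p)}^c$ every tt-ideal is radical (by the standard argument using strong dualizability, reducing $X^{\wedge k}\in I$ to $X\in I$). So part (1) amounts to matching up Thomason-closed subsets with upper semi-continuous admissible functions, and part (2) amounts to recognizing which Thomason-closed subsets correspond to finitely generated ideals.

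\textbf{Step 1: Encode closed subsets by functions.} First I would record that an arbitrary subset $Y\subseteq \Spc(\mathcal{SH}_{G,(p)}^c)$ which is a union of the basic closed sets $\supp(X)$ is determined by, and determines, a function $g\colon \Sub(G)/G\to \NN$ via $g([H]) = \min\{\, n \mid P_G(H,n)\in Y\,\}$ (with $g([H])=\infty$ if no such $n$), because $Y$ is downward-closed in the chain direction. Using the fact that $P_G(H,n)\in\supp(X)$ iff $K(n-1)_*(\Phi^H X)\neq 0$ iff $n\le \type{X}(H)+1$ — so that $\supp(X)$ corresponds to the function $\type{X}+1$ — and that $\type{I}(H) = \min_{X\in I}\type{X}(H)$, one sees that the tt-ideal $I$ with $\bigcup_{X\in I}\supp(X) = \{P_G(H,n)\mid n\le \type{I}(H)+1\}$ has complement corresponding to $\type{I}$. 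It remains to characterize which $Y$ are \emph{Thomason-closed}.

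\textbf{Step 2: Thomason-closed $=$ upper semi-continuous admissible.} The key input is the basis from the previous theorem: the quasi-compact open subsets of $\Spc(\mathcal{SH}_{G,(p)}^c)$ are exactly the finite unions of basic opens $\{P_G(H,n)\mid n\le f(H)\}$ with $f$ locally constant admissible, hence (since $\Sub(G)/G$ is compact Hausdorff and totally disconnected, so finite unions of locally constant functions absorb into one locally constant function after taking pointwise maximum — using that admissibility is a pointwise-pair condition closed under increasing each value... here one must check $\max$ of admissible functions is admissible, which follows because the relevant inclusion condition $P_G(K,f(K))\not\subseteq P_G(H,f(H)+1)$ is preserved under raising $f(K)$ and is monotone in the right way). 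Then $Y$ is Thomason-closed iff its complement $U$ is a union of such quasi-compact opens, iff $U = \{P_G(H,n)\mid n > f(H)\}$ for $f$ a pointwise supremum of locally constant admissible functions. I would then verify that an arbitrary sup of locally constant admissible functions is exactly an \emph{upper semi-continuous} admissible function: admissibility passes to sups by the same monotonicity argument; upper semi-continuity ($f^{-1}[0,n)$ open) is precisely the condition that $f$ is a sup of locally constant functions on a compact totally disconnected space (each $f^{-1}[0,n)$ being open and, by compactness, a finite union of clopen sets, lets one build locally constant minorants converging up to $f$). Conversely every locally constant admissible $f$ is trivially u.s.c.\ admissible. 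This establishes the bijection of part (1), with inclusions of ideals corresponding to pointwise $\le$ of type functions, matching the Balmer bijection.

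\textbf{Step 3: Finite generation.} For part (2), I would use that in a rigid tt-category a tt-ideal $I$ is finitely generated iff it equals $\langle X\rangle$ for a single object iff the corresponding Thomason-closed set $Y$ is actually closed \emph{with quasi-compact complement} in one piece, equivalently $\supp(X) = Y$ for some $X$, equivalently the complement $U$ is quasi-compact open. By Step 2, $U$ is quasi-compact open precisely when $f = \type{I}$ is locally constant. One direction: if $\type{I}$ is locally constant and admissible, the basic open $\{P_G(H,n)\mid n\le\type{I}(H)\}$ is quasi-compact, its complement is $\supp(X)$ for a suitable $X$ (realize the type function by a smash product of explicit complexes with prescribed geometric fixed-point types over the finitely many clopen pieces — existence of such $X$ is guaranteed since on each clopen piece $\type{I}$ is constant and one can use induction/restriction from the relevant subgroup together with non-equivariant type-$n$ complexes), and $I = \langle X\rangle$. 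Conversely if $I=\langle X_1,\dots,X_k\rangle$ then $\type{I} = \min_i \type{X_i}$, and I would argue each $\type{X_i}$ is locally constant (continuity of $H\mapsto \Phi^H X$ in the Hausdorff topology on subgroups — this is where the compact Lie input is used, and may have been established earlier in the paper), hence so is the finite minimum. \textbf{The main obstacle} I anticipate is Step 2, specifically the precise interplay between the combinatorial admissibility condition and the topology: showing that the quasi-compact opens are \emph{exactly} the finite unions of the given basic opens (one inclusion is the previous theorem; the other requires that every quasi-compact open in this spectrum, built from arbitrary $\supp(X)$'s, already has this shape), and then correctly identifying arbitrary sups of locally constant admissible functions with u.s.c.\ admissible ones on a compact totally disconnected Hausdorff space. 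Verifying that $\type{X}$ is locally constant — i.e.\ the continuity of geometric fixed points along the Hausdorff topology on $\Sub(G)/G$ — is the geometric heart of the finite-generation statement and the one place the compact Lie structure genuinely enters.
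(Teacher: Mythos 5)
Your overall strategy—classify tt-ideals via Balmer's bijection with Thomason subsets and then translate Thomason subsets into functions using the description of the topology—is attractive, but it has a genuine gap at its load-bearing point. Within this paper the topology theorem you invoke (\Cref{cor:topbasis}) is \emph{deduced from} the classification theorem you are asked to prove (it is a corollary of \Cref{thm:criterion}), so citing it is circular unless you supply an independent proof. More importantly, even granting \Cref{cor:topbasis} as stated, your Step 2 does not follow: the basis statement only says that each set $V_f=\{P_G(H,n)\mid n\leq f(H)\}$ with $f$ locally constant admissible is open and that these sets generate the topology; it does not say that each $V_f$ is quasi-compact. Since the quasi-compact opens of a Balmer spectrum are exactly the sets $U_X$, the missing half of your identification ``quasi-compact open $=$ finite union of basic opens'' is precisely the assertion that every locally constant admissible $f$ equals $\type{X}$ for some finite $G$-spectrum $X$ — i.e.\ the realization statement that constitutes the hard content of part (2). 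Your Step 3 sketch for producing such $X$ (``induction/restriction from the relevant subgroup together with non-equivariant type-$n$ complexes'') would not work: complexes of the form $G/H_+\wedge i_G^*F(n)$ only realize functions taking the value $n$ on subconjugates of $H$ and $\infty$ elsewhere, and admissibility does \emph{not} prevent $f$ from taking larger values on smaller subgroups (there is no constraint unless the inclusion is $p$-subcotoral), so one cannot arrange $\type{X_H}\geq f$ with $\type{X_H}(H)=f(H)$ by these means. The paper's proof needs the lifting of $v_n$-self maps along geometric fixed points (\Cref{lem:lift}), the type-shifting construction using the periodicity theorem and induction (\Cref{lem:shift}, resting on \Cref{lem:induced}), and then the compactness-of-$\Sub(G)/G$ smash-product argument; none of this is replaced by the abstract Balmer formalism.

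Two further points. First, the surjectivity of $I\mapsto\type{I}$ onto upper semi-continuous admissible functions requires, in your framework, that every such $f$ be a pointwise \emph{infimum} (not supremum — several of your inequality and sup/inf directions are reversed: $\supp(X)=\{P_G(H,n)\mid n\geq\type{X}(H)+1\}$ and $\type{I}=\inf_{X\in I}\type{X}$) of locally constant admissible functions agreeing with $f$ at each prescribed point; your purely topological approximation on the compact totally disconnected space $\Sub(G)/G$ ignores the requirement that the approximants be \emph{admissible}, which is exactly what the paper secures by taking them to be type functions of actual spectra. Second, the one ingredient you correctly flag and may legitimately import is the local constancy of $\type{X}$ (\Cref{prop:isotropylocconstant}), which is proved earlier and independently via Montgomery--Zippin; with it, your argument for the ``only if'' direction of part (2) (a finite minimum of locally constant functions is locally constant) is fine, as is your use of rigidity to see that all tt-ideals are radical. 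But as it stands the proposal defers, rather than proves, the realization of admissible locally constant functions, and that is the theorem's essential content.
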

The last item can be reformulated as saying that given a function $f\colon\Sub(G)/G\to \NN$, there exists a finite $G$-spectrum $X$ with $\type{X}=f$ if and only if $f$ is admissible and locally constant.

In order to obtain an explicit description of the Balmer topology and the classification of tt-ideals, it is hence necessary to understand the poset structure on $\Spc(\mathcal{SH}_{G,(p)}^c)$ and the topology on $\Sub(G)/G$. The latter topology is fairly well-understood thanks to work of Montgomery-Zippin \cite{MZ42} and tom Dieck~\cite[Sec.~5.6]{tD79}. Unravelling the definitions, determining the poset structure boils down to the following basic geometric question: Given two closed subgroups $K$ and $H$ of $G$ and numbers $n,m\in \NN_{>0}$, does there exist a finite $G$-CW complex $X$ such that the $K$-fixed points $X^K$ have type $\geq n$ and the $H$-fixed points~$X^H$ have type $<m$? The answer to this question is `yes' if and only if $P_G(K,n)\not\subseteq P_G(H,m)$.

This problem appears to be difficult in general and is already wide open for general finite groups $G$. For abelian compact Lie groups $A$, however, the answer is as follows (\Cref{thm:abelian}). Here, we write $rk_p(A')$ for the $p$-rank of a finite abelian group $A'$, i.e., the dimension of $A'\otimes \Z/p$ over $\Z/p$.

\begin{theorem}[(Abelian groups)] Let $A$ be an abelian compact Lie group. Then we have an inclusion $P_A(K,n)\subseteq P_A(H,m)$ if and only if $K$ is a subgroup of $H$, $\pi_0(H/K)$ is a $p$-group and
\[ 
m+ rk_p(\pi_0(H/K))\leq n.
\]

Hence, a function $f\colon \Sub(A)\to \NN$ is admissible if and only if 
\[
f(H)+ rk_p(\pi_0(H/K)) \geq f(K)
\]
for all $K\subseteq H$ with $\pi_0(H/K)$ a $p$-group.
\end{theorem}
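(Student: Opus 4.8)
The plan is to reduce the statement for a general abelian compact Lie group $A$ to the already-known case of finite abelian groups, proved in \cite{BHN^+17}, by exploiting the structure theorem $A\cong T\times A'$ with $T$ a torus and $A'$ finite abelian, together with the behaviour of geometric fixed points under passage to subgroups and quotients. First I would record the elementary direction: if $P_A(K,n)\subseteq P_A(H,m)$, then applying the inclusion to suitable test spectra forces $K$ to be contained in $H$ up to conjugacy (here conjugation is trivial, so genuinely $K\subseteq H$), using that $\Phi^K$ detects the subgroup $K$ via $\Phi^K(\Sigma^\infty_+ (A/K)) \neq 0$ while $\Phi^H$ of the same spectrum vanishes unless $H$ is subconjugate to $K$; combined with the reverse test this pins down $K\subseteq H$. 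A similar fixed-point computation with a representation sphere detecting the identity component shows that if $H/K$ has positive-dimensional identity component then $P_A(K,n)\not\subseteq P_A(H,m)$ for all $n,m$, because $\Phi^K$ of an Euler-class-type complex built from a faithful representation of the circle quotient can be made type $\infty$ while $\Phi^H$ is trivial; hence we may assume $H/K$ is finite. Finally one needs that $\pi_0(H/K)$ must be a $p$-group, which follows from the analogous finite-group statement after quotienting.

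The heart of the matter is then the inclusion $P_A(K,n)\subseteq P_A(H,m)$ under the hypotheses $K\subseteq H$, $\pi_0(H/K)$ a $p$-group, $m+rk_p(\pi_0(H/K))\le n$, together with its sharpness. For the positive direction I would argue that for any finite $A$-spectrum $X$, the geometric fixed points $\Phi^H(X)$ and $\Phi^K(X)$ are related by $\Phi^{H/K}$ applied to $\Phi^K(X)$ regarded as a finite $(H/K)$-spectrum via the Weyl-type action; concretely, $\Phi^H(X)\simeq \Phi^{H/K}\bigl(\Phi^K(X)\bigr)$ as non-equivariant spectra, where $\Phi^K(X)$ carries a genuine $\pi_0(H/K)$-action (the identity component acting trivially since $H/K$ is finite, or rather after reducing to components). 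So the claim becomes: for a finite $p$-group $Q$ and a finite $Q$-spectrum $Y$, one has $K(n-1)_*\Phi^Q(Y) = 0$ whenever $K(n+rk_p(Q)-2)_*(Y^e)=0$ — but with the indexing adjusted, the content is exactly the blue-shift bound $\type{\Phi^Q(Y)} \le \type{Y^e} - rk_p(Q)$ for elementary abelian (and then general) $p$-groups, which is \cite[Main theorem]{BHN^+17}. Thus the positive direction is a formal consequence of the iterated-fixed-point formula plus the finite abelian case.

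For sharpness — that the bound $m+rk_p(\pi_0(H/K))\le n$ cannot be improved — I would invoke the existence, established in the finite case, of finite $Q$-spectra (for $Q$ elementary abelian of rank $r$, these are the Arone--Lesh-type complexes used in \cite{BHN^+17}) realizing the extremal blue shift, namely with $Y^e$ of type exactly $t$ and $\Phi^Q(Y)$ of type exactly $t-r$. Inflating such a $Q=\pi_0(H/K)$-spectrum along $H\to H/K\to \pi_0(H/K)$ and then inducing/inflating appropriately up to $A$ produces the finite $A$-spectrum witnessing $P_A(K,n)\not\subseteq P_A(H,m)$ when $m+rk_p(\pi_0(H/K))> n$. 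The main obstacle I anticipate is bookkeeping the iterated geometric fixed point formula correctly in the compact Lie setting when $H/K$ is a finite group but $A$, $H$, $K$ are positive-dimensional: one must be careful that $\Phi^K(X)$ genuinely lands in finite $W_A(K)$-spectra and that restricting the $W_A(K)$-action to the finite group $\pi_0(H/K)\subseteq W_A(K)$ loses no information about $\Phi^H$, which uses that the identity component of $H/K$ is trivial. Once that is set up, everything reduces to the finite case and the deduction of the admissibility reformulation is immediate by unwinding the definition of admissible function against the inclusion criterion just proved.
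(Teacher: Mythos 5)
The central reduction in your proposal is incorrect, and it removes precisely the new ingredient that the compact Lie case requires. You claim that if $H/K$ has a positive-dimensional identity component then $P_A(K,n)\not\subseteq P_A(H,m)$ for all $n,m$, and on that basis you assume $H/K$ finite for the rest of the argument. This contradicts the statement being proved: the criterion involves only $\pi_0(H/K)$, so for example with $A=H=\T$ and $K=\{e\}$ the theorem asserts $P_{\T}(\{e\},n)\subseteq P_{\T}(\T,n)$ for every $n$. Your proposed witness (an Euler-class-type complex such as $S(V)_+$ for a fixed-point-free representation of the circle quotient) has trivial $H$-geometric fixed points and type-$0$ $K$-geometric fixed points; such an $X$ lies in every $P_A(H,m)$, so it can only rule out inclusions in the opposite direction, and in fact no witness exists: cotoral inclusions \emph{do} yield inclusions of primes. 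Proving this is the main new point beyond the finite abelian case, and the paper does it in \Cref{prop:cotoral}(1) via a Morava $K$-theory Borel localization argument resting on the non-vanishing of $\Phi^{\T}(\underline{K(n)})$ computed in \Cref{thm:moravageom}. Without it, your ``if'' direction fails whenever $H/K$ has a torus part, and your ``only if'' direction is also unsupported: the necessity that $\pi_0(H/K)$ be a $p$-group (and the rank inequality) is deduced by you from the finite-group statement only after the false reduction, whereas the paper obtains the necessary $p$-subcotorality from the Burnside-ring comparison (\Cref{thm:inclusions}\,$i)$, via tom Dieck's computation and \Cref{cor:psubcotoral}).

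The portion of your plan handling finite $p$-group quotients by iterating geometric fixed points and quoting \cite{BHN^+17} is in the same spirit as the paper, which deduces both the positive direction and the sharpness from Parts $iv)$ and $vi)$ of \Cref{thm:inclusions} together with the change-of-group statements of \Cref{prop:changeofgroups}; but even there the interaction with the torus part matters: the paper factors a general $p$-cotoral inclusion as $K\subseteq\omega_H(K)\subseteq H$, using the cotoral inclusion $P_A(K,n)\subseteq P_A(\omega_H(K),n)$ followed by a chain of cyclic $p$-group extensions, and for sharpness promotes $K$ to an intermediate normal subgroup with elementary abelian quotient before invoking the Arone--Lesh input. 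Finally, your displayed blue-shift claim (``$K(n-1)_*\Phi^Q(Y)=0$ whenever $K(n+rk_p(Q)-2)_*(Y^e)=0$'') encodes a shift of $rk_p(Q)-1$, which is off by one and would contradict the sharpness given by the Arone--Lesh complexes; the correct bound is a shift of $rk_p(Q)$, and the inequality you write for the type of $\Phi^Q(Y)$ should point the other way (it is a lower bound, not an upper bound, as free actions show).
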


Note in particular that if $K\subseteq H$ is a subgroup with quotient $H/K$ a torus, then there is an inclusion $P_A(K,n)\subseteq P_A(H,n)$ for every $n$, as follows from a Morava $K$-theory version of the classical Borel Localization Theorem. This is the main new ingredient in addition to the finite abelian case of \cite{BHN^+17}. 

\begin{figure}[h!]
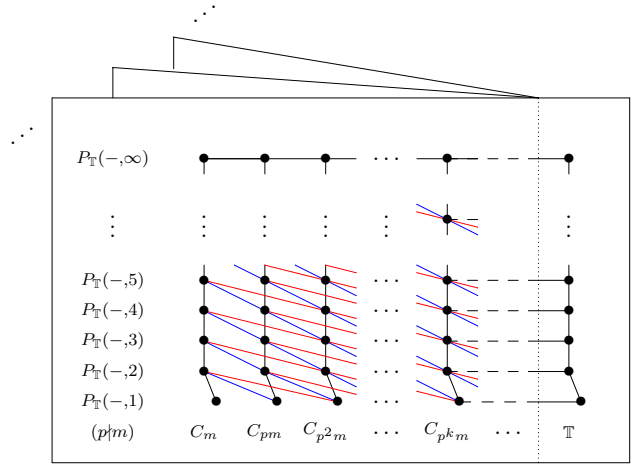

\[
\centering{\scalebox{0.8}{\xy
%
%
(-25,55)*{\iddots};
(5,75)*{\iddots};
%
{\ar@{-} (-20,0)*{};(-20,60)*{}};
{\ar@{-} (-20,60)*{};(60,60)*{}};
{\ar@{..} (60,60)*{};(60,0)*{}};
{\ar@{-} (60,0)*{};(-20,0)*{}};
{\ar@{-} (60,60)*{};(75,60)*{}};
{\ar@{-} (75,60)*{};(75,0)*{}};
{\ar@{-} (75,0)*{};(60,0)*{}};
{\ar@{-} (-10,65)*{};(60,60)*{}};
{\ar@{-} (-10,65)*{};(-10,60)*{}};
{\ar@{-} (0,70)*{};(60,60)*{}};
{\ar@{-} (0,70)*{};(0,64.8)*{}};
%
%
(5,5)*{\scriptstyle C_{m}};
{\ar@{-} (7,10)*{};(5,15)*{}};
{\ar@{-} (5,15)*{};(5,20)*{}};
{\ar@{-} (5,20)*{};(5,25)*{}};
{\ar@{-} (5,25)*{};(5,30)*{}};
{\ar@{-} (5,30)*{};(5,32.5)*{}};
{\ar@{-} (5,47.5)*{};(5,50)*{}};
%
%
{\ar@{-} (17,10)*{};(15,15)*{}};
{\ar@{-} (15,15)*{};(15,20)*{}};
{\ar@{-} (15,20)*{};(15,25)*{}};
{\ar@{-} (15,25)*{};(15,30)*{}};
{\ar@{-} (15,30)*{};(15,32.5)*{}};
{\ar@{-} (15,47.5)*{};(15,50)*{}};
%
%
{\ar@{-} (27,10)*{};(25,15)*{}};
{\ar@{-} (25,15)*{};(25,20)*{}};
{\ar@{-} (25,20)*{};(25,25)*{}};
{\ar@{-} (25,25)*{};(25,30)*{}};
{\ar@{-} (25,30)*{};(25,32.5)*{}};
{\ar@{-} (25,47.5)*{};(25,50)*{}};
%
%
{\ar@{-} (47,10)*{};(45,15)*{}};
{\ar@{-} (45,15)*{};(45,20)*{}};
{\ar@{-} (45,20)*{};(45,25)*{}};
{\ar@{-} (45,25)*{};(45,30)*{}};
{\ar@{-} (45,30)*{};(45,32.5)*{}};
{\ar@{-} (45,40)*{};(45,37.5)*{}};
{\ar@{-} (45,40)*{};(45,42.5)*{}};
{\ar@{-} (45,47.5)*{};(45,50)*{}};
%
%
{\ar@{-} (67,10)*{};(65,15)*{}};
{\ar@{-} (65,15)*{};(65,20)*{}};
{\ar@{-} (65,20)*{};(65,25)*{}};
{\ar@{-} (65,25)*{};(65,30)*{}};
{\ar@{-} (65,30)*{};(65,32.5)*{}};
{\ar@{-} (65,47.5)*{};(65,50)*{}};
%
%
{\ar@{-}@[blue] (17,10)*{};(5,15)*{}};
{\ar@{-}@[blue] (15,15)*{};(5,20)*{}};
{\ar@{-}@[blue] (15,20)*{};(5,25)*{}};
{\ar@{-}@[blue] (15,25)*{};(5,30)*{}};
{\ar@{-}@[blue] (15,30)*{};(10,32.5)*{}};
{\ar@{-} (15,50)*{};(5,50)*{}};
%
%
{\ar@{-}@[blue] (27,10)*{};(15,15)*{}};
{\ar@{-}@[blue] (25,15)*{};(15,20)*{}};
{\ar@{-}@[blue] (25,20)*{};(15,25)*{}};
{\ar@{-}@[blue] (25,25)*{};(15,30)*{}};
{\ar@{-}@[blue] (25,30)*{};(20,32.5)*{}};
{\ar@{-} (25,50)*{};(5,50)*{}};
%
%
{\ar@{-}@[blue] (30,12.5)*{};(25,15)*{}};
{\ar@{-}@[blue] (30,17.5)*{};(25,20)*{}};
{\ar@{-}@[blue] (30,22.5)*{};(25,25)*{}};
{\ar@{-}@[blue] (30,27.5)*{};(25,30)*{}};
{\ar@{-} (30,50)*{};(25,50)*{}};
%
%
{\ar@{-}@[blue] (47,10)*{};(40,13.5)*{}};
{\ar@{-}@[blue] (45,15)*{};(40,17.5)*{}};
{\ar@{-}@[blue] (45,20)*{};(40,22.5)*{}};
{\ar@{-}@[blue] (45,25)*{};(40,27.5)*{}};
{\ar@{-}@[blue] (45,30)*{};(40,32.5)*{}};
{\ar@{-}@[blue] (45,40)*{};(40,42.5)*{}};
{\ar@{-} (45,50)*{};(40,50)*{}};
%
%
{\ar@{-}@[blue] (50,12.5)*{};(45,15)*{}};
{\ar@{-}@[blue] (50,17.5)*{};(45,20)*{}};
{\ar@{-}@[blue] (50,22.5)*{};(45,25)*{}};
{\ar@{-}@[blue] (50,27.5)*{};(45,30)*{}};
{\ar@{-}@[blue] (50,37.5)*{};(45,40)*{}};
{\ar@{--}@[purple] (50,40)*{};(45,40)*{}};
{\ar@{-} (50,50)*{};(45,50)*{}};
%
%
{\ar@{-}@[red] (27,10)*{};(5,15)*{}};
{\ar@{-}@[red] (25,15)*{};(5,20)*{}};
{\ar@{-}@[red] (25,20)*{};(5,25)*{}};
{\ar@{-}@[red] (25,25)*{};(5,30)*{}};
{\ar@{-}@[red] (25,30)*{};(15,32.5)*{}};
%
%
{\ar@{-}@[red] (30,11.25)*{};(15,15)*{}};
{\ar@{-}@[red] (30,16.25)*{};(15,20)*{}};
{\ar@{-}@[red] (30,21.25)*{};(15,25)*{}};
{\ar@{-}@[red] (30,26.25)*{};(15,30)*{}};
{\ar@{-}@[red] (30,31.25)*{};(25,32.5)*{}};
%
%
{\ar@{-}@[red] (30,13.75)*{};(25,15)*{}};
{\ar@{-}@[red] (30,18.75)*{};(25,20)*{}};
{\ar@{-}@[red] (30,23.75)*{};(25,25)*{}};
{\ar@{-}@[red] (30,28.75)*{};(25,30)*{}};
%
%
{\ar@{-}@[red] (47,10)*{};(40,11.75)*{}};
{\ar@{-}@[red](45,15)*{};(40,16.25)*{}};
{\ar@{-}@[red] (45,20)*{};(40,21.25)*{}};
{\ar@{-}@[red] (45,25)*{};(40,26.25)*{}};
{\ar@{-}@[red] (45,30)*{};(40,31.25)*{}};
{\ar@{-}@[red] (45,40)*{};(40,41.25)*{}};
%
%
{\ar@{-}@[red] (50,13.75)*{};(45,15)*{}};
{\ar@{-}@[red] (50,18.75)*{};(45,20)*{}};
{\ar@{-}@[red] (50,23.75)*{};(45,25)*{}};
{\ar@{-}@[red] (50,28.75)*{};(45,30)*{}};
{\ar@{-}@[red] (50,38.75)*{};(45,40)*{}};
%
{\ar@{-}@[gray] (67,10)*{};(60,10)*{}};
{\ar@{-}@[gray] (65,15)*{};(60,15)*{}};
{\ar@{-}@[gray] (65,20)*{};(60,20)*{}};
{\ar@{-}@[gray] (65,25)*{};(60,25)*{}};
{\ar@{-}@[gray] (65,30)*{};(60,30)*{}};
{\ar@{-}@[gray] (65,50)*{};(60,50)*{}};
{\ar@{--}@[gray] (60,10)*{};(47,10)*{}};
{\ar@{--}@[gray] (60,15)*{};(45,15)*{}};
{\ar@{--}@[gray] (60,20)*{};(45,20)*{}};
{\ar@{--}@[gray] (60,25)*{};(45,25)*{}};
{\ar@{--}@[gray] (60,30)*{};(45,30)*{}};
{\ar@{--}@[gray] (60,50)*{};(45,50)*{}};
%
%
(7,10)*{\bullet};
(5,15)*{\bullet};
(5,20)*{\bullet};
(5,25)*{\bullet};
(5,30)*{\bullet};
(5,40)*{\vdots};
(5,50)*{\bullet};
%
%
(15,5)*{\scriptstyle C_{pm}};
(17,10)*{\bullet};
(15,15)*{\bullet};
(15,20)*{\bullet};
(15,25)*{\bullet};
(15,30)*{\bullet};
(15,40)*{\vdots};
(15,50)*{\bullet};
%
%
(25,5)*{\scriptstyle C_{p^2m}};
(27,10)*{\bullet};
(25,15)*{\bullet};
(25,20)*{\bullet};
(25,25)*{\bullet};
(25,30)*{\bullet};
(25,40)*{\vdots};
(25,50)*{\bullet};
%
%
(35,10)*{\hdots};
(35,15)*{\hdots};
(35,20)*{\hdots};
(35,25)*{\hdots};
(35,30)*{\hdots};
(35,40)*{\vdots};
(35,50)*{\hdots};
%
%
(35,5)*{\hdots};
%
%
(45,5)*{\scriptstyle C_{p^km}};
(47,10)*{\bullet};
(45,15)*{\bullet};
(45,20)*{\bullet};
(45,25)*{\bullet};
(45,30)*{\bullet};
(45,40)*{\bullet};
(45,50)*{\bullet};
%
%
(55,5)*{\hdots};
%
%
(65,5)*{\scriptstyle \T};
(67,10)*{\bullet};
(65,15)*{\bullet};
(65,20)*{\bullet};
(65,25)*{\bullet};
(65,30)*{\bullet};
(65,40)*{\vdots};
(65,50)*{\bullet};
%
%
(-10,10)*{\scriptstyle P_{\T}(-,1)};
(-10,15)*{\scriptstyle P_{\T}(-,2)};
(-10,20)*{\scriptstyle P_{\T}(-,3)};
(-10,25)*{\scriptstyle P_{\T}(-,4)};
(-10,30)*{\scriptstyle P_{\T}(-,5)};
(-10,40)*{\vdots};
(-10,50)*{\scriptstyle P_{\T}(-,\infty)};
%
(-10,5)*{\scriptstyle (p\nmid m)};
\endxy}}
\]
\caption{An illustration of the $p$-local Balmer spectrum for the circle group~$\T$. We refer to \Cref{fig:spccircle} for an interpretation of this picture.}
\label{fig:spccircleintro}
\end{figure}

For general compact Lie groups $G$, we get a complete answer for all but finitely many $p$, namely those $p$ appearing as divisors of the order of Weyl groups $|W_GH|$, where $H$ ranges through all closed subgroups with finite Weyl group in $G$ (that this number is indeed finite is a theorem of tom Dieck \cite[Thm.~1]{tD77}). This is described in \Cref{thm:order}. Finally, if one is willing to study the question simultaneously for all compact Lie groups, it is possible to reduce it to the case where $G$ is an extension of a finite $p$-group by an abelian compact Lie group, see \Cref{sec:reduce}.

\begin{remark} The Balmer spectrum $\Spc(\mathcal{SH}_{G,\Q}^c)$ of
  the homotopy category of finite rational $G$-spectra was determined for
  all compact Lie groups $G$ in \cite{Gre17},  but this also follows from 
  \Cref{thm:inclusions} (poset structure) and
  \Cref{cor:topbasis} (topology).  
\end{remark}

\subsection*{Acknowledgements}
We would like to thank the two anonymous referees for helpful comments on an earlier version of this paper.

\section{Recollections}
Let us begin by quickly recalling the proof of the classical thick subcategory theorem as well as collecting some basic notions from stable equivariant homotopy theory that will be used throughout the paper. 

As explained in Section 6 of \cite{BS17b} for finite groups, the computation of $\Spc(\SH_{G}^c)$ for a compact Lie group $G$ may be separated into that of $\Spc(\SH_{G,(p)}^c)$ for all primes $p$. Throughout the paper, we will therefore work in the $p$-local homotopy category for a fixed prime $p$, and all our constructions are implicitly $p$-local.

\subsection{The classical thick subcategory theorem}\label{ssec:classicaltst}

As the prototypical example, we quickly recall the key steps in the computation of the Balmer spectrum of the homotopy category of finite non-equivariant $p$-local spectra $\mathcal{SH}^c_{(p)}$, following Hopkins and Smith \cite{HS98}. 

For $n\in \N_{>0}$, let $K(n)$ denote the $n$-th Morava $K$-theory spectrum at $p$, with coefficients $K(n)_*=\mathbb{F}_p[v_n^{\pm 1}]$ and $|v_n|=2(p^n-1)$. Furthermore, $K(0)$ is defined to be the rational Eilenberg--Mac Lane spectrum $H\Q$.   For $n>0$ we write
\[ 
P(n)=\{ X\in \mathcal{SH}^c_{(p)}\mid  K(n-1)_*(X)=0\}
\]
and $P(\infty)=\bigcap_{n\in \N}P(n)$ for the set of all contractible finite $p$-local spectra. Every $P(n)$ is a prime ideal, since the $K(n)$ satisfy a K{\"u}nneth formula. Moreover, note that $\mathcal{SH}^c_{(p)}$ is generated by the unit of the smash product, the $p$-local sphere spectrum, so any thick subcategory is automatically a tensor ideal. The thick subcategory theorem~\cite[Thm.~7]{HS98} for $\mathcal{SH}^c_{(p)}$ provides a complete computation of the Balmer spectrum $\Spc(\mathcal{SH}^c_{(p)})$:

\begin{theorem}[Hopkins--Smith]\label{thm:classicaltst}
If $\cC \subseteq \mathcal{SH}^c_{(p)}$ is a proper thick subcategory, then there exists a unique $n \in \NN_{>0}$ such that $\cC = P(n)$. Moreover, there are proper inclusions 
\[ 
P(1)\supset P(2) \supset \hdots \supset P(\infty)=\{0\}. 
\]
\end{theorem}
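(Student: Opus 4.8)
The plan is to obtain everything from the two foundational results of chromatic homotopy theory: the nilpotence theorem of Devinatz--Hopkins--Smith and the periodicity theorem of Hopkins--Smith \cite{HS98}. Granting these, the rest is essentially bookkeeping, and I expect the nilpotence theorem (and the periodicity theorem it underlies) to be the only genuine obstacle.

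First I would record the two consequences of the nilpotence theorem that get used below: (i) a self-map $f\colon\Sigma^dZ\to Z$ of a finite $p$-local spectrum is nilpotent already when $\MU_*(f)$ is nilpotent, in particular when $f$ is trivial on $K(n)$-homology for every $n$; and (ii) for a nonzero finite $p$-local $X$ the vanishing of Morava $K$-theory propagates downwards, so that $K(n)_*(X)=0$ forces $K(m)_*(X)=0$ for all $m<n$. Fact (ii) makes $\mathrm{type}(X):=\min\{n\mid K(n)_*(X)\ne0\}$ well defined, shows $K(m)_*(X)\ne0$ exactly for $m\ge\mathrm{type}(X)$, and identifies $P(n)=\{X\mid\mathrm{type}(X)\ge n\}$. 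As the $K(n)$ satisfy a K\"unneth isomorphism each $P(n)$ is thick, and as the unit generates $\SH^c_{(p)}$ it is automatically a $\otimes$-ideal; in particular $\mathrm{type}(X\wedge Y)=\max(\mathrm{type}(X),\mathrm{type}(Y))$.

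Next I would invoke the periodicity theorem: every finite $p$-local $X$ of finite type $n$ admits a \emph{$v_n$-self map} $f\colon\Sigma^dX\to X$, an isomorphism on $K(n)_*$ and zero on $K(m)_*$ for $m\ne n$, and any two such agree after passing to iterates. Its cofibre $X/f$ has type $\ge n+1$, with $K(m)_*(X/f)\cong K(m)_*(X)\oplus\Sigma^{d+1}K(m)_*(X)$ for $m\ne n$, so iterating the construction from any $X$ of type $\le n$ produces, inside $\tti{X}$, a finite spectrum of type exactly $n$ for every $n$. I would then run the classification itself. Let $\cC\subseteq\SH^c_{(p)}$ be proper and thick (hence automatically a $\otimes$-ideal). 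If $\cC=\{0\}$ then $\cC=P(\infty)$; otherwise put $n:=\min\{\mathrm{type}(X)\mid 0\ne X\in\cC\}$, which is $\ge1$ since the $n=0$ case of the argument below would otherwise force $\cC=\SH^c_{(p)}$. The inclusion $\cC\subseteq P(n)$ is immediate from the choice of $n$. For $P(n)\subseteq\cC$ I would fix $X_0\in\cC$ of type $n$ and an arbitrary $Y\in P(n)$, of type $m\ge n$; since $\cC$ is a thick $\otimes$-ideal containing $X_0$ it suffices to see $Y\in\tti{X_0}$. Replacing $X_0$ by a type-$m$ spectrum of $\tti{X_0}$ (the cofibre iteration above) reduces to $\mathrm{type}(X_0)=\mathrm{type}(Y)=m$. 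Now $X_0\wedge Y$ lies in $\tti{X_0}$ and in $\tti{Y}$ and has type $m$, and the crux — the technical heart of \cite{HS98} — is to show $\tti{X_0\wedge Y}=\tti{Y}$ (and symmetrically $=\tti{X_0}$): one transports $v_m$-self maps across the smash product, so that $f\wedge\id_Y$ and $\id_{X_0}\wedge g$ (from $v_m$-self maps $f$ of $X_0$ and $g$ of $Y$) are commuting $v_m$-self maps of $X_0\wedge Y$, hence by asymptotic uniqueness become equal after iteration, and this coincidence is what lets one rebuild $Y$ from $X_0\wedge Y$ by finitely many cofibres and retracts. This step, resting on nilpotence and periodicity, is where the real work sits. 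It yields $\cC=P(n)$, and since the type-$n$ complexes constructed above lie in $P(n)$ but not in $P(n+1)$, the $P(n)$ are pairwise distinct, giving uniqueness of $n$.

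Finally I would close out the chain. The inclusions $P(n+1)\subseteq P(n)$ are precisely the downward propagation (ii), and they are proper because $P(n)$ contains a type-$n$ complex that $P(n+1)$ does not. For $P(\infty)=\bigcap_nP(n)=\{0\}$ it remains to show that every nonzero finite $p$-local $X$ has finite type; here I would run the Atiyah--Hirzebruch spectral sequence computing $K(n)_*(X)$ from $H_*(X;K(n)_*)$ and observe that, as soon as $2p^n-1$ exceeds the range of dimensions carrying cells of $X$, there is no room for differentials, so $K(n)_*(X)\cong H_*(X;\F_p)\otimes_{\F_p}K(n)_*\ne0$. Hence $\mathrm{type}(X)<\infty$ and $X\notin P(\infty)$, which completes the proof.
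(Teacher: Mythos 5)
There is a genuine gap, and it sits exactly at the two places where your outline deviates from the argument the paper is recalling. First, your plan is circular: you invoke the periodicity theorem both to manufacture type-$m$ spectra inside $\tti{X_0}$ for an \emph{arbitrary} type-$n$ spectrum $X_0$ and to run the ``crux'' step, but the Hopkins--Smith proof of the periodicity theorem (\cite[Thm.~9]{HS98}) itself relies on the thick subcategory theorem: one shows the class of finite spectra admitting a $v_n$-self map is thick, constructs a single example via Smith's construction, and then uses the thick subcategory theorem to conclude that every type-$n$ spectrum has such a self map. So you cannot assume a $v_m$-self map on an arbitrary $X_0\in\cC$ while proving the classification. (For the properness of the inclusions $P(n)\supset P(n+1)$ the paper instead quotes Mitchell's explicit complexes $F(n)$, which require no periodicity input; and the downward propagation $K(n)_*(X)=0\Rightarrow K(n-1)_*(X)=0$ is Ravenel's theorem \cite[Thm.~2.11]{Rav84}, not a consequence of nilpotence.)

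Second, the step you label the crux --- ``$f\wedge\id_Y$ and $\id_{X_0}\wedge g$ agree after iteration, and this coincidence lets one rebuild $Y$ from $X_0\wedge Y$ by finitely many cofibres and retracts'' --- is an assertion, not an argument: agreement of two self-maps of $X_0\wedge Y$ gives no mechanism for exhibiting $Y$ in $\tti{X_0\wedge Y}$, and this containment is precisely the content of the theorem. The actual mechanism, which is what the paper abstracts in \Cref{prop:abstractsupport} and uses in \Cref{ex:moravak}, is different: one applies the nilpotence theorem in its ring/smash-nilpotence form (equivalently, one considers the fibre of the coevaluation $\mathbb{S}\to X_0\wedge DX_0$, checks its $K(i)$-homology vanishes against $Y$ for all $i$, and concludes that a high smash power of the comparison map is null), so that $Y$ is built from $X_0\wedge(-)$ by finitely many cofibres and retracts; in the paper's formulation this is the statement that a monoid with empty $K$-theoretic support, such as $L^f_{\cD}(X_0\wedge DX_0)$, is zero. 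Replacing your crux by this support-theoretic argument, and replacing the periodicity input by Ravenel's theorem plus Mitchell's type-$n$ complexes, yields the proof the paper intends; as written, your outline does not close.
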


The proof can be divided into three steps. 

\begin{enumerate}
	\item Firstly, Ravenel \cite[Thm.~2.11]{Rav84} showed that, if $n>0$ and $X$ is a finite spectrum for which $K(n)_*(X)=0$, then also $K(n-1)_*(X)=0$. This translates into the inclusion $P(n+1) \subseteq P(n)$. Given a finite spectrum $X$, there hence exists a maximal $n$ such that $X\in P(n)$. This $n$ is called the \emph{type of $X$}. If $X\notin P(1)$, or in other words if the rational homology of $X$ is non-trivial, $X$ is said to be of type $0$. 
	\item Secondly, these inclusions are all proper, as was first shown by Mitchell \cite{Mit85}, by constructing a finite spectrum $F(n)$ with $K(n)_*(F(n)) \neq 0$ and $K(n-1)_*(F(n)) = 0$.
	\item Thirdly, as an application of the nilpotence theorem, Hopkins and Smith showed that if another finite spectrum $Y$ has type larger than or equal to the type of $X$, then $Y$ already lies in the tt-ideal $\tti{X}$ generated by $X$. As a consequence, any finite spectrum of type $n$ generates $P(n)$. It follows that the $P(n)$ make up all the proper thick tensor ideals and in particular all the prime ideals in $\mathcal{SH}^c_{(p)}$. Moreover, the topology on $\Spc(\mathcal{SH}^c_{(p)})$ is determined by the closure of points, which are given by $\overline{\{P(n)\}} =  \{P(n),P(n+1),\hdots,P(\infty) \}$.
\end{enumerate}

These steps are also manifest in the computation of the Balmer spectrum of the stable equivariant homotopy category. The goal of the next section is to establish an analogue of (3) for all compact Lie groups $G$ as well as an equivariant generalization of the nilpotence theorem. In contrast, Steps (1) and (2) turn out to be more subtle in the equivariant context, and so far only partial generalizations are known; this will be the subject of the remainder of the paper.

\subsection{Stable equivariant homotopy theory} \label{sec:eqhomotopy}
We recall some notions and results from equivariant stable homotopy theory for a compact Lie group $G$ that we will need throughout the paper. Classical references in the Lewis--May approach are \cite{LMSM86, GM95a,may_equivariant96}, a treatment of $G$-orthogonal spectra is given in \cite{MM02}, and \cite[Secs. 5 and 6]{MNN17} contains a discussion of some of the constructions in an $\infty$-categorical setting. 

We denote by $\SH_{G,(p)}$ the homotopy category of $p$-local $G$-spectra for a compact Lie group~$G$ indexed on a complete universe and write $\Sub(G)$ for the set of closed subgroups of $G$. For $H \in \Sub(G)$, the suspension spectrum $G/H_+ = \Sigma^{\infty}G/H_+$ is a $G$-spectrum, and the $H$-homotopy groups of $X \in \SH_{G,(p)}$ are defined as
\[
\pi_*^H(X) = [G/H_+,X]_*^G,
\]
i.e., the graded maps in $\SH_{G,(p)}$ from the suspension spectrum of $G/H_+$ into $X$.
A map of $G$-spectra is an equivalence if and only if it induces an isomorphism on $\pi_*^H$ for all closed subgroups~$H$ of $G$.

The category $\SH_{G,(p)} = (\SH_{G,(p)},\wedge,\mathbb{S}_G)$ is a tensor triangulated category which is compactly generated by the set of dualizable objects $\{G/H_+\}_{H \in \Sub(G)}$ and with compact unit given by the $G$-equivariant sphere spectrum $\mathbb{S}_G=G/G_+$. For every real $G$-representation $V$, the suspension spectrum of the one-point compactification $S^V$ is an invertible object in $\SH_{G,(p)}$. As usual, we denote the full subcategory of $\SH_{G,(p)}$ spanned by the compact objects by $\SH_{G,(p)}^c$. The objects of $\SH_{G,(p)}^c$, which we call the \emph{finite $G$-spectra}, can be described in the following way:

\begin{lemma} \label{lem:finiteg}
Every finite $p$-local $G$-spectrum $X$ is of the form $S^{-V}\wedge \Sigma^{\infty}A$, where $V$ is a $G$-representation and $A$ is a homotopy retract of the $p$-localization of a based finite $G$-CW complex.
\end{lemma}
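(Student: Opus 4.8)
The plan is to realise $\SH_{G,(p)}^c$ as the thick subcategory of $\SH_{G,(p)}$ generated by the orbits and then to check that the objects described in the statement form a thick subcategory containing those generators. Since $\SH_{G,(p)}$ is compactly generated by the compact objects $\Sigma^\infty G/H_+$, $H\in\Sub(G)$, the subcategory of compact objects $\SH_{G,(p)}^c$ coincides with the thick subcategory $\langle\Sigma^\infty G/H_+\mid H\in\Sub(G)\rangle$ that they generate (see e.g.\ \cite[Thm.~2.3.2]{HPS97}). Let $\cC\subseteq\SH_{G,(p)}$ be the full subcategory of objects equivalent to $S^{-V}\wedge A$ with $V$ a $G$-representation and $A$ a homotopy retract of $\Sigma^\infty Z$ for some based finite $G$-CW complex $Z$; as $\Sigma^\infty Z\simeq\Sigma^\infty(Z_{(p)})$ in $\SH_{G,(p)}$, this is the class in the statement. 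Each object of $\cC$ is compact, since $S^{-V}$ is invertible and retracts of compact objects are compact, so $\cC\subseteq\SH_{G,(p)}^c$, and it remains to prove the reverse inclusion; by the above it suffices to show that $\cC$ contains the generators and is closed under suspension, desuspension, finite wedges, retracts and cofibres. Containment of the generators is clear ($V=0$, $Z=G/H_+$); closure under suspension and finite wedges follows from $S^1\wedge(S^{-V}\wedge A)\simeq S^{-V}\wedge(S^1\wedge A)$ with $S^1\wedge A$ a retract of $\Sigma^\infty(S^1\wedge Z)$, and from $\bigvee_i S^{-V}\wedge A_i\simeq S^{-V}\wedge\bigvee_i A_i$ after passing to a common desuspension $V=\bigoplus_i V_i$; desuspension uses $S^{-\R}\wedge(S^{-V}\wedge A)\simeq S^{-(V\oplus\R)}\wedge A$; and closure under retracts is immediate because a homotopy retract of a homotopy retract of $\Sigma^\infty Z$ is again one and $S^{-V}\wedge(-)$ is an equivalence.

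The substance is closure under cofibres. Let $f\colon M\to N$ be a map in $\cC$, say $M\simeq S^{-V}\wedge A$ and $N\simeq S^{-W}\wedge B$ with $A$, $B$ homotopy retracts of $\Sigma^\infty Z_A$, $\Sigma^\infty Z_B$ respectively. Replacing both $V$ and $W$ by $V\oplus W$ — using $S^{-V}\wedge A\simeq S^{-(V\oplus W)}\wedge(S^W\wedge A)$ and similarly for $B$, with $S^W\wedge Z_A$, $S^W\wedge Z_B$ still finite $G$-CW — we may assume $V=W$, and invertibility of $S^{-V}$ gives $\cone(f)\simeq S^{-V}\wedge\cone(\bar f)$ for a map $\bar f\colon A\to B$. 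Fix splittings $\Sigma^\infty Z_A\simeq A\vee A'$ and $\Sigma^\infty Z_B\simeq B\vee B'$ exhibiting $A$ and $B$ as homotopy retracts, and let $g\colon\Sigma^\infty Z_A\to\Sigma^\infty Z_B$ be $\bar f$ extended by zero. Then $\cone(g)\simeq\cone(\bar f)\vee\Sigma A'\vee B'$, so $\cone(\bar f)$ is a retract of $\cone(g)$; since $\cC$ is closed under retracts and under $S^{-V}\wedge(-)$, it is enough to prove that $\cone(g)\in\cC$. We are thus reduced to a map $g$ between suspension spectra of honest based finite $G$-CW complexes.

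For such $g$, compactness of $\Sigma^\infty Z_A$ in $\SH_G$ shows that, in $\SH_{G,(p)}$, $g$ factors as $\Sigma^\infty Z_A\xrightarrow{g_0}\Sigma^\infty Z_B\xrightarrow{\ \cdot\frac{1}{k}\ }\Sigma^\infty Z_B$ for an integral stable map $g_0$ and an integer $k$ coprime to $p$; as multiplication by $k$ is an equivalence $p$-locally, $\cone(g)\simeq\cone(g_0)$. By the equivariant Freudenthal suspension theorem, after smashing with a sufficiently large representation sphere $S^U$ the map $g_0$ is represented by a based $G$-map $h\colon S^U\wedge Z_A\to S^U\wedge Z_B$, which we may take to be cellular by equivariant cellular approximation. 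Then $\cone(h)=(S^U\wedge Z_B)\cup_h C(S^U\wedge Z_A)$ is a based finite $G$-CW complex and $\cone(g_0)\simeq S^{-U}\wedge\Sigma^\infty\cone(h)\in\cC$. Hence $\cone(g)\in\cC$, which finishes the verification and the proof.

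I expect the last step to be the main obstacle: passing from an abstract $p$-local stable map to a cellular $G$-map between finite complexes. Clearing the prime-to-$p$ denominator is painless, since the offending self-map of $\Sigma^\infty Z_B$ becomes an equivalence after $p$-localisation and so leaves the cone unchanged, but one really does need the equivariant Freudenthal and cellular approximation theorems to see that the resulting integral stable map is, up to desuspension by a representation sphere, the suspension spectrum of a cellular map whose mapping cone is again a finite $G$-CW complex. It is also worth noting that a homotopy retract of a suspension spectrum need not itself be a suspension spectrum, which is why $\cC$ — and the statement — must allow homotopy retracts rather than only honest finite $G$-CW complexes. (The integral analogue, that the compact objects of $\SH_G$ are the retracts of objects $S^{-V}\wedge\Sigma^\infty A$, is classical and treated in the Lewis--May literature; the argument above is that one, performed $p$-locally.)
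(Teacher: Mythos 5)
There is a genuine gap, and it lies in the sentence identifying your class $\cC$ with the class in the statement. In the lemma, $A$ is a \emph{$G$-space}: it is a homotopy retract, in the homotopy category of based $G$-spaces, of the $p$-localization $Z_{(p)}$ of a finite $G$-CW complex, and $X$ must be equivalent to $S^{-V}\wedge\Sigma^{\infty}A$ for that space. (This space-level structure is exactly what the lemma is used for later: \Cref{prop:isotropylocconstant} and the proof of \Cref{lem:induced} argue with the fixed points $A^H$ and the cell structure of the ambient complex.) Your $\cC$ instead consists of $S^{-V}\wedge A$ with $A$ a \emph{spectrum-level} retract of $\Sigma^{\infty}Z$ in $\SH_{G,(p)}$, and the parenthetical justification ($\Sigma^{\infty}Z\simeq\Sigma^{\infty}(Z_{(p)})$ $p$-locally) only addresses the localization, not the retract. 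A spectrum-level retract of $\Sigma^{\infty}Z_{(p)}$ is not obviously of the form $\Sigma^{\infty}A$ for a space-level retract $A$; converting one into the other is precisely the nontrivial content of the lemma, and your closing remark (``a homotopy retract of a suspension spectrum need not itself be a suspension spectrum'') shows the statement has been read with the weaker, spectrum-level meaning. So your argument proves that every compact object is a desuspension of a spectrum retract of $\Sigma^{\infty}Z$ — a weaker assertion — and if you instead define $\cC$ with space-level retracts, closure of $\cC$ under retracts of spectra becomes exactly the missing step, so the thick-subcategory strategy does not dodge it.

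The missing argument (which is how the paper proceeds, after first exhibiting $X$ as a retract of a finite stage $S^{-V}\wedge\Sigma^{\infty}B$ of the filtered colimit presentation of \cite[Prop. I.4.7]{LMSM86} rather than via the thick subcategory generated by orbits) is an unstable realization of the \emph{idempotent}, not just of a map: since $B$ is finite, $[\Sigma^{\infty}B,\Sigma^{\infty}B]^G_{(p)}=\colim_W [S^W\wedge B,S^W\wedge B]^G\otimes\Z_{(p)}$, so the stable idempotent $i$ exhibiting the retract is $j/n$ with $j$ an unstable self-map of $S^W\wedge B$ and $n$ prime to $p$; after enlarging $W$ one arranges $j^2=n\cdot j$ unstably, passes to the space-level $p$-localization $(S^W\wedge B)_{(p)}$ where $n$ is invertible to obtain an honest homotopy idempotent $\widetilde{j}$, and splits it by the mapping telescope $A=\tel(\widetilde{j})$, giving a space-level retract with $\Sigma^{\infty}A\simeq S^W\wedge S^{V}\wedge X$. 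You in fact deploy the same tools (the colimit description of stable maps, clearing a prime-to-$p$ denominator, cellular approximation) for your cofiber step, where they work fine, but they are needed for the retract/idempotent step, which your proof omits entirely.
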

Here, to avoid subleties with $p$-localization of $G$-spaces, the finite $G$-CW complex can always be chosen to be a double suspension, so all fixed points are simply-connected and the $p$-localization is given by the mapping telescope over repeated multiplication by $q$ for all primes $q\neq p$, using the comultiplication from the suspension coordinates. The lemma is (a $p$-local variation of) a classical fact about $G$-spectra and follows from every $G$-spectrum being a filtered homotopy colimit over $G$-spectra of the form $S^{-V}\wedge \Sigma^{\infty} (-)$, see \cite[Prop. I.4.7]{LMSM86}.

\subsubsection{Pullback along a group homomorphism}
Let $\varphi\colon G\to K$ be a continuous group homomorphism. Then every $K$-space $A$ gives rise to a $G$-space $\varphi^*(A)$ by taking the same underlying space and pulling back the $K$-action to a $G$-action along $\varphi$. There is a spectrum analog $\varphi^*\colon\SH_{K,(p)}\to \SH_{G,(p)}$, technically obtained as a composite of a space level pullback functor and a change of universe functor, see \cite[Secs.~II.1--4]{LMSM86} or \cite[Secs.~V.1--3]{MM02}. It is uniquely determined by the fact that it is exact, symmetric monoidal, preserves all coproducts and sends a suspension spectrum $\Sigma^{\infty} A$ to the suspension spectrum $\Sigma^{\infty}\varphi^*(A)$. In particular, $\varphi^*$ preserves dualizable objects and hence restricts to a functor $\varphi^*\colon\SH^c_{K,(p)}\to \SH^c_{G,(p)}$.

We need two special cases of this construction, called restriction and inflation:

\subsubsection{Restriction and induction}\label{sec:restriction}
In the case where $\varphi$ is the inclusion of a closed subgroup $H$ into $G$, the pullback functor is called \emph{restriction} and denoted $\res_H^G$. In that case it has a left adjoint called \emph{induction} that is denoted $\Ind_H^G$. When $A$ is a based $H$-space, there is a natural equivalence $\Ind_H^G(\Sigma^{\infty}A)\simeq \Sigma^{\infty} (G_+\wedge _H A)$, which follows from the analogous property of $\res_H^G$. Furthermore, for a pair of an $H$-spectrum $X$ and a $G$-spectrum $Y$, Frobenius reciprocity provides an equivalence
\[ 
\Ind_H^G(X\wedge \res_H^G(Y))\simeq \Ind_H^G(X)\wedge Y. 
\]
One way to see this is to note that adjunction gives a natural map from the left to the right, and it is enough to check that this map is an equivalence in the case where $X$ and $Y$ vary through the compact generators. Since these are suspension spectra and $\Ind_H^G, \res_H^G$, as well as~$\wedge$ commute with $\Sigma^{\infty}$, this reduces to the space level analog which is easily checked.

\subsubsection{Inflation} \label{sec:inflation}
When $\varphi$ is the projection $p\colon G\to G/N$ for a closed normal subgroup of $G$, the pullback functor is called \emph{inflation} and denoted $i_N^*$. Most important for us is the case $N=G$, in which case the inflation functor turns an ordinary spectrum into a $G$-spectrum with `trivial action'.

\subsubsection{Geometric fixed points} \label{sec:geometricfixed}  For every closed subgroup $H$ of $G$, there is a \emph{geometric fixed point functor }
\[ 
\Phi^H = \Phi_G^H \colon \SH_{G,(p)}\to \SH_{(p)},
\]
obtained by composing the restriction $\res_H^G\colon \SH_{G,(p)} \to \SH_{H,(p)}$ with the (absolute) geometric fixed point functor for $H$, see \cite[Sec.~II.9]{LMSM86}, \cite[Sec.~V.4]{MM02} or \cite[Sec.~6.2]{MNN17}. This functor is exact, symmetric monoidal, preserves all coproducts, and satisfies $\Phi^H(\Sigma^{\infty}A)\simeq \Sigma^{\infty}(A^H)$ for all based $G$-CW complexes $A$. Since the dualizable objects and the compact objects coincide in $\SH_{G,(p)}$, the geometric fixed point functor $\Phi^H$ preserves compact objects and hence restricts to a functor $\Phi^H\colon \SH_{G,(p)}^c \to \SH_{(p)}^c$. Every element $g\in G$ induces conjugation equivalences $\Phi^H\simeq \Phi^{H^g}$, where $H^g=g^{-1}Hg$. Hence, up to equivalence, the geometric fixed point functors only depend on the conjugacy class of $H$. Moreover, the collection $\{\Phi^H\}_{H \in \Sub(G)}$ is jointly conservative, i.e., a map $f$ in $\SH_{G,(p)}$ is an equivalence if and only if $\Phi^H(f)$ is an equivalence in $\SH_{(p)}$ for all $H\in\Sub(G)$ (see \cite[Prop. 3.3.10]{Sch18} for a proof). 

Finally, geometric fixed points have the following behaviour with respect to pullback along a continuous group homomorphism $\varphi\colon G\to K$. For a closed subgroup $H$ of $G$ there are natural equivalences
\begin{equation} \label{eq:inflation}
\Phi^H(\varphi^*(X))\simeq \Phi^{\varphi(H)}(X).
\end{equation}
These can be derived from their space-level analogs using that both sides are exact, symmetric monoidal, coproduct preserving functors in $X$.

\subsubsection{Borel completion}\label{sec:borelcompletion}
A $G$-spectrum $Y$ is called \emph{Borel-complete} if the map $Y\to F(EG_+,Y)$ is an equivalence, see \cite[Sec. 6.3]{MNN17} for a recent reference. Here, $F(-,-)$ denotes the function $G$-spectrum between two $G$-spectra. If the underlying non-equivariant spectrum of a Borel-complete $G$-spectrum $Y$ is contractible, then $Y$ is already contractible as a $G$-spectrum, which follows from an induction over the cells of $EG_+$. For every spectrum $Z$ equipped with a $G$-action in the naive sense (i.e., a module over the spherical group ring $\Sigma^{\infty} G_+$, or in $\infty$-categorical language a functor from $BG$ to the $\infty$-category of spectra) there exists a \emph{Borel spectrum} $\underline{Z}\in \mathcal{SH}_{G,(p)}$ uniquely determined by the property that it is Borel complete and that its underlying spectrum with naive $G$-action is equivalent to $Z$.

\section{Equivariant support, equivariant prime ideals and the equivariant nilpotence theorem}

In this section, we use the geometric fixed point functors and the non-equivariant nilpotence theorem to construct a support theory that allows us to detect inclusions between equivariant tt-ideals, thereby providing an equivariant generalization of Step (3) in the proof of \Cref{thm:classicaltst}. We then use these results to determine the underlying set of the Balmer spectrum of the category of finite $p$-local $G$-spectra. Although not needed in the remainder of the paper, we also prove an equivariant version of the nilpotence theorem for all compact Lie groups.

\subsection{An abstract thick tensor ideal theorem}

Let $\cC = (\cC,\otimes,1)$ be a closed symmetric monoidal triangulated category with arbitrary coproducts and compact unit $1$, which implies that every dualizable object in $\cC$ is compact. Following Hovey, Palmieri, and Strickland \cite{HPS97}, we say that $\cC$ is a unital algebraic stable homotopy category if it is compactly generated by a set of dualizable objects. 

Suppose given an indexing set $I$, a collection of graded abelian categories $(\cD_i)_{i \in I}$ with zero objects $0 \in \cD_i$ and a collection of stable homological functors $(\kappa_i\colon \cC \to \cD_i)_{i \in I}$ satisfying the following two conditions:
\begin{enumerate}
	\item Let $i \in I$ and $X,Y \in \cC$. Then  $\kappa_i(X\otimes Y) = 0$ if and only if $\kappa_i(X) = 0$ or $\kappa_i(Y)=0$. 
	\item If $R \in \Alg(\cC)$ is a monoid in $\cC$ with $\kappa_i(R) = 0$ for all $i \in I$, then $R = 0$. 
\end{enumerate}
We will refer to such a collection of functors as a \emph{coarse support theory} for $\cC$. The corresponding notion of \emph{support} is then defined for any $X \in \cC$ as 
\[
\supp(X) = \{i \in I\mid \kappa_i(X) \neq 0\}.
\]
This extends to any collection $\cX \subseteq \cC$ of objects in $\cC$ by setting $\supp(\cX) = \bigcup_{X \in \cX}\supp(X)$. Since the functors $\kappa_i$ are stable homological to graded abelian categories $\cD_i$, then $\supp(X) = \supp(\tti{X})$. 

\begin{remark}
Conditions (1) and (2) translate to the following two properties of the corresponding notion of support:
\begin{enumerate}
	\item[(1')] For any two objects $X,Y \in \cC$, we have $\supp(X \otimes Y) = \supp(X) \cap \supp(Y)$. 
	\item[(2')] A monoid $R \in \cC$ is trivial if and only if $\supp(R) = \varnothing$.
\end{enumerate}
A coarse support theory thus satisfies part of the defining conditions of a support datum as axiomatized by Balmer in \cite[Def.~3.1]{Bal05}, thereby justifying our choice of terminology.
\end{remark}

The next result is an abstract version of the support-theoretic part of the thick subcategory theorem; the proof is a modification of the argument in \cite[Pf.~of~Thm.~5.2.2]{HPS97} and thus essentially due to Hovey, Palmieri, and Strickland. 

\begin{prop}\label{prop:abstractsupport}
Suppose $(\kappa_i\colon \cC \to \cD_i)_{i\in I}$ is a coarse support theory on a unital algebraic stable homotopy category $\cC$ and let $X \in \cC^{c}$ be a compact object and $\cD \subseteq \cC^{c}$ a thick tensor ideal. If $\supp(X) \subseteq \supp(\cD)$, then $X \in \cD$. 
\end{prop}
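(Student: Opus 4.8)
The plan is to adapt the Hovey--Palmieri--Strickland argument for the classical thick subcategory theorem, replacing the use of Morava $K$-theories by the abstract coarse support theory $(\kappa_i)_{i\in I}$. The key device is to build, out of the thick tensor ideal $\cD$, an idempotent triangle in the Bousfield-localization sense and then show that $X$ must be killed by the acyclic part.

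\textbf{Step 1: Construct the localizing ideal and its idempotent triangle.}
First I would pass from the thick tensor ideal $\cD \subseteq \cC^c$ to the localizing tensor ideal $\overline{\cD} = \Loc^{\otimes}(\cD) \subseteq \cC$ that it generates (the smallest full triangulated subcategory closed under coproducts, retracts, and tensoring with arbitrary objects of $\cC$). Since $\cC$ is a unital algebraic stable homotopy category and $\cD$ is generated by a set of compact objects, standard Bousfield localization machinery (as in \cite[Ch.~3--4]{HPS97}) produces an idempotent triangle
\[
E_{\cD} \to 1 \to F_{\cD} \to \Sigma E_{\cD}
\]
in $\cC$, where $E_{\cD} \in \overline{\cD}$ (the acyclization) and $F_{\cD}$ is $\overline{\cD}$-local. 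Tensoring an arbitrary $Y \in \cC$ with this triangle gives a functorial triangle $E_{\cD}\otimes Y \to Y \to F_{\cD}\otimes Y$, and the key point is that $Y \in \overline{\cD}$ if and only if $F_{\cD}\otimes Y \simeq 0$, while $E_{\cD}\otimes Y \in \overline{\cD}$ always. In particular, since $\cD$ is thick in $\cC^c$, to show $X \in \cD$ it suffices to show $X \in \overline{\cD}$, i.e.\ that $F_{\cD}\otimes X \simeq 0$ --- here one uses that $\overline{\cD}\cap \cC^c = \cD$, which holds because $\cD$ is a thick tensor ideal generated by compacts (the usual Neeman-style argument via Bousfield localization and compactness of $1$).

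\textbf{Step 2: Identify the support of $F_{\cD}$ and apply condition (2).}
Next I would analyze $F_{\cD}$. Since $1$ is compact and $\cC$ is generated by dualizables, $F_{\cD}\wedge F_{\cD}\simeq F_{\cD}$ makes $F_{\cD}$ an idempotent, hence a (nonunital, but in fact unital up to the usual caveats) monoid in $\cC$; more precisely $F_{\cD}$ is a commutative monoid in $\cC$ via the localization. The claim is that $\supp(F_{\cD}) \subseteq I \setminus \supp(\cD)$: indeed for $D \in \cD$ we have $F_{\cD}\otimes D \simeq 0$ (as $D \in \overline{\cD}$), so by condition (1) $\kappa_i(F_{\cD}) = 0$ whenever $\kappa_i(D)\neq 0$, i.e.\ for every $i \in \supp(\cD)$. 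Combining this with the hypothesis $\supp(X)\subseteq\supp(\cD)$, condition (1) again gives $\kappa_i(F_{\cD}\otimes X) = 0$ for \emph{all} $i \in I$: if $i \in \supp(\cD)$ then $\kappa_i(F_{\cD}) = 0$; if $i \notin \supp(\cD)$ then $i\notin\supp(X)$ so $\kappa_i(X) = 0$. Now $F_{\cD}\otimes X$ is a module over the monoid $F_{\cD}$, but to invoke condition (2) directly I want a \emph{monoid} with vanishing support. The standard trick (cf.\ \cite[Pf.~of~Thm.~5.2.2]{HPS97}) is to observe that $F_{\cD}\otimes X$ is a retract of $F_{\cD}\otimes X\otimes DX$ where $DX$ is the Spanier--Whitehead dual of the dualizable object $X$ --- wait, more cleanly: $F_{\cD}\otimes F(X,X) \simeq F_{\cD}\otimes X\otimes DX$ is a monoid (an endomorphism object), it has vanishing support by condition (1) since $\kappa_i(F_{\cD}\otimes X) = 0$ for all $i$ forces $\kappa_i(F_{\cD}\otimes X \otimes DX) = 0$, and $\mathrm{id}_X$ exhibits $1 \to F(X,X)$, hence $F_{\cD} \to F_{\cD}\otimes F(X,X)$; by condition (2) applied to this monoid, $F_{\cD}\otimes F(X,X)\simeq 0$, and tensoring with $X$ and using the unit of the duality shows $F_{\cD}\otimes X$ is a retract of $0$, hence $F_{\cD}\otimes X \simeq 0$. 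Therefore $X \in \overline{\cD}$, and by Step~1, $X \in \cD$.

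\textbf{Main obstacle.}
The technical heart is Step~1: making the passage between the compactly-generated localizing ideal $\overline{\cD}$ and the thick ideal $\cD$ precise, and correctly handling the (non)unitality of $F_{\cD}$ as a monoid so that condition (2) applies. One must be careful that $F_{\cD}$ really is a monoid in $\Alg(\cC)$ in the sense intended --- this is where the hypothesis that $1$ is compact and $\cC$ is unital algebraic is essential, as it guarantees the localization $1 \to F_{\cD}$ is a smashing-type idempotent with a genuine multiplication. The rest is a formal manipulation of the support conditions (1) and (2), exactly parallel to the classical case where $\kappa_i$ runs over Morava $K$-theories; the only genuinely new input over \cite{HPS97} is the bookkeeping with the index set $I$ and the observation that $\supp(X)\subseteq\supp(\cD)$ is precisely what makes $F_{\cD}\otimes X$ have empty support.
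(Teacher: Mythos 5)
Your proposal is correct and follows essentially the same route as the paper: the finite (smashing) localization $L^f_{\cD}$ associated to $\Loc^{\otimes}(\cD)$, the identification $\Loc^{\otimes}(\cD)\cap\cC^c=\cD$ via Thomason/Neeman, the case analysis on $i\in\supp(\cD)$ to show $\kappa_i(F_{\cD}\otimes X)=0$ for all $i$, and condition (2) applied to a monoid built from $X\otimes DX$. The only cosmetic difference is that the paper replaces $X$ by the endomorphism monoid $X\otimes DX$ at the outset, whereas you form $F_{\cD}\otimes X\otimes DX$ at the end and recover $F_{\cD}\otimes X\simeq 0$ by the duality retract — the same argument in a slightly different order.
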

\begin{proof}
For a full subcategory $\cS \subseteq \cC$, let $\Loc^{\otimes}(\cS)$ be the smallest localizing ideal of $\cC$ containing $\cS$, i.e., the smallest triangulated subcategory of $\cC$ containing $\cS$ which is closed under tensor products with objects of $\cC$. The natural inclusion $\cD \to \cC$ then gives rise to a (Verdier) localization sequence
\[
\Loc^{\otimes}(\cD) \to \cC \to \cC/\Loc^{\otimes}(\cD).
\]
By \cite[Lem.~3.1.6(e) and Thm.~3.3.3]{HPS97}, the functor $\cC \to \cC/\Loc^{\otimes}(\cD)$ admits a right adjoint. The corresponding composite $L_{\cD}^f \colon \cC \to \cC/\Loc^{\otimes}(\cD) \to \cC$ is then a smashing localization functor on $\cC$, so there is a canonical isomorphism $L_{\cD}^fY  = Y \otimes L_{\cD}^f1$ for all $Y \in \cC$. Suppose $X$ satisfies the assumptions of the proposition, then the conclusion is equivalent to $L_{\cD}^fX = 0$, since $\Loc^{\otimes}(\cD) \cap \cC^c = \cD$ by Thomason's Theorem in the form of \cite[Thm.~7.1(ii)]{Gre17}.

Note that we may assume without loss of generality that $X \in \Alg(\cC)$: Indeed, since $X$ is a retract of $X \otimes DX \otimes X$ by dualizability, $X$ and the monoid $X \otimes DX$ have the same support. Moreover, the same observation shows that $X \in \cD$ if and only if $X \otimes DX \in \cD$.  

We now claim that $\kappa_i(L_{\cD}^fX) = 0$ for all $i \in I$. To this end, let $i \in I$ and assume that $\kappa_i(X) = 0$. It follows that $\kappa_i(L_{\cD}^f(X)) = \kappa_i(X \otimes L_{\cD}^f1) = 0$. If $i\in I$ is instead such that $\kappa_i(X) \neq 0$, then $i \in \supp(X) \subseteq \supp(\cD)$, so there exists $Y \in \cD$ with $\kappa_i(Y) \neq 0$. But $L_{\cD}^fY = 0$, hence $\kappa_i((X\otimes L_{\cD}^f1) \otimes Y) = 0$. Therefore $\kappa_i(X \otimes L_{\cD}^f1) = 0$ in this case as well. Consequently, $L_{\cD}^fX$ is a monoid in $\cC$ with $\kappa_i(L_{\cD}^fX) = 0$ for all $i\in I$, thus $L_{\cD}^fX = 0$. 
\end{proof}

We note that the proposition in particular implies that every thick tensor ideal $\cD$ in $\cC^c$ is determined by its support: It consists of all $X\in \cC^c$ such that $\supp(X)\subseteq \supp(\cD)$.

\begin{example}\label{ex:moravak}
Let $\SH_{(p)}$ be the $p$-local stable homotopy category at a prime $p$. For any $n \in \NN=\{0,1,\hdots\}\cup\{\infty\}$, write $K(n)$ for the $p$-local height $n$ Morava $K$-theory spectrum, where we set $K(\infty) = H\F_p$. The nilpotence theorem of Devinatz, Hopkins, and Smith \cite{DHS88,HS98} implies that the collection of homological and symmetric monoidal functors 
\[
\{K(n)_*\colon \SH_{(p)} \to \Mod_{K(n)_*}\}_{n \in \NN}
\]
forms a coarse support theory for $\SH_{(p)}$. In this case, \Cref{prop:abstractsupport} recovers the thick subcategory theorem \cite[Thm.~7]{HS98} of Hopkins and Smith, as explained in \Cref{ssec:classicaltst}.
\end{example}

\subsection{Equivariant support and type functions of finite $G$-spectra}

In order to apply the abstract result of the previous subsection to equivariant stable homotopy theory for a compact Lie group $G$, we need to construct an appropriate coarse support theory. Fix a prime $p$ and write $\SH_{G,(p)}$ for the category of $p$-local $G$-spectra. Recall from \Cref{ex:moravak} that for each $n \in \NN$ there is a Morava $K$-theory spectrum $K(n) \in \SH_{(p)}$ of height $n$.

\begin{definition}
Let $I = \Sub(G) \times \NN$ and define for each $(H,n) \in I$ functors 
\[
\kappa_{(H,n)} = K(n)_*(\Phi^H(-))\colon \SH_{G,(p)} \to \Mod_{K(n)_*}.
\]
As in the previous subsection, we shall write $\supp(X)$ for the support of $X \in \SH_{G,(p)}$ corresponding to the collection of functors $\{\kappa_{(H,n)}\}_{(H,n) \in I}$.
\end{definition}

As before, this notion of support extends to any collection $\cX \subseteq \SH_{G,(p)}$. Since each $\Phi^H$ is exact as well as symmetric monoidal and $K(n)$ has a K{\"u}nneth isomorphism, we have $\supp(X) = \supp(\tti{X})$ for all $X \in \SH_{G, (p)}$. Then we have the following result, which was also obtained in unpublished work of Strickland:

\begin{prop}\label{prop:eqsupport}
The collection $\{\kappa_{(H,n)}\}_{ (H,n)\in \Sub(G) \times \NN}$ is a coarse support theory for $\SH_{G, (p)}$.
\end{prop}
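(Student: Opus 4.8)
The plan is to verify the two defining conditions of a coarse support theory, namely condition (1) (the Künneth-type property for each individual functor $\kappa_{(H,n)}$) and condition (2) (joint conservativity on monoids). Condition (1) is essentially formal given the properties of geometric fixed points recalled in \Cref{sec:geometricfixed} together with \Cref{ex:moravak}: for fixed $(H,n)$, we have $\kappa_{(H,n)}(X\otimes Y)=K(n)_*(\Phi^H(X\wedge Y))\cong K(n)_*(\Phi^H(X)\wedge\Phi^H(Y))$ since $\Phi^H$ is symmetric monoidal, and then the Künneth isomorphism for $K(n)$ (using that $K(n)_*$ is a graded field) identifies this with $K(n)_*(\Phi^H(X))\otimes_{K(n)_*}K(n)_*(\Phi^H(Y))$. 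A tensor product of graded $K(n)_*$-modules vanishes if and only if one of the factors does, which is exactly condition (1). Strictly speaking one should note that $\Phi^H$ preserves compactness and that the Künneth formula applies on all of $\SH_{(p)}$, not just compact objects; since $\Phi^H$ preserves arbitrary coproducts and $K(n)$ is a field spectrum, the Künneth isomorphism holds in general, so no compactness hypothesis is needed here.

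The substance of the proof is condition (2): if $R\in\Alg(\SH_{G,(p)})$ is a monoid with $\kappa_{(H,n)}(R)=0$ for all $H\in\Sub(G)$ and all $n\in\NN$, then $R=0$. First I would fix a closed subgroup $H$ and consider the (non-equivariant) ring spectrum $\Phi^H(R)\in\Alg(\SH_{(p)})$; the hypothesis says $K(n)_*(\Phi^H(R))=0$ for all $n\in\NN$ (including $n=\infty$, i.e.\ $H\F_p$). By the non-equivariant statement — concretely, \Cref{ex:moravak}, which packages the Devinatz--Hopkins--Smith nilpotence theorem as a coarse support theory on $\SH_{(p)}$ whose condition (2) is precisely that a $p$-local ring spectrum killed by all the $K(n)$ is trivial — we conclude $\Phi^H(R)\simeq 0$. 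Since this holds for every closed subgroup $H$ of $G$, and the family of geometric fixed point functors $\{\Phi^H\}_{H\in\Sub(G)}$ is jointly conservative on $\SH_{G,(p)}$ (as recalled in \Cref{sec:geometricfixed}, citing \cite[Prop.~3.3.10]{Sch18}), applying conservativity to the map $0\to R$ gives $R\simeq 0$.

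The one point requiring a little care — and the only place the argument is not purely formal — is that the non-equivariant condition (2) from \Cref{ex:moravak} is stated for $p$-local ring spectra, so one must know $\Phi^H(R)$ genuinely lands in $\Alg(\SH_{(p)})$; this is immediate because $\Phi^H$ is (lax, indeed strong) symmetric monoidal and hence carries monoids to monoids. I also want to emphasize that condition (2) does \emph{not} require $R$ to be compact, which is why it is essential that \Cref{ex:moravak} and the joint conservativity statement both hold on the full category $\SH_{(p)}$ (resp.\ $\SH_{G,(p)}$) rather than only on compact objects; the nilpotence theorem in the form of \cite{HS98,DHS88} does apply in this generality. Assembling these pieces — Künneth for condition (1), and (non-equivariant nilpotence) $+$ (joint conservativity of geometric fixed points) for condition (2) — completes the proof.
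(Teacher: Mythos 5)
Your argument is correct and is essentially the paper's own proof: condition (1) via the symmetric monoidality of $\Phi^H$ and the K\"unneth isomorphism for $K(n)$ (using that $K(n)_*$ is a graded field), and condition (2) by applying the non-equivariant nilpotence theorem of Devinatz--Hopkins--Smith to each $\Phi^H(R)$ and then invoking joint conservativity of the geometric fixed point functors. The only cosmetic difference is that you route the nilpotence input through \Cref{ex:moravak} while the paper cites \cite[Thm.~3~i)]{HS98} directly; your added remarks on non-compact objects and on $\Phi^H$ preserving monoids are correct but not needed beyond what the paper already records.
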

\begin{proof}
We have to verify Conditions (1) and (2) above. For any $(H,n) \in \Sub(G) \times \NN$ and all $X,Y \in \SH_{G, (p)}$, there is a canonical isomorphism
\[
K(n)_*(\Phi^H(X)) \otimes_{K(n)_*} K(n)_*(\Phi^H(Y)) \cong K(n)_*(\Phi^H(X \wedge Y)).
\]
Since $K(n)_*$ is a graded field, this implies that the first condition is satisfied. To see that $\supp$ detects ring objects, suppose that $R \in \Alg(\SH_{G, (p)})$ with $\supp(R) = \varnothing$. By the nilpotence theorem in the form of \cite[Thm.~3~i)]{HS98}, this implies that $\Phi^H(R) = 0$ for all $H \in \Sub(G)$, hence $R = 0$ because $\{\Phi^H\}_{H \in \Sub(G)}$ is jointly conservative.  
\end{proof}

For any nonzero finite ($p$-local) spectrum $F$ there exists an $n\in \N$ such that $K(n)_*(F) \neq 0$, which can be seen using the Atiyah--Hirzebruch spectral sequence.

\begin{definition}
If $X$ is a finite $G$-spectrum $X$, we define its \emph{type function} as the function $\type{X}\colon \Sub(G) \to \NN$ defined by
\[
\type{X}(H) = 
	\begin{cases}
	\min\{n \in \N\mid  K(n)_*(\Phi^H(X)) \neq 0\} & \text{if } \Phi^H(X) \neq 0 \\
	\infty & \text{otherwise}.
	\end{cases}
\]
\end{definition}
It turns out that the central problem for determining the topology on $\Spc(\mathcal{SH}^c_{G,(p)})$ and classifying the tt-ideals is the question of which functions can occur as $\type{X}$ for some $X\in \mathcal{SH}^c_{G,(p)}$.

By a theorem due to Ravenel \cite[Thm.~2.11]{Rav84}, if $F \in \SH_{(p)}^c$ has $K(n)_*(F) = 0$, then $K(n-1)_*(F)$ as well, so for finite $G$-spectra $X$, the function $\type{X}$ contains the same information as $\supp(X)$. As before, we may extend $\type{}$ to subcategories $\cX \subseteq \SH_{G, (p)}^c$ by defining 
\[
\type{\cX}(H) = \min\{\type{X}(H)\mid  X \in \cX\}.
\]
We write $f \ge g$ for two functions $f,g\colon \Sub(G) \to \NN$ if $f(H) \ge g(H)$ for all $H \in \Sub(G)$. The next corollary is now an immediate consequence of \Cref{prop:abstractsupport} and \Cref{prop:eqsupport}:

\begin{cor}\label{cor:eqsupport}
Let $X\in \SH_{G, (p)}^c$ be a finite $G$-spectrum and $I\subseteq \SH_{G, (p)}^c$ a tt-ideal such that $\type{X} \ge \type{I}$. Then $X \in I$.

In particular, the type function $\type{I}$ determines the tt-ideal $I$: It is given by all $X\in \SH_{G, (p)}^c$ such that $\type{X} \ge \type{I}$.
\end{cor}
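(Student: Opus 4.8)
The plan is to deduce this directly from the abstract thick tensor ideal theorem (\Cref{prop:abstractsupport}) applied to the coarse support theory $\{\kappa_{(H,n)}\}$ of \Cref{prop:eqsupport}, after translating the hypothesis $\type{X} \ge \type{I}$ into the support-containment hypothesis $\supp(X) \subseteq \supp(I)$. So the first step is to unwind definitions: by construction $\supp(X) = \{(H,n) \mid K(n)_*(\Phi^H(X)) \neq 0\}$, and similarly $\supp(I) = \bigcup_{Y \in I}\supp(Y)$. I would then observe that, because of Ravenel's theorem \cite[Thm.~2.11]{Rav84} recalled above, for any finite spectrum $F$ the set $\{n \mid K(n)_*(F) \neq 0\}$ is an up-set in $\NN$, namely $\{n \mid n \ge \type{F}\}$ when $F$ is non-contractible and $\varnothing$ otherwise (here using that any nonzero finite spectrum has $K(n)_*(F) \neq 0$ for some $n$). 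Applying this with $F = \Phi^H(X)$ and $F = \Phi^H(Y)$ for $Y \in I$, we get that $\supp(X) \cap (\{H\} \times \NN) = \{H\} \times [\type{X}(H), \infty]$ and $\supp(I) \cap (\{H\}\times \NN) = \{H\} \times [\type{I}(H), \infty]$ (with the convention $[\infty,\infty] = \{\infty\}$ and $[k,\infty] = \varnothing$ only in the degenerate reading — more precisely, if $\type{I}(H) = \infty$ then every $Y \in I$ has $\Phi^H(Y) = 0$ and the slice of $\supp(I)$ over $H$ is empty, and likewise for $\supp(X)$). Hence $\supp(X) \subseteq \supp(I)$ if and only if $\type{X}(H) \ge \type{I}(H)$ for every $H$, i.e.\ $\type{X} \ge \type{I}$.

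With this translation in hand, the conclusion $X \in I$ is immediate from \Cref{prop:abstractsupport}: the hypotheses there are exactly that $\{\kappa_{(H,n)}\}$ is a coarse support theory on the unital algebraic stable homotopy category $\SH_{G,(p)}$ (which is \Cref{prop:eqsupport}, the ambient category being compactly generated by the dualizable objects $G/H_+$ with compact unit $\mathbb{S}_G$, as recalled in \Cref{sec:eqhomotopy}), that $X$ is a compact object, and that $\supp(X) \subseteq \supp(\cD)$ for the thick tensor ideal $\cD = I$. The final sentence of the corollary — that $\type{I}$ determines $I$ — then follows formally: one inclusion is the statement just proved (any $X$ with $\type{X} \ge \type{I}$ lies in $I$), and the reverse inclusion (every $X \in I$ satisfies $\type{X} \ge \type{I}$) is immediate from the definition $\type{I}(H) = \min\{\type{Y}(H) \mid Y \in I\}$.

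There is no serious obstacle here; the corollary is a near-formal consequence of the two preceding propositions, and the only point requiring a moment's care is the elementary combinatorial observation that the support of a finite spectrum is an up-set in the chromatic direction, which is exactly where Ravenel's theorem enters and is why one may pass freely between the ``$\type{}$'' bookkeeping and the ``$\supp$'' bookkeeping. One should also note in passing the subtlety that $\supp(X) = \supp(\tti{X})$ (already recorded before the corollary, using exactness and symmetric monoidality of each $\Phi^H$ together with the Künneth isomorphism for $K(n)$), so that applying \Cref{prop:abstractsupport} to $X$ rather than to the ideal it generates loses nothing.
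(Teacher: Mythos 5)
Your proposal is correct and follows exactly the paper's route: translate $\type{X}\ge\type{I}$ into $\supp(X)\subseteq\supp(I)$ using Ravenel's theorem (so that the support of a finite spectrum over each $H$ is the chromatic up-set above $\type{X}(H)$), and then apply \Cref{prop:abstractsupport} to the coarse support theory of \Cref{prop:eqsupport}, with the reverse inclusion in the last sentence being immediate from the definition of $\type{I}$. The paper treats the corollary as an immediate consequence of those two propositions with precisely this translation, so there is nothing to add.
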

When $I$ is given by $\tti{Y}$ for another finite $G$-spectrum $Y$, the corollary implies that $X\in \tti{Y}$ if and only if $\type{X}\geq \type{Y}$. This statement is perhaps slightly surprising, because it means that $X$ lies in $\tti{Y}$  if and only if $\Phi^H(X)$ lies in $\tti{\Phi^H(Y)}$  for all closed subgroups $H$, with no condition on how the various geometric fixed points are glued together.

\begin{remark}
Bousfield has extended Ravenel's theorem to all $p$-torsion suspension spectra, see \cite{bousfield_type99}, hence the notion of type is well-defined for this larger class of spectra. We are not aware of an analogue of the classification of thick subcategories in this setting. 
\end{remark}

We end this subsection with a result concerning the type function of induced $G$-spectra, which we shall make repeated use of. Recall from \Cref{sec:restriction} that for a closed subgroup $H$ of a compact Lie group $G$, the restriction functor
\[ 
\res_H^G\colon \mathcal{SH}^c_{G,(p)}\to \mathcal{SH}^c_{H,(p)} 
\]
has a left adjoint denoted $\Ind_H^G$.

\begin{lemma} \label{lem:induced} Let $H$ be a closed subgroup of a compact Lie group $G$, and $X$ a finite $H$-spectrum. Then the following hold:
\begin{enumerate}[i)]
	\item The type of $\Phi^H(\Ind_H^G(X))$ is equal to the type of $\Phi^H(X)$.
	\item If $K$ is another closed subgroup of $G$, then the type of $\Phi^K(\Ind_H^G(X))$ is at least the minimum of the types of $\Phi^{K'}(X)$ for all subgroups $K'$ of $H$ that are conjugate to $K$ in $G$.
	\item If $K$ is not subconjugate to $H$, then the geometric fixed points $\Phi^K(\Ind_H^G(X))$ are trivial.
\end{enumerate}
\end{lemma}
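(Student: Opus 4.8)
The plan is to reduce everything to the fundamental computation of the geometric fixed points of an induced spectrum, i.e., the equivariant analogue of the tom Dieck splitting on the level of geometric fixed points. The key input is the formula
\[
\Phi^K(\Ind_H^G(X)) \simeq \bigvee_{[g] \in (K\backslash G / H)_{\mathrm{fin}}} \Phi^{K \cap {}^g H}\big(\res^H_{K^{g^{-1}}\cap H}(X)\big),
\]
or more precisely its space-level version: for a based $H$-CW complex $A$, there is a $G$-homeomorphism $(G_+ \wedge_H A)^K \cong \bigvee_{[g]} (A^{g^{-1}Kg})$ where $g$ runs over those double cosets $KgH$ for which $g^{-1}Kg \subseteq H$. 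First I would establish this identification: by \Cref{lem:finiteg} it suffices to treat $X = S^{-V} \wedge \Sigma^\infty A$, and then, since $\Ind_H^G$ commutes with $\Sigma^\infty$ (up to the equivalence $\Ind_H^G(\Sigma^\infty A) \simeq \Sigma^\infty(G_+ \wedge_H A)$ from \Cref{sec:restriction}) and $\Phi^K(\Sigma^\infty B) \simeq \Sigma^\infty(B^K)$, the whole question becomes the point-set statement about fixed points of $G_+ \wedge_H A$, which is a standard orbit-space decomposition. One must track the suspension by $V$ separately, but $\Phi^K$ sends $S^{-V}$ to $S^{-V^K}$ and this only shifts degrees, so it does not affect vanishing or type.

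Granting this decomposition, the three parts follow quickly. For (iii), if $K$ is not subconjugate to $H$, then no double coset $KgH$ satisfies $g^{-1}Kg \subseteq H$, so the wedge is empty and $\Phi^K(\Ind_H^G(X)) \simeq *$. For (ii), the wedge is indexed by a subset of double cosets, and each wedge summand is $\Phi^{K'}(X)$ (up to a degree shift coming from the representation sphere) for $K' = g^{-1}Kg$ a subgroup of $H$ conjugate to $K$ in $G$; since Morava $K$-theory satisfies a Künneth formula and $K(n)_*(A \vee B) = K(n)_*(A) \oplus K(n)_*(B)$, the type of a wedge is the minimum of the types of the summands, giving exactly the asserted lower bound (it is a lower bound, not equality, because different $g$ may give the same subgroup $K'$ and cancellation could in principle occur — though in fact the summands appear with well-defined types, the inequality is all we need and all that is safe to claim). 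For (i), take $K = H$: among the relevant double cosets, the class of the identity contributes the summand $\Phi^H(X)$ itself, with no shift; every other contributing double coset $HgH$ with $g^{-1}Hg \subseteq H$ forces $g^{-1}Hg = H$ (equality of closed subgroups of the same dimension and the same finite component group is automatic once one contains the other, using that ${}^g H$ and $H$ are abstractly isomorphic compact Lie groups), so the corresponding summand is again a copy of $\Phi^H(X)$; hence $\Phi^H(\Ind_H^G X)$ is a finite wedge of copies of $\Phi^H(X)$, all of the same type, and therefore has type equal to that of $\Phi^H(X)$.

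The main obstacle is getting the fixed-point decomposition of $G_+ \wedge_H A$ stated correctly and at the right level of generality for a compact Lie group: for finite groups this is the elementary double-coset formula, but for compact Lie $G$ and $H$ one should be a little careful that the double coset space $K \backslash G / H$ is compact and that only finitely many double cosets contribute a nonempty fixed-point set when $A$ is a finite $H$-CW complex (this finiteness is what makes the wedge genuinely a finite wedge, so that the Künneth/additivity argument for type goes through). This should follow from compactness of $G$ together with the fact that $A$ has finitely many cells, each of whose isotropy is a single closed subgroup up to conjugacy; alternatively one can cite the space-level tom Dieck style analysis in \cite[Sec.~V.11]{LMSM86}. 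Once that is in place, parts (i)--(iii) are formal, modulo the degree bookkeeping for the $S^{-V}$ factor which I would dispatch in a sentence.
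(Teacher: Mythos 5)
Your argument hinges on the claimed decomposition $(G_+\wedge_H A)^K\cong\bigvee_{[g]}A^{g^{-1}Kg}$, with the wedge over double cosets $KgH$ such that $g^{-1}Kg\subseteq H$. This is the correct formula for \emph{finite} groups, but it is false for general compact Lie groups, and this is precisely where the Lie-theoretic subtlety of the lemma lives. Already for $G=\T$, $H=K=\{e\}$ one has $(G_+\wedge_H A)^K=\T_+\wedge A$, while your formula would predict a wedge of copies of $A$ indexed by the (uncountable) double coset space. The correct statement is that $(G/H)^K$ decomposes into finitely many \emph{open $N_G(K)$-orbits}, and each orbit contributes a balanced product $(N_G(K)\cdot g_i)_+\wedge_{N_H(K^{g_i})}A^{K^{g_i}}$, i.e.\ the total space of a bundle over $(N_G(K)\cdot g_i)/N_H(K^{g_i})$ with fibre $A^{K^{g_i}}$, not a copy of $A^{K^{g_i}}$ itself. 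Consequently the type estimate in part ii) cannot be obtained by wedge additivity of Morava $K$-theory: one instead argues that each such summand is built from free $N_H(K^{g_i})$-cells and hence lies in the thick subcategory generated by $\Sigma^{\infty}A^{K^{g_i}}$, which gives the inequality $\geq$ but nothing more. Indeed, equality genuinely fails: \Cref{rem:counter} exhibits $X$ over $\Sigma_{p^n}$ with underlying type $0$ whose induction to $U(p^n-1)$ has underlying type $n$; under your formula the underlying spectrum of the induction would contain $\Phi^e(X)$ as a wedge summand and would be forced to have type $0$. For the same reason your proof of i) is not correct as stated: $\Phi^H(\Ind_H^G X)$ is in general not a finite wedge of copies of $\Phi^H(X)$ (take $G=\T$, $H=e$, $X=\mathbb{S}$: the induction has underlying spectrum $S^0\vee S^1$). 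The conclusion of i) is salvaged because in the case $K=H$ there is a single orbit and the balanced product untwists, $N_G(H)_+\wedge_H A^H\cong (N_G(H)/H)_+\wedge A^H$, which both lies in the thick subcategory generated by $A^H$ and contains $A^H$ as a retract; that retraction, not a wedge decomposition, is what gives equality of types. Part iii) survives, since it only uses that $(G/H)^K$ is empty when $K$ is not subconjugate to $H$.

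A secondary, fixable gap is the handling of the desuspension: $V$ is only an $H$-representation, so $\Ind_H^G(S^{-V}\wedge\Sigma^{\infty}A)$ is not of the form $S^{-V}\wedge\Ind_H^G(\Sigma^{\infty}A)$ and one cannot simply say ``$\Phi^K$ sends $S^{-V}$ to $S^{-V^K}$''. One should embed $V$ into the restriction of a $G$-representation $W$, rewrite $X\simeq\res_H^G(S^{-W})\wedge\Sigma^{\infty}(S^{\res_H^G W-V}\wedge A)$, and use Frobenius reciprocity (\Cref{sec:restriction}) to pull $S^{-W}$ out of the induction; after that, smashing with a $G$-representation sphere does not affect types and the suspension-spectrum case applies. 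With the corrected fixed-point formula and the thick-subcategory/retract arguments in place of wedge additivity, the overall strategy (reduce via \Cref{lem:finiteg} to suspension spectra, compute space-level fixed points of $G_+\wedge_H A$) does match the intended proof.
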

A proof of this result is given in the appendix.

\begin{remark} \label{rem:counter} When $G$ is a finite group, there is in fact an equality in Part $ii)$ above, i.e., in that case the type of $\Phi^K(\Ind_H^G(X))$ equals the minimum of the types of $\Phi^{K'}(\Ind_H^G(X))$ for all subgroups $K'$ of $H$ that are conjugate to $K$ in $G$, see \Cref{rem:finequal} in the appendix.

This is not generally true for compact Lie groups. Note that when $K$ is the trivial group, an equality in Part $ii)$ would mean that the type of $\Phi^{e}\Ind_H^G(X)$ is always the same as the type of $\Phi^e(X)$. Counterexamples to such an equality are given by the following complexes, which play a central role in \cite{BHN^+17} and were studied extensively by Arone, Lesh and Mahowald \cite{Aro98,AM99,AL17}. We refer to \cite[Sec. 4]{BHN^+17} for more details. Let $n>0$ and ${\mathcal P}_{p^n}^\diamond$ denote the unreduced suspension of the complex of proper non-trivial partitions of the set $\{1,\hdots,p^n\}$, equipped with the action of $\Sigma_{p^n}$ by permuting the partitions. Further, let $\overline{\rho}$ denote the reduced regular representation of $\Sigma_{p^n}$, and define $X\in \mathcal{SH}^c_{\Sigma_{p^n},(p)} $ by
\[ X=F(\Sigma^{\infty} {\mathcal P}_{p^n}^\diamond,\mathbb{S}_{\Sigma_{p^n}})\wedge S^{\overline{\rho}}. \]
The reduced regular representation (this time over $\mathbb{C}$) defines an embedding $\Sigma_{p^n}\hookrightarrow U(p^n-1)$, so we can consider $\Ind_{\Sigma_{p^n}}^{U(p^n-1)}X$. Since $\mathcal P_{p^n}$ is a non-trivial wedge of spheres (see \cite[4.109]{OT92}), the underlying non-equivariant spectrum of $X$, i.e., $\Phi^e(X)$, has type $0$. On the other hand, by work of Arone-Mahowald \cite{AM99} and Arone \cite{Aro98}, the underlying non-equivariant spectrum of $\Ind_{\Sigma_{p^{n}}}^{U(p^n-1)}X$ has type $n$, see also the proof of \cite[Thm. 2.2]{BHN^+17}.
\end{remark}

\subsection{The underlying set of $\Spc(\mathcal{SH}^c_{G,(p)})$}
In this subsection we describe the set of prime ideals in $\mathcal{SH}^c_{G,(p)}$, using the geometric fixed point functors to pull back prime ideals from $\mathcal{SH}^c_{(p)}$ and applying results of the previous subsections to deduce that every prime ideal is obtained in this way. For any $(H,n) \in \Sub(G) \times \NN_{>0}$, we write
\[ 
P_G(H,n)=(\Phi^H)^{-1}(P(n))=\{ X\in \mathcal{SH}^c_{G,(p)}\mid  K(n-1)_*(\Phi^H(X))=0 \}. 
\]
Since $\Phi^H$ is symmetric monoidal, it follows that each $P_G(H,n)$ is again a prime.

We now show that every prime ideal in $\mathcal{SH}^c_{G,(p)}$ is of the form $P_G(H,n)$ for some closed subgroup $H$ and $n\in \NN_{>0}$, where $n$ is unique and $H$ is unique up to conjugacy. We first prove uniqueness, for which we make use of finite $G$-spectra of the following form: 

\begin{definition} Let $H\in \Sub(G)$ and $n\in \NN$. A finite $G$-spectrum $X$ is said to be \emph{of type} $(\subjconj{H},n)$ if $\type{X}(K)=n$ for all subgroups $K$ which are conjugate in $G$ to a subgroup of $H$ and $\type{X}(K)=\infty$ for all other $K$.
\end{definition}

Such finite $G$-spectra always exist, by the following simple construction: Let $Y$ be a non-equivariant finite type $n$ spectrum. We consider the finite $G$-spectrum $X=G/H_+\wedge i_G^*(Y)$, i.e., the smash product of the based $G$-space $G/H_+$ with the inflation of $Y$ to a $G$-spectrum.

\begin{lemma} For every non-equivariant finite type $n$ spectrum $Y$ the finite $G$-spectrum $X=G/H_+\wedge i_G^*(Y)$ is of type $(\subjconj{H},n)$.
\end{lemma}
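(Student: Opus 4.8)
The plan is to compute the geometric fixed points $\Phi^K(X)$ for $X = G/H_+ \wedge i_G^*(Y)$ directly, using the monoidal property of $\Phi^K$ together with the computation of $\Phi^K$ on the orbit $G/H_+$. Since $\Phi^K$ is symmetric monoidal, we have $\Phi^K(X) \simeq \Phi^K(G/H_+) \wedge \Phi^K(i_G^*(Y))$. The second factor is easy: inflation is pullback along $G \to G/G = e$, so by the compatibility formula \eqref{eq:inflation} we get $\Phi^K(i_G^*(Y)) \simeq \Phi^{e}(Y) = Y$ for every closed subgroup $K$ (here $e$ denotes the trivial group and $Y$ is viewed in $\SH_{(p)}$). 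For the first factor, recall that $\Phi^K(\Sigma^\infty A) \simeq \Sigma^\infty(A^K)$ for a based $G$-CW complex $A$; applying this to $A = G/H_+$ gives $\Phi^K(G/H_+) \simeq \Sigma^\infty((G/H)^K_+)$.

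The next step is to identify the fixed point space $(G/H)^K$. A coset $gH$ is fixed by $K$ if and only if $KgH = gH$, i.e., $g^{-1}Kg \subseteq H$; so $(G/H)^K$ is nonempty precisely when $K$ is subconjugate to $H$ in $G$, and in that case it is a nonempty space. Therefore $\Phi^K(X) \simeq \Sigma^\infty((G/H)^K_+) \wedge Y$. When $K$ is not subconjugate to $H$, the space $(G/H)^K$ is empty, so $(G/H)^K_+ = S^0$ is a point with a disjoint basepoint — wait, rather $(G/H)^K_+$ is just the basepoint, hence $\Phi^K(X) \simeq 0$ and $\type{X}(K) = \infty$, as required. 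When $K$ is subconjugate to $H$, the space $(G/H)^K$ is a nonempty (in fact, a disjoint union of closed submanifolds, but all we need is nonemptiness), so $\Sigma^\infty((G/H)^K_+)$ is a nonzero finite spectrum whose $K(m)$-homology is nonzero for all $m$ (for instance because it is a nonzero wedge summand retract situation — concretely, $H\F_p$-homology in degree $0$ is the free $\F_p$-module on the path components, which is nonzero, so $K(\infty)_* \neq 0$, and rationally it is nonzero too, so $K(0)_* \neq 0$; more simply, picking a point of $(G/H)^K$ splits off a copy of $\mathbb{S}$). Hence $\Phi^K(X) \simeq \Sigma^\infty((G/H)^K_+) \wedge Y$ has $K(m)_* \neq 0$ if and only if $K(m)_*(Y) \neq 0$, by the Künneth formula for Morava $K$-theory, so $\type{X}(K) = \type{Y} = n$.

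Assembling these two cases shows $\type{X}(K) = n$ for all $K$ subconjugate in $G$ to a subgroup of $H$ (equivalently, subconjugate to $H$), and $\type{X}(K) = \infty$ otherwise, which is exactly the assertion that $X$ is of type $(\subjconj{H}, n)$. I do not expect any serious obstacle here: the only mildly delicate point is to confirm that $\Sigma^\infty((G/H)^K_+)$ is nonzero with $K(m)_* \neq 0$ for all $m$ when $(G/H)^K \neq \varnothing$, and this follows immediately from the fact that a choice of $K$-fixed point of $G/H$ gives a retraction of $\Sigma^\infty((G/H)^K_+)$ onto the sphere spectrum $\mathbb{S}$, so $K(m)_*(\mathbb{S})$ is a retract of $K(m)_*(\Sigma^\infty((G/H)^K_+))$ and is nonzero.
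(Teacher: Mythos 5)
Your proposal is correct and follows essentially the same route as the paper: compute $\Phi^K(X)\simeq (G/H)^K_+\wedge Y$ via monoidality of geometric fixed points, the suspension-spectrum formula and compatibility with inflation, then note that $(G/H)^K$ is nonempty exactly when $K$ is subconjugate to $H$, in which case a fixed coset splits off $S^0$ and the K\"unneth formula for Morava $K$-theories gives $\type{X}(K)=n$, while emptiness gives $\Phi^K(X)\simeq 0$. The only blemish is the mid-sentence correction about $(G/H)^K_+$ in the empty case, which should simply read that it is the basepoint.
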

\begin{proof} For a subgroup $K$ of $G$ we have an equivalence
\[ \Phi^K(X)\simeq (G/H)^K_+\wedge \Phi^K(i_G^*(Y))\simeq (G/H)^K_+\wedge Y,\]
which follows from the behaviour of geometric fixed points with respect to smash product, suspension spectra, and inflation, as recalled in \Cref{sec:eqhomotopy}.
Now if $K$ is conjugate to a subgroup $K^g$ of $H$, then $(G/H)^K$ is non-empty, since it contains the coset $g^{-1}H$. Hence $(G/H)^K_+$ contains $S^0$ as a retract, and it follows that the type of $(G/H)^K_+\wedge Y$ is equal to the type of $Y$, namely~$n$. If on the other hand $K$ is not conjugate to a subgroup of $H$, then the fixed points $(G/H)^K$ are empty, and hence $\Phi^K(X)$ is trivial.
\end{proof}

The existence of such finite $G$-spectra directly implies:

\begin{lemma}\label{lem:primerelations}
Let $H$ and $K$ be closed subgroups of $G$, and $n,m\in \NN_{>0}$. We have:
\begin{enumerate}
	\item If there is an inclusion $P_G(K,n)\subseteq P_G(H,m)$, then $n\geq m$ and $K$ is conjugate to a subgroup of $H$.
	\item If $P_G(K,n)=P_G(H,m)$, then $n=m$ and $K$ is conjugate to $H$.
\end{enumerate}
\end{lemma}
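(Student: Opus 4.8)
The plan is to feed the finite $G$-spectra of type $(\subjconj{L},j)$ just constructed into the hypothesised inclusions. The one preliminary needed is the translation between membership in the primes $P_G(L,\ell)$ and the type function: for a finite $G$-spectrum $Z$, a closed subgroup $L$, and $\ell\in\NN_{>0}$, one has $Z\in P_G(L,\ell)$ if and only if $\type{Z}(L)\ge \ell$. For $\ell=\infty$ this is essentially the definition ($P_G(L,\infty)=(\Phi^L)^{-1}(P(\infty))$ consists of the $Z$ with $\Phi^L(Z)\simeq 0$), and for finite $\ell$ it follows by iterating Ravenel's theorem \cite[Thm.~2.11]{Rav84}, which turns $K(\ell-1)_*(\Phi^L Z)=0$ into $K(j)_*(\Phi^L Z)=0$ for all $j\le \ell-1$.

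For part (1), assume $P_G(K,n)\subseteq P_G(H,m)$; I would establish the two conclusions separately. To get $n\ge m$: this is automatic when $n=\infty$, so assume $n<\infty$, pick a non-equivariant finite spectrum $Y$ of type exactly $n$ (e.g., a Mitchell complex $F(n)$), and set $X=i_G^*(Y)$. By \eqref{eq:inflation} applied to the projection $G\to G/G$, every $\Phi^L(X)$ is equivalent to $Y$, so $\type{X}$ is the constant function $n$; in particular $\type{X}(K)=n\ge n$, hence $X\in P_G(K,n)\subseteq P_G(H,m)$, so $m\le \type{X}(H)=n$. To get that $K$ is subconjugate to $H$: take $X=G/H_+$, which by the preceding lemma is of type $(\subjconj{H},0)$. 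Since $m\ge 1$ we have $\type{X}(H)=0<m$, so $X\notin P_G(H,m)$, and therefore $X\notin P_G(K,n)$ by the assumed inclusion, i.e., $\type{X}(K)<n$. But $\type{X}(K)$ equals $0$ if $K$ is $G$-conjugate to a subgroup of $H$ and $\infty$ otherwise, and the value $\infty$ is incompatible with $\type{X}(K)<n$; hence $K$ is $G$-conjugate to a subgroup of $H$.

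For part (2), I would apply (1) to each of the two inclusions packaged into the equality $P_G(K,n)=P_G(H,m)$, obtaining $n\ge m$ and $m\ge n$, hence $n=m$, together with the statements that $K$ is subconjugate to $H$ and $H$ is subconjugate to $K$. It then remains to deduce conjugacy from mutual subconjugacy of closed subgroups of a compact Lie group; this is the only step that is not pure bookkeeping, but it is standard and can be cited from \cite{tD79}. Concretely, composing the two subconjugacy relations exhibits a conjugate of $H$ sitting inside a conjugate of $K$ sitting inside $H$; since the resulting closed subgroup of $H$ is isomorphic, as a Lie group, to $H$, a comparison of dimensions shows it contains the identity component $H_0$ and hence has index $|\pi_0 H|/|\pi_0 H|=1$, so all three subgroups coincide, giving that $K$ is $G$-conjugate to $H$. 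Thus the only (mild) obstacle is this antisymmetry statement; everything else is an immediate consequence of the definition and the two lemmas preceding the statement.
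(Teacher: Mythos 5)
Your proof is correct and follows essentially the same route as the paper: you test the hypothesised inclusion against finite $G$-spectra of type $(\subjconj{L},j)$ supplied by the preceding lemma (your choices $i_G^*(Y)$ and $G/H_+$ are instances of the paper's type-$(\subjconj{G},n)$ and type-$(\subjconj{H},m-1)$ spectra), and you deduce part (2) from part (1) together with the fact that a closed subgroup of a compact Lie group cannot be conjugate to a proper subgroup of itself. The only cosmetic differences are that you argue directly rather than contrapositively and justify the antisymmetry step by a dimension/$\pi_0$ comparison, whereas the paper uses the open--closed-union-of-components argument; both are sound.
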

\begin{proof} Part (1): If $n<m$, then any finite $G$-spectrum of type $(\subjconj{G},n)$ lies in $P_G(K,n)$ but not in $P_G(H,m)$. If $K$ is not conjugate to a subgroup of $H$, then any finite $G$-spectrum of type $(\subjconj{H},m-1)$ lies in $P_G(K,n)$ but not in $P_G(H,m)$.

Part (2) is a direct consequence of Part (1), since a closed subgroup of a compact Lie group cannot be conjugate to a proper subgroup of itself. Indeed, suppose $H$ is conjugate to $H'\subseteq H$. Then by compactness, $H'$ has to be both closed and open in $H$, hence a union of finitely many components of $H$. But we also have a bijection $\pi_0(H)\cong \pi_0(H')$, hence $H'=H$ as desired.
\end{proof}

\begin{theorem} \label{thm:underlyingset}
Let $G$ be a compact Lie group. Then the assignment $(H,n)\mapsto P_G(H,n)$ defines a bijection 
\[ 
(\Sub(G)/G)\times \NN_{>0} \xrightarrow{\cong} \Spc(\mathcal{SH}^c_{G,(p)}). 
\]
\end{theorem}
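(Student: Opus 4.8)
The plan is to combine the abstract support-theoretic input (Corollary~\ref{cor:eqsupport}, i.e.\ the equivariant analogue of Step~(3)) with the uniqueness statement already recorded in Lemma~\ref{lem:primerelations}. Injectivity of the assignment $(H,n)\mapsto P_G(H,n)$ on $(\Sub(G)/G)\times\NN_{>0}$ is exactly part~(2) of Lemma~\ref{lem:primerelations}, so the content of the theorem is \emph{surjectivity}: every prime ideal $\wp\subsetneq \SH^c_{G,(p)}$ has the form $P_G(H,n)$.

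First I would fix a prime ideal $\wp$ and extract from it a candidate pair $(H,n)$. The natural invariant to look at is the type function restricted to $\wp$: set $f=\type{\wp}\colon\Sub(G)\to\NN$, i.e.\ $f(K)=\min\{\type{X}(K)\mid X\in\wp\}$. By Corollary~\ref{cor:eqsupport}, $\wp$ consists precisely of those $X$ with $\type{X}\ge f$, so $\wp$ is recovered from $f$; it remains to show that $f$ is of the very special shape $f=f_{H,n}$ where $f_{H,n}(K)=n$ if $K$ is subconjugate to $H$ and $f_{H,n}(K)=\infty$ otherwise --- the type function of a finite $G$-spectrum of type $(\subjconj{H},n)$, which then gives $\wp=P_G(H,n)$. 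To pin down $H$: primality of $\wp$ should force the set of subgroups $K$ with $f(K)<\infty$ to be ``subconjugacy-principal''. Concretely, if $X$ has type $(\subjconj{H_1},m_1)$ and $Y$ has type $(\subjconj{H_2},m_2)$ with neither $H_1$ subconjugate to $H_2$ nor vice versa, then $\Phi^K(X\wedge Y)\simeq\Phi^K(X)\wedge\Phi^K(Y)$ is trivial for every $K$ (since every $K$ fails to be subconjugate to $H_1$ or to $H_2$), so $X\wedge Y\in P_G(L,1)$ for all $L$, hence $X\wedge Y=0\in\wp$; primality gives $X\in\wp$ or $Y\in\wp$, i.e.\ $m_1\ge f(H_1)$ or $m_2\ge f(H_2)$, and feeding in $m_i$ below $f(H_i)$ yields a contradiction with the definition of $f$ unless the finite subgroups are comparable. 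A clean way to run this: let $H$ be a closed subgroup with $f(H)<\infty$ that is maximal among such (up to conjugacy) --- maximality is legitimate since any chain of closed subgroups of a compact Lie group stabilises, or one may instead phrase it via the collection of conjugacy classes $\{K : f(K)<\infty\}$ being closed downward under subconjugacy (immediate from $(G/L)^K_+$ splitting off $S^0$, as in the construction before Lemma~\ref{lem:primerelations}) and argue that it has a single maximal element using the smash-product trick above.

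Once $H$ is identified, set $n=f(H)$ and let $X_{H,n}=G/H_+\wedge i_G^*(Y)$ be a finite $G$-spectrum of type $(\subjconj{H},n)$ with $Y$ a non-equivariant type-$n$ complex, as constructed in the lemma preceding Lemma~\ref{lem:primerelations}. I would show $\wp=P_G(H,n)$ in two inclusions. For $P_G(H,n)\subseteq\wp$: by Corollary~\ref{cor:eqsupport} it suffices to check $\type{P_G(H,n)}\ge f$; but $P_G(H,n)$ contains the type-$(\subjconj{H'},m)$ spectra for all $(H',m)$ with $H'$ \emph{not} subconjugate to $H$ (any $m$) and for all $H'$ subconjugate to $H$ with $m<n$, and taking the pointwise min over these exhibits $\type{P_G(H,n)}=f_{H,n}\ge f$ provided $f\le f_{H,n}$, which in turn follows from the fact that $f(K)=\infty$ for $K$ not subconjugate to $H$ (by maximality of $H$ plus the downward-closedness noted above) and $f(K)\le f(H)=n$ for $K$ subconjugate to $H$ (since $\type{X}(K)\le\type{X}(H)$ whenever $K\subseteq H$, again from the retract $S^0\hookrightarrow(G/L)^K_+$). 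For the reverse inclusion $\wp\subseteq P_G(H,n)$: take $X\in\wp$; then $\type{X}\ge f$, so in particular $\type{X}(H)\ge n=f(H)$, which says $K(n-1)_*(\Phi^H(X))=0$, i.e.\ $X\in P_G(H,n)$. Hence $\wp=P_G(H,n)$, as desired, and the bijection follows.

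The main obstacle is the surjectivity step, and within it the argument that the support of an arbitrary prime $\wp$ is ``geometrically punctual'' --- that $\{K:\type{\wp}(K)<\infty\}$ is the set of subgroups subconjugate to a \emph{single} conjugacy class $[H]$, with a \emph{constant} value $n$ on that set. Downward-closedness is easy; the delicate points are (a) ruling out two incomparable maximal subgroups with finite $f$-value, which is where primality and the vanishing of $\Phi^K(X\wedge Y)$ for incomparable supports is essential, and (b) ruling out a strictly decreasing value of $f$ along a subconjugacy chain below $H$, i.e.\ showing $f(K)\ge f(H)$ for $K\subseteq H$ (not just $\le$): if $f(K)=m<n=f(H)$, pick $X\in\wp$ with $\type{X}(K)=m$; this need not lie in $P_G(H,n)$, and one must derive a contradiction --- plausibly by smashing $X$ with a type-$(\subjconj{H},n)$ complex $Z$ and noting $X\wedge Z\in\wp$, whence $\Phi^K(X\wedge Z)$ has type $\max(m, n)=n>m$ while $\type{\wp}(K)=m$ forces some element of $\wp$ to have type exactly $m$ at $K$, and then using primality once more to transfer the defect. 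Getting this bookkeeping exactly right --- and making sure the maximal $H$ is well-defined despite $\Sub(G)$ being infinite --- is where the real work lies; everything after the pair $(H,n)$ is named is a formal consequence of Corollary~\ref{cor:eqsupport} and Lemma~\ref{lem:primerelations}.
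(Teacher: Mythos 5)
Your injectivity step is the paper's (it is exactly \Cref{lem:primerelations}), but your surjectivity strategy rests on a false structural claim: the type function of a prime $\wp$ is \emph{not} of the shape $f_{H,n}$ (value $n$ on subconjugates of $H$, $\infty$ elsewhere). Already for $G=\Z/p$ the prime $P_G(G,1)$ contains $\Sigma^{\infty}G_+$, whose underlying spectrum is a wedge of spheres, so $\type{P_G(G,1)}(e)=0$, not $1$ or $\infty$; and the paper's \Cref{ex:notfinitegen} records that $\type{P_{\T}(\T,1)}$ is $0$ on every finite subgroup and $1$ at $\T$. The auxiliary claims you lean on fail as well: (i) for incomparable $H_1,H_2$ the smash of a type-$(\subjconj{H_1},m_1)$ with a type-$(\subjconj{H_2},m_2)$ spectrum is not zero, since the trivial subgroup (and any common subconjugate) is subconjugate to both, so $\Phi^e(X\wedge Y)\simeq\Phi^e(X)\wedge\Phi^e(Y)\neq 0$; (ii) the monotonicity $\type{X}(K)\le\type{X}(H)$ for $K\subseteq H$ holds for the special spectra $G/L_+\wedge i_G^*(Y)$ (that is where the retract $S^0\hookrightarrow (G/L)^K_+$ lives) but not for arbitrary finite $G$-spectra --- the blue-shift complexes of \cite{BHN^+17} have underlying type $n+1$ and geometric $G$-fixed-point type $n$. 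Finally, even granting a correct candidate $(H,n)$, proving $P_G(H,n)\subseteq\wp$ by comparing type functions via \Cref{cor:eqsupport} amounts to knowing which primes are contained in which, i.e.\ the hard open \Cref{quest}; so this route cannot be closed without solving a problem the theorem is supposed to avoid.

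The paper's proof sidesteps all of this. Writing $\wp=\bigcap_{H'}P_G(H',\type{\wp}(H'))$ by \Cref{cor:eqsupport}, one picks $H$ \emph{minimal} (not maximal) with $\type{\wp}(H)>0$, which exists because descending chains of closed subgroups of a compact Lie group stabilize. If there were $X\in P_G(H,\type{\wp}(H))\setminus\wp$, take $Y$ of type $(\subjconj{H},0)$; then $Y\notin\wp$, yet $X\wedge Y$ lies in every factor of the intersection: at conjugates of $H$ because of $X$, at proper subconjugates of $H$ because minimality forces $\type{\wp}(H')=0$ there, and at all other $H'$ because $\Phi^{H'}(Y)=0$. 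Hence $X\wedge Y\in\wp$, contradicting primality, and so $\wp=P_G(H,\type{\wp}(H))$. If you want to salvage your write-up, replace the attempted identification of $\type{\wp}$ with this minimality-plus-smash argument; the material before and after (injectivity, and the reduction to exhibiting some $(H,n)$) can stay.
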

\begin{proof} 
\Cref{lem:primerelations} shows that the map is injective, hence it suffices to see that every prime ideal $\wp$ is of the form $P_G(H,n)$ for some subgroup $H$ and $n\in \N$. By \Cref{cor:eqsupport}, we have
\[ 
\wp=\{ Y\in \mathcal{SH}^c_{G,(p)}\mid  \type{Y}\geq \type{\wp} \}=\bigcap_{H} P_G(H,\type{\wp}(H)),
\]
where we take $P_G(H,\type{\wp}(H))$ to mean the full category $\mathcal{SH}^c_{G,(p)}$ if $\type{\wp}(H)=0$. Since $\wp$ is prime, it is in particular not the full tt-ideal of all finite $G$-spectra, so there exists a subgroup $H$ such that $\type{\wp}(H)>0$. Without loss of generality, we can assume that $H$ is minimal with this property, since every descending chain of compact Lie groups stabilizes after finitely many steps. We claim that $\wp$ is equal to $P_G(H,\type{\wp}(H))$. If not, there would exist an element $X\in P_G(H,\type{\wp}(H))\setminus\wp$. Furthermore, any finite $G$-spectrum $Y$ of type $(\subjconj{H},0)$ does not lie in $\wp$, but we claim it is contained in $\bigcap_{H'\not\sim H} P_G(H',\type{\wp}(H'))$. Indeed, if $H'$ is conjugate to a proper subgroup of $H$, then $\type{\wp}(H')=0$ by the minimality of $H$ and hence $P_G(H',\type{\wp}(H'))$ is the ideal of all finite $G$-spectra. If $H'$ is not conjugate to a subgroup of $H$, then $\type{Y}(H')=\infty$ and hence $Y$ is contained in $P_G(H',n)$ for all $n$. However, the smash product $X\wedge Y$ lies in the intersection 
\[ 
P_G(H,\type{\wp}(H))\cap \bigcap_{H'\not \sim H} P_G(H',\type{\wp}(H'))=\wp,
\]
contradicting the fact that $\wp$ is prime. Hence, such an $X$ cannot exist, which implies that $\wp=P_G(H,\type{\wp}(H))$ as desired. 
\end{proof}

\subsection{The equivariant nilpotence theorem} \label{sec:nilpotence}

The goal of this subsection is to strengthen the observation made in the proof of \Cref{prop:eqsupport} that $\{\kappa_{(H,n)}\}_{(H,n) \in \Sub(G) \times \NN}$ detects ring objects to an equivariant version of the nilpotence theorem. This generalizes \cite[Thm.~4.15]{BS17b} from finite groups to arbitrary compact Lie groups; however, unlike their proof, our argument does not rely on the classification of prime ideals in $\SH_{G, (p)}$, but rather exploits properties of the geometric fixed point functors to reduce to the non-equivariant nilpotence theorem by Devinatz, Hopkins, and Smith.

\begin{lemma} Let $R$ be a homotopy $G$-ring spectrum (i.e., an object of $\Alg(\SH_{G})$)  and $x\in \pi_n^G(R)$ an element. Then $x$ is nilpotent if and only if the mapping telescope
\[ 
\tel(x)=\hocolim_{n\in \N} (R \xra{x\cdot} S^{-n}\wedge R \xra{x\cdot} S^{-2n}\wedge R \xra{x\cdot} \hdots) 
\]
is contractible.
\end{lemma}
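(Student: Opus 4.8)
The plan is to prove the two implications separately, with the backward direction being the routine one and the forward direction requiring the main input. First I would dispense with the easy direction: if $x\in\pi_n^G(R)$ is nilpotent, say $x^k=0$, then the composite of $k$ consecutive maps in the telescope diagram is nullhomotopic (it is multiplication by $x^k$, using the ring structure of $R$), so the telescope is a homotopy colimit of a diagram in which every sufficiently long composite vanishes; such a telescope is contractible. The only mild care needed is to identify the $k$-fold composite $R\xra{x\cdot}S^{-n}\wedge R\xra{x\cdot}\cdots\xra{x\cdot}S^{-kn}\wedge R$ with $S^{-kn}\wedge(x^k\cdot)$ via the multiplication map $R\wedge R\to R$, which is where the homotopy ring structure enters.

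For the converse, suppose $\tel(x)$ is contractible; I want to conclude $x$ is nilpotent. The key observation is that $\tel(x)$ may be described as a localization of $R$ inverting $x$, i.e. $\tel(x)\simeq R[x^{-1}]$, and that $\tel(x)$ is again a homotopy $G$-ring spectrum (the telescope of multiplication-by-$x$ on a ring inherits a ring structure, since smashing and filtered homotopy colimits are compatible with the monoidal structure). Now the element $1\in\pi_0^G(R)$ maps to an element of $\pi_0^G(\tel(x))$; if $\tel(x)\simeq *$ then this unit is zero, so by the standard argument the unit $S_G\to R\to \tel(x)$ is null, which means $1\in\pi_0^G(R)$ maps to $0$ after inverting $x$. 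Since $\pi_*^G(\tel(x))=\pi_*^G(R)[x^{-1}]$ — this identification uses that $G/G_+=\mathbb{S}_G$ is compact, so $[\mathbb{S}_G,-]^G_*$ commutes with the filtered homotopy colimit defining the telescope — the statement ``$1=0$ in $\pi_0^G(R)[x^{-1}]$'' is precisely the assertion that $x^k\cdot 1 = x^k = 0$ in $\pi_*^G(R)$ for some $k$, i.e. $x$ is nilpotent.

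The main obstacle — really the only point that needs genuine justification — is the identification $\pi_*^G(\tel(x))\cong \pi_*^G(R)[x^{-1}]$ together with the fact that this localization detects nilpotence of $x$ correctly (equivalently, that $\tel(x)\simeq *$ forces $x$ nilpotent rather than merely forcing some weaker vanishing). This rests on the compactness of the equivariant sphere $\mathbb{S}_G = G/G_+$ in $\SH_{G,(p)}$, recorded in \Cref{sec:eqhomotopy}: compactness gives that $[\mathbb{S}_G,-]_*^G = \pi_*^G(-)$ commutes with the sequential homotopy colimit, so $\pi_*^G(\tel(x))$ is computed as the colimit of $\pi_{*+kn}^G(R)$ along multiplication by $x$, which is exactly the localization $x^{-1}\pi_*^G(R)$. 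One then just observes that $x^{-1}\pi_*^G(R)=0$ if and only if $1/1 = 0$, i.e. if and only if $x$ annihilates $1$ after finitely many multiplications, i.e. $x^k=0$. Everything else is formal manipulation of mapping telescopes and does not use any equivariant input beyond what has already been set up.
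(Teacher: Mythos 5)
Your proof is correct, and the direction ``contractible $\Rightarrow$ nilpotent'' is essentially the paper's argument verbatim: compactness of the unit $\mathbb{S}_G$ gives $\pi_*^G(\tel(x))\cong\colim\bigl(\pi_*^G(R)\xra{x\cdot}\pi_*^G(R)\to\cdots\bigr)$, and vanishing of this colimit kills the image of $1$ at a finite stage, i.e.\ $x^k=0$. (Your side remarks that $\tel(x)\simeq R[x^{-1}]$ is again a homotopy ring spectrum are never used and would need an argument for a merely homotopy-associative $R$, but they are harmless.) Where you genuinely diverge is the direction ``nilpotent $\Rightarrow$ contractible.'' You observe that $x^k=0$ makes every $k$-fold composite in the spectrum-level telescope diagram nullhomotopic as a map of $G$-spectra (via homotopy associativity), and then invoke the standard fact that a sequential homotopy colimit with eventually null composites is contractible; implicitly this is justified by testing against the compact generators $G/H_+$. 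The paper instead argues only on homotopy groups: nilpotence gives $\pi_*^G(\tel(x))=0$, which in the equivariant world does \emph{not} yet give contractibility, so it restricts to every closed subgroup $H$, uses that $\res_H^G(x)$ is nilpotent and $\tel(\res_H^G(x))\simeq\res_H^G(\tel(x))$, and concludes that all $\pi_*^H(\tel(x))$ vanish. Your spectrum-level nullhomotopy bypasses that restriction step and handles all subgroups at once, which is slightly slicker; the paper's route makes the equivariant subtlety (that $\pi_*^G$ alone does not detect contractibility) explicit, which is the one point your write-up leaves as an unexamined ``standard fact'' and would be worth a sentence.
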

\begin{proof} First we note that the homotopy ring $\pi_*^G(\tel(x))$ is isomorphic to the colimit
\[ 
\colim_{n\in \N}(\pi_*^G(R)\xra{x\cdot} \pi_*^G(R) \xra{x\cdot} \hdots) 
\]
Now, if $\tel(x)$ is contractible, then this colimit must be trivial. In particular, the element represented by the class of the element $1\in \pi_{0}^G(R)$ under the inclusion of the first term in the colimit system must be trivial. By standard facts about sequential colimits, this means that $x^n\cdot 1=x^n$ must be trivial for some $n\in \N$, and hence $x$ is nilpotent.

If $x$ is nilpotent, say $x^n=0$, then clearly the colimit above is trivial, since any composition of $n$ maps in the colimit system is trivial. So we find that $\pi_*^G(\tel(x))=0$. Furthermore, all restrictions $\res_H^G(x)$ to closed subgroups $H$ are also nilpotent, since restriction maps are ring homomorphisms. So we conclude that $\pi_*^H(\tel(\res_H^G(x)))$ is trivial for all closed subgroups $H$. Since we have an equivalence 
\[ 
\tel(\res_H^G(x)) \simeq \res_H^G(\tel(x)), 
\]
this shows that all the equivariant homotopy groups of $\tel(x)$ vanish and hence $\tel(x)$ is contractible. 
\end{proof}
\begin{remark} In fact, the statement holds more generally for $x\in \pi_V^G(R)$ an element in the $RO(G)$-graded homotopy ring of $R$, with the same proof.
\end{remark}

Recall that there is a comparison map
\[ 
\Phi^H\colon \pi_*^H(X)\to \pi_*(\Phi^H(X))=\Phi^H_*(X), 
\]
which is a ring map if $X$ is a homotopy $G$-ring spectrum. Given an element $x\in \pi_*^G(X)$, we also use the notation $\Phi^H(x)$ for the element $\Phi^H(\res_H^G(x))\in \Phi^H_*(X)$. 

\begin{prop} \label{prop:nilpotence} Let $R$ be a homotopy $G$-ring spectrum and $x\in \pi_*^G(X)$. Then $x$ is nilpotent if and only if all its geometric fixed points $\Phi^H(x)$ are nilpotent in $\Phi_*^H(R)$.
\end{prop}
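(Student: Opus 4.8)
The plan is to reduce the statement to the preceding lemma — which says that $x\in\pi_*^G(R)$ is nilpotent if and only if the mapping telescope $\tel(x)$ is contractible — and then to detect contractibility of $\tel(x)$ geometric-fixed-point-wise, using that the family $\{\Phi^H\}_{H\in\Sub(G)}$ is jointly conservative.

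One implication is immediate and I would dispatch it first: if $x$ is nilpotent, say $x^m=0$, then since $\res_H^G$ is a ring homomorphism and the comparison map $\Phi^H\colon\pi_*^H(R)\to\Phi^H_*(R)$ is a ring homomorphism (as $R$ is a homotopy $G$-ring spectrum), one gets $\Phi^H(x)^m=\Phi^H(\res_H^G(x^m))=0$, so every $\Phi^H(x)$ is nilpotent.

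For the converse, assume $\Phi^H(x)$ is nilpotent in $\Phi^H_*(R)$ for every closed subgroup $H\leq G$. By the preceding lemma it suffices to prove that $\tel(x)$ is contractible, and since $\{\Phi^H\}_{H\in\Sub(G)}$ is jointly conservative (recalled in \Cref{sec:geometricfixed}), it is enough to show that $\Phi^H(\tel(x))$ is contractible for each $H$. The key step is the identification
\[ \Phi^H(\tel(x))\simeq\tel(\Phi^H(x)). \]
This holds because $\Phi^H$ is exact and preserves coproducts, hence preserves sequential homotopy colimits; because $\Phi^H$ carries the $G$-sphere $S^{-n}$ — which is the inflation of the non-equivariant $S^{-n}$ and thus has trivial action — to the non-equivariant $S^{-n}$ via \eqref{eq:inflation}; and because $\Phi^H$, being symmetric monoidal and a ring map on $R$, sends the self-map ``$x\cdot$'' of the tower defining $\tel(x)$ to the self-map ``$\Phi^H(x)\cdot$''. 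Finally, if $\Phi^H(x)^m=0$ then any composite of $m$ consecutive structure maps in the tower defining $\tel(\Phi^H(x))$ is null, so $\tel(\Phi^H(x))$ is contractible — this is exactly the easy half of the preceding lemma applied to the ordinary ring spectrum $\Phi^H(R)$. Hence $\Phi^H(\tel(x))$ is contractible for all $H$, so $\tel(x)$ is contractible and $x$ is nilpotent.

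I expect the only point requiring care — and it is routine — to be the identification $\Phi^H(\tel(x))\simeq\tel(\Phi^H(x))$, which combines compatibility of $\Phi^H$ with homotopy colimits, with suspension, and with the multiplicative structure; no input beyond the formal properties of $\Phi^H$ collected in \Cref{sec:eqhomotopy} is needed. In particular, in contrast with the proof of the finite-group case by Balmer--Sanders, this argument does not use the classification of prime ideals in $\SH_{G,(p)}$. (The same proof works verbatim for $x$ in the $RO(G)$-graded homotopy ring, using the $RO(G)$-graded version of the preceding lemma noted in the remark above.)
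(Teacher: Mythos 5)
Your argument is correct and is essentially the paper's own proof: both reduce to the preceding telescope lemma, use joint conservativity of the geometric fixed point functors, and rely on the identification $\Phi^H(\tel(x))\simeq\tel(\Phi^H(x))$ to transfer nilpotence back and forth. The only cosmetic difference is that you dispatch the easy direction directly via the ring maps $\Phi^H\colon\pi_*^H(R)\to\Phi^H_*(R)$, whereas the paper folds both directions into a single chain of equivalences through the telescope criterion.
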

\begin{proof}By the previous lemma we know that $x$ is nilpotent if and only if its mapping telescope $\tel(x)$ is contractible. But this is the case if and only if all the geometric fixed points $\Phi^H(\tel(x))$ are contractible. Since we have an equivalence $\Phi^H(\tel(x))\simeq \tel(\Phi^H(x))$, this in turn is equivalent to all the geometric fixed points $\Phi^H(x)$ being nilpotent, again by the previous lemma. This finishes the proof.
\end{proof}

Recall that $F(-,-)$ denotes the function $G$-spectrum between two $G$-spectra, see \cite[Sec. II.3]{LMSM86} or \cite[Sec. II.3]{MM02}. The function $G$-spectra are uniquely determined by being right adjoint to the smash product and make $\mathcal{SH}_{G,(p)}$ a closed symmetric monoidal category. In particular, as is generally the case for closed symmetric monoidal categories (see, for example, \cite[Secs. 1.5-1.8]{Kel82}), the function $G$-spectra $F(-,-)$ define a canonical enrichment of $\mathcal{SH}_{G,(p)}$ over itself, and the smash product canonically extends to an enriched bi-functor. As a formal consequence, for any pair of $G$-spectra $X$ and $Y$, one can form a homotopy $G$-ring spectrum $R_{X,Y}=\bigvee_n F(X^{\wedge n},Y^{\wedge n})$ with multiplication given by the enriched smash product
\[ 
F(X^{\wedge n},Y^{\wedge n})\wedge F(X^{\wedge m},Y^{\wedge m})\to F(X^{\wedge (n+m)},Y^{\wedge (n+m)}).
\]
Its homotopy ring is given by
\[ 
\pi_*^G(R_{X,Y})\cong \bigoplus_{n} [X^{\wedge n},Y^{\wedge n}]_*^G. 
\]
Now if $X$ is a finite $G$-spectrum, the geometric fixed points $\Phi^{H}(F(X^{\wedge n},Y^{\wedge n}))$ are naturally equivalent to the non-equivariant function spectrum $F(\Phi^H(X)^{\wedge n},\Phi^H(Y)^{\wedge n})$. One way to see this is the following: Since $X$ is dualizable, $F(X^{\wedge n},Y^{\wedge n})$ is canonically equivalent to $F(X^{\wedge n},\mathbb{S}_G)\wedge Y^{\wedge n}$ (see \cite[Thm. 4.17]{GM95a}). Now $F(X^{\wedge n},\mathbb{S}_G)$ is the dual of $X^{\wedge n}$, whose $H$-geometric fixed points are equivalent to the dual of $\Phi^{H}(X^{\wedge n})\simeq \Phi^{H}(X)^{\wedge n}$, since $\Phi^H$ is symmetric monoidal. Hence we obtain the above equivalence via
\begin{align*} \Phi^H(F(X^{\wedge n},Y^{\wedge n}))& \simeq \Phi^H(F(X^{\wedge n},\mathbb{S}_G))\wedge \Phi^H(Y^{\wedge n}) \\
&\simeq F(\Phi^H(X)^{\wedge n},\mathbb{S})\wedge \Phi^H(Y)^{\wedge n} \\
& \simeq F(\Phi^H(X)^{\wedge n},\Phi^H(Y)^{\wedge n}),\end{align*}
where the last step uses dualizability of $\Phi^H(X)^{\wedge n}$.

The above equivalences are compatible with taking smash products, so it follows that, for finite $X$, $\Phi^H(R_{X,Y})$ is equivalent as a homotopy ring spectrum to $R_{\Phi^H(X),\Phi^H(Y)}$. Furthermore, the geometric fixed point map takes a map $g\colon X^{\wedge n}\to Y^{\wedge n}$ to $\Phi^H(g)\colon\Phi^H(X)^{\wedge n}\to \Phi^H(Y)^{\wedge n}$. Applying \Cref{prop:nilpotence} to $R_{X,Y}$ hence implies:

\begin{cor} \label{cor:smashnilpotent}
A map $f\colon X\to Y$ of $G$-spectra, with $X$ finite, is smash-nilpotent if and only if $\Phi^H(f)\colon\Phi^H(X)\to \Phi^H(Y)$ is smash-nilpotent for all closed subgroups $H$.
\end{cor}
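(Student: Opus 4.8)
The plan is to deduce the corollary directly from \Cref{prop:nilpotence}, applied to the homotopy $G$-ring spectrum $R_{X,Y}=\bigvee_n F(X^{\wedge n},Y^{\wedge n})$ constructed above. The key observation is that smash-nilpotence of the map $f$ is literally nilpotence of a distinguished homotopy class of $R_{X,Y}$: the class $f\in[X,Y]_0^G$ sits in the $n=1$ summand of
\[ \pi_*^G(R_{X,Y})\cong \bigoplus_n [X^{\wedge n},Y^{\wedge n}]_*^G, \]
and by the definition of the multiplication on $R_{X,Y}$ (smash product of maps) its $m$-th power is the class $f^{\wedge m}\in[X^{\wedge m},Y^{\wedge m}]_0^G$ in the $n=m$ summand. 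Hence $f$ is nilpotent in $\pi_*^G(R_{X,Y})$ if and only if $f^{\wedge m}$ is null for some $m$, i.e., if and only if $f$ is smash-nilpotent.

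Next I would invoke the identification established in the paragraph preceding the corollary: since $X$ is finite, $\Phi^H(R_{X,Y})$ is equivalent \emph{as a homotopy ring spectrum} to $R_{\Phi^H(X),\Phi^H(Y)}$, compatibly with the above description of its homotopy ring, and under this equivalence the geometric fixed point map carries the class $f$ to the class of $\Phi^H(f)\colon\Phi^H(X)\to\Phi^H(Y)$. Running the argument of the previous paragraph for the non-equivariant ring spectrum $R_{\Phi^H(X),\Phi^H(Y)}$ then shows that $\Phi^H(f)$ is nilpotent in $\Phi^H_*(R_{X,Y})$ if and only if $\Phi^H(f)$ is smash-nilpotent as a map of spectra. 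Combining the two translations, \Cref{prop:nilpotence} applied to $R=R_{X,Y}$ and $x=f$ says precisely that $f$ is smash-nilpotent if and only if every $\Phi^H(f)$ is, which is the assertion of the corollary. (Note that this handles both implications uniformly, so no separate ``easy direction'' argument is needed.)

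I do not expect a genuine obstacle here: the entire mathematical content is packaged in \Cref{prop:nilpotence}, and what remains is the bookkeeping already set up before the corollary, namely that the multiplication on $R_{X,Y}$ sends the $m$-tuple $(f,\dots,f)$ to $f^{\wedge m}$, and that $\Phi^H(R_{X,Y})\simeq R_{\Phi^H(X),\Phi^H(Y)}$ is an equivalence of homotopy ring spectra sending $f$ to $\Phi^H(f)$. Both are immediate from the constructions recalled above — the first from the definition of the product, the second from dualizability of $X$ together with the fact that $\Phi^H$ is exact, coproduct-preserving and symmetric monoidal — so the only care required is to keep the grading and the summand indices straight.
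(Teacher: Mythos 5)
Your argument is correct and is essentially the paper's own proof: both apply \Cref{prop:nilpotence} to the homotopy $G$-ring spectrum $R_{X,Y}$, identifying smash-nilpotence of $f$ with nilpotence of its class in $\pi_*^G(R_{X,Y})$ and using the ring equivalence $\Phi^H(R_{X,Y})\simeq R_{\Phi^H(X),\Phi^H(Y)}$ carrying $f$ to $\Phi^H(f)$. The bookkeeping you spell out (powers landing in the $n=m$ summand, compatibility of $\Phi^H$ with the multiplication) is exactly what the paper leaves implicit, so there is nothing to add.
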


These results allow us to transport non-equivariant nilpotence theorems to the equivariant world, by testing on all geometric fixed points. In particular, we obtain the following equivariant version of the nilpotence theorem as a consequence of the non-equivariant nilpotence theorem:

\begin{theorem}
A map $f\colon X\to Y$ of $p$-local $G$-spectra, with $X$ finite, is smash-nilpotent if and only if $K(n)_*\Phi^H(f)\colon K(n)_*\Phi^H(X)\to K(n)_*\Phi^H(Y)$ is smash-nilpotent in the category of graded $K(n)_*$-modules for all closed subgroups $H$ of $G$ and all $n\in \NN$.
\end{theorem}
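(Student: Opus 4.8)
The plan is to reduce the equivariant nilpotence theorem to the non-equivariant one of Devinatz--Hopkins--Smith by systematically applying the geometric fixed point reduction principle developed in \Cref{cor:smashnilpotent}. The key observation is that \Cref{cor:smashnilpotent} already converts the equivariant question into a non-equivariant one: a map $f\colon X\to Y$ with $X$ finite is smash-nilpotent if and only if $\Phi^H(f)$ is smash-nilpotent for all closed subgroups $H$ of $G$. Since $X$ is a finite $G$-spectrum, each $\Phi^H(X)$ is a finite (non-equivariant) $p$-local spectrum by the discussion in \Cref{sec:geometricfixed}, so for each fixed $H$ the map $\Phi^H(f)\colon \Phi^H(X)\to \Phi^H(Y)$ is a map out of a finite spectrum.

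Next I would invoke the non-equivariant nilpotence theorem in its Morava $K$-theory form. The theorem of Devinatz--Hopkins--Smith \cite{DHS88,HS98}, as recalled in \Cref{ex:moravak}, states that a map $g\colon A\to B$ of $p$-local spectra with $A$ finite is smash-nilpotent if and only if $K(n)_*(g)\colon K(n)_*(A)\to K(n)_*(B)$ vanishes (equivalently, is smash-nilpotent) for all $n\in\NN$; here the relevant statement is that smash-nilpotence of $g$ is detected by the Morava $K$-theories. Applying this to $g=\Phi^H(f)$ for each $H$, we get that $\Phi^H(f)$ is smash-nilpotent if and only if $K(n)_*\Phi^H(f)$ is smash-nilpotent in graded $K(n)_*$-modules for all $n\in\NN$.

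Combining the two reductions gives the result: $f$ is smash-nilpotent $\iff$ $\Phi^H(f)$ is smash-nilpotent for all closed $H\subseteq G$ $\iff$ $K(n)_*\Phi^H(f)$ is smash-nilpotent for all closed $H\subseteq G$ and all $n\in\NN$. The only minor points to address carefully are: first, that $\Phi^H$ of a finite $G$-spectrum is again finite, which is recorded in \Cref{sec:geometricfixed} (geometric fixed points preserve compact objects); and second, the precise equivalence between ``$K(n)_*(g)$ is smash-nilpotent as a map of $K(n)_*$-modules'' and ``$K(n)_*(g)=0$'' — since $K(n)_*$ is a graded field, a map of $K(n)_*$-modules is smash-nilpotent exactly when it is zero, so the two formulations of the non-equivariant nilpotence theorem agree and the statement as phrased is the honest translation.

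I do not expect a serious obstacle here: all the substantive work has already been done in establishing \Cref{prop:nilpotence} and \Cref{cor:smashnilpotent}, and the remaining content is purely the input of the classical nilpotence theorem. If anything, the one place demanding care is making sure the ``smash-nilpotent in the category of graded $K(n)_*$-modules'' hypothesis is interpreted correctly — i.e., that one is testing smash-nilpotence of the induced map $K(n)_*\Phi^H(f)$ of $K(n)_*$-module spectra (or equivalently, using that $K(n)_*$ is a graded field, simply that this map is eventually zero under iterated smashing) — but this is exactly the form in which Devinatz--Hopkins--Smith is usually stated, so no new argument is required.
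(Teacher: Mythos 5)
Your proposal is correct and is exactly the argument the paper intends: the paper deduces the theorem immediately by combining \Cref{cor:smashnilpotent} (smash-nilpotence is detected on all geometric fixed points, using that $X$ is finite) with the non-equivariant nilpotence theorem of Devinatz--Hopkins--Smith applied to each $\Phi^H(f)$, which is a map out of the finite spectrum $\Phi^H(X)$. Your remark that smash-nilpotence of $K(n)_*\Phi^H(f)$ over the graded field $K(n)_*$ is the same as its vanishing, so the statement matches the usual form of the classical theorem, is the same bookkeeping point implicit in the paper.
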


As mentioned previously, for finite groups $G$ this theorem was proven using tt-geometric techniques and induction on the order of $G$ by Balmer and Sanders.  We close this section with a remark about alternative versions of the equivariant nilpotence theorem. 

\begin{remark}
Let $MU_G$ be the homotopical $G$-equivariant complex cobordism spectrum in the sense of tom Dieck~\cite{tomdieck_eqmu}. For any closed subgroup $H \subseteq G$ the geometric fixed points $\Phi^HMU_G$ split into a non-trivial wedge of shifted copies of non-equivariant $MU$, see \cite[Thm.~4.10]{sinha_eqmu}, so it follows from \Cref{cor:smashnilpotent} and the classical nilpotence theorem~\cite{DHS88} for $MU$ that $MU_G$ detects smash-nilpotence of maps between finite $G$-spectra.

When $G=C_2$ one can also consider the real cobordism spectrum $MU_{\R}$, where $C_2$ acts on the underlying spectrum $MU$ via complex conjugation \cite{Lan68}. This one does not detect smash-nilpotence since its geometric fixed points $\Phi^{C_2}MU_{\R}\simeq MO$ only see $2$-primary information. For a concrete example of a non smash-nilpotent map between finite $C_2$-spectra which is nilpotent in $MU_{\R}$, let $C$ be the cofiber of the transfer map $S^0 \to (C_2)_+ \to S^0$ and consider the multiplication by $2$ map
\[
v=2\colon C \to C.
\]
We claim that $(MU_{\R})_*(v)$ is nilpotent, while $v$ is not. To this end, first observe that $\Phi^e(v)$ induces multiplication by $2$ on the mod-$2$ Moore spectrum $\Phi^eC \simeq S^0/2$ and hence is smash-nilpotent. Secondly, applying $C_2$-geometric fixed points yields the map
\[
\Phi^{C_2}(v) = 2\colon S^0 \vee S^1 \to S^0 \vee S^1,
\] 
which induces the null map after smashing with $H\Z/2$ and hence also $MO$. Therefore, the map $MO_*(\Phi^{C^2}(v))$ is nilpotent. However, multiplication by $2$ is not nilpotent on $\Phi^{C_2}C\simeq S^0 \vee S^1$ and consequently $v$ cannot be smash-nilpotent.

\end{remark}

\section{Type functions of finite $G$-spectra are locally constant} \label{sec:subtopology}

In this section we discuss an essential property of the type functions $\type{X}$ for finite $G$-spectra $X$: They are locally constant with respect to the Hausdorff topology on the space of conjugacy classes of closed subgroups $\Sub(G)/G$. This property plays a central role in the description of the Balmer topology, as we explain in \Cref{sec:topology}.

We now recall the topology on the set $\Sub(G)/G$. First we choose a bi-invariant metric $d$ on the Lie group $G$, and also write $d$ for the induced Hausdorff metric on $\Sub(G)$, defined via
\[ 
d(H,K)=\max(\sup(d(h,K)\mid  h\in H), \sup(d(H,k)\mid  k\in K)),
\]
where $d(h,K)$ denotes the distance from the point $h$ to the compact subset $K$, and likewise for $d(H,k)$. One can show that the resulting topology on $\Sub(G)$ is independent of the chosen metric on~$G$. The topology on $\Sub(G)/G$ is then defined as the quotient topology from the projection $\Sub(G)\to \Sub(G)/G$. We collect the following properties of the spaces $\Sub(G)$ and $\Sub(G)/G$:

\begin{prop} \label{prop:mz}
For any compact Lie group $G$, the space of subgroups $\Sub(G)$ is compact. If a sequence of subgroups $(H_i)_{i\in \N}$ converges to a subgroup $H$ of $G$ in $\Sub(G)$, then $H_i$ is conjugate to a subgroup of $H$ for almost all $i\in \N$. Moreover, the conjugation action of $G$ on $\Sub(G)$ is continuous, and the quotient $\Sub(G)/G$ is a compact totally-disconnected Hausdorff space.
\end{prop}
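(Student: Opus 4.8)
First I would obtain compactness of $\Sub(G)$ by realising it as a closed subspace of the hyperspace $\cK(G)$ of nonempty closed subsets of $G$ equipped with the Hausdorff metric $d$. Since $G$ is a compact metric space, $\cK(G)$ is compact (it is complete because $G$ is, and totally bounded because $G$ is). To see that $\Sub(G)$ is closed in $\cK(G)$, suppose $H_i\to H$ in $\cK(G)$ with each $H_i$ a subgroup: then $e\in H$ as $e\in H_i$ for all $i$, and given $h,k\in H$ one may choose $h_i,k_i\in H_i$ with $h_i\to h$ and $k_i\to k$, so that $h_ik_i^{-1}\in H_i$ converges to $hk^{-1}$, forcing $hk^{-1}\in H$ since $H$ is closed. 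Thus $\Sub(G)$ is compact; in particular it is a compact metric space.

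The statement that a sequence $H_i\to H$ eventually consists of subgroups subconjugate to $H$ is the one point where the Lie structure of $G$ is essential, and here I would simply invoke the theorem of Montgomery--Zippin \cite{MZ42} (see also \cite{tD79}) that every closed subgroup $H$ has a neighbourhood in $\Sub(G)$ all of whose members are $G$-conjugate to a subgroup of $H$; the mechanism is that a subgroup lying Hausdorff-close to $H$ has, via the exponential map and the absence of small subgroups, Lie algebra close to a subalgebra of $\operatorname{Lie}(H)$, and can then be conjugated into $H$ by a slice argument. The convergence statement is the immediate consequence that $H_i$ eventually lies in such a neighbourhood.

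Next I would treat the quotient. Because the metric $d$ on $G$ is bi-invariant, each conjugation map $c_g$ is an isometry of $G$ and hence induces an isometry of $(\Sub(G),d)$; joint continuity of $G\times\Sub(G)\to\Sub(G)$, $(g,H)\mapsto gHg^{-1}$, then follows from the estimate
\[
d(gHg^{-1},g'H'(g')^{-1})\ \le\ d(H,H')+\sup_{h'\in H'} d(gh'g^{-1},g'h'(g')^{-1})
\]
together with the uniform continuity of $(g,x)\mapsto gxg^{-1}$ on the compact group $G$. Consequently the orbit map $q\colon\Sub(G)\to\Sub(G)/G$ is open, since $q^{-1}(q(U))=\bigcup_{g\in G} gUg^{-1}$ is a union of images of $U$ under conjugation homeomorphisms, hence open, and $\Sub(G)/G$ is compact as the continuous image of a compact space. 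For the Hausdorff property I would observe that the orbit relation $R=\{(H,gHg^{-1}) : g\in G,\ H\in\Sub(G)\}\subseteq\Sub(G)\times\Sub(G)$ is the image of the compact space $G\times\Sub(G)$ under a continuous map, hence compact and so closed in the metric space $\Sub(G)\times\Sub(G)$; an open quotient by a closed equivalence relation is Hausdorff.

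Finally, total disconnectedness I would take from \cite[Thm.~5.6.1]{tD79}, which rests on the slice theorem and the Montgomery--Zippin structure theory. The idea is that the subconjugacy property forces many conjugation-invariant subsets to be closed --- for instance $\{[K] : \dim K\ge d\}$ and, for each closed $H$, the downward-closed set $q(\Sub(H))=\{[K] : K\ \text{subconjugate to}\ H\}$ --- and these can be combined, via an induction on $\dim G$ whose base case $\dim G=0$ is the case of finite $G$ (where $\Sub(G)/G$ is finite), to separate any two distinct conjugacy classes by a clopen set; since $\Sub(G)/G$ is compact Hausdorff this yields total disconnectedness. The main obstacles are exactly this last assertion and the subconjugacy property it relies on: both depend on honest Lie theory --- the exponential map, the absence of small subgroups, and the slice theorem --- rather than on formal point-set arguments, which is why I would import them from Montgomery--Zippin and tom Dieck rather than reprove them.
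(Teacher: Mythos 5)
Your proposal is correct, and it follows essentially the same route as the paper, which simply cites Propositions 5.6.1 and 5.6.2 of \cite{tD79} together with the Montgomery--Zippin theorem \cite{MZ42} for exactly the two genuinely Lie-theoretic inputs (the subconjugacy of nearby subgroups and total disconnectedness of the orbit space) that you also import from those sources. The additional elementary verifications you supply --- compactness of $\Sub(G)$ as a closed subspace of the hyperspace of closed subsets, continuity of the conjugation action via bi-invariance of the metric, and the compact Hausdorff quotient via the open quotient map and closed orbit relation --- are all sound and merely fill in details the paper delegates to tom Dieck.
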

A proof is given in Propositions 5.6.1 and 5.6.2 of \cite{tD79}. The part about convergent sequences of subgroups is a consequence of the Montgomery--Zippin Theorem \cite{MZ42}.

\begin{example} The subgroups of the circle group $\T$ are given by the cyclic subgroups $C_n$ for $n\in \N$ and the full group $\T$. The Hausdorff topology on $\Sub(\T)$ is that of a converging sequence, i.e., the subspace $\{C_n\}_{n\in \N}$ is discrete and the sets $\{C_n\}_{n\geq m}\cup \{\T\}$ for $m\in \N$ form a neighbourhood basis for the point $\T$.
\end{example}

\begin{figure}[h]
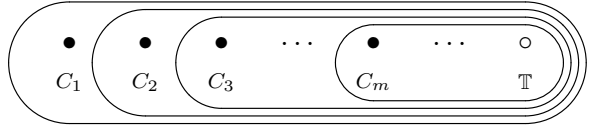

\[
\qquad\vcenter{\xy
%
%
%
(0,0)*{\bullet};
(10,0)*{\bullet};
(20,0)*{\bullet};
(30,0)*{\hdots};
(40,0)*{\bullet};
(50,0)*{\hdots};
(60,0)*{\circ};
%
%
(0,-5)*{\scriptstyle C_1};
(10,-5)*{\scriptstyle C_2};
(20,-5)*{\scriptstyle C_3};
(40,-5)*{\scriptstyle C_m};
(60,-5)*{\scriptstyle \T};
%
%
(40,-2.5)*\cir(5,5){l^r};
{\ar@{-} (40,2.5)*{};(60,2.5)*{}};
(60,-2.5)*\cir(5,5){r_l};
{\ar@{-} (60,-7.5)*{};(40,-7.5)*{}};
%
%
(20,-2.5)*\cir(6,6){l^r};
{\ar@{-} (20,3.5)*{};(60,3.5)*{}};
(60,-2.5)*\cir(6,6){r_l};
{\ar@{-} (60,-8.5)*{};(20,-8.5)*{}};
%
%
(10,-2.5)*\cir(7,7){l^r};
{\ar@{-} (10,4.5)*{};(60,4.5)*{}};
(60,-2.5)*\cir(7,7){r_l};
{\ar@{-} (60,-9.5)*{};(10,-9.5)*{}};
%
%
(0,-2.5)*\cir(8,8){l^r};
{\ar@{-} (0,5.5)*{};(60,5.5)*{}};
(60,-2.5)*\cir(8,8){r_l};
{\ar@{-} (60,-10.5)*{};(0,-10.5)*{}};
\endxy}
\]
\caption{A schematic illustration of $\Sub(\T)$.}
\end{figure}

As claimed above, we have:
\begin{prop} \label{prop:isotropylocconstant} For every finite $G$-spectrum $X$, the type function
\[ 
\type{X}\colon\Sub(G)/G \to \NN 
\]
is locally constant.
\end{prop}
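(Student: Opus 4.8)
The plan is to reduce the statement to \Cref{lem:finiteg} and the convergence property of $\Sub(G)$ recorded in \Cref{prop:mz}. By \Cref{lem:finiteg}, after smashing with an invertible representation sphere (which shifts the type function by nothing, since $\Phi^K(S^{-V})$ is an invertible, hence nonzero, non-equivariant spectrum for every $K$), we may assume $X = \Sigma^\infty A$ for a based finite $G$-CW complex $A$ — more precisely a homotopy retract of the $p$-localization of such, but a retract of $\Sigma^\infty A$ only lowers the type function, and the support of a retract is contained in the support of the ambient spectrum, so it suffices to treat $\Sigma^\infty A$ itself and then pass to retracts using that a pointwise-between function sandwiched by two locally constant functions need not be locally constant — so instead I argue directly: the type function of a retract $Y$ of $X$ satisfies $\type{Y}(H)\in\{\type{X}(H),\infty\}$ is false in general, so I reformulate. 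The clean statement to prove is: for $A$ a based finite $G$-CW complex, the function $K\mapsto \type{\Sigma^\infty(A^K)}$ descends to a locally constant function on $\Sub(G)/G$; and then for a general finite $G$-spectrum, which is a retract of $S^{-V}\wedge\Sigma^\infty A$, the type function is the join of two such functions (one for $A$, and the retraction data), but cleanest is: the collection of subsets $\{\,[K]\mid \type{X}([K])\ge n\,\}$ and $\{\,[K]\mid \type{X}([K])\le n\,\}$ are each open, which forces local constancy; and both of these follow from the CW-complex case by taking cofibers/retracts, since $\type{}\ge n$ is equivalent to membership in the prime ideal $P_G(K,n)$, hence closed under cofiber and retract, so it holds for all of $\SH^c_{G,(p)}$ once known for suspension spectra of finite $G$-CW complexes.

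The heart of the argument is therefore the case $X=\Sigma^\infty A$, $A$ a based finite $G$-CW complex. Here I use that $\Phi^K(X)\simeq \Sigma^\infty(A^K)$, so $\type{X}([K])$ is the type of the non-equivariant finite complex $A^K$. The key geometric input is: if $H_i\to H$ in $\Sub(G)$, then by \Cref{prop:mz} we have $H_i$ subconjugate to $H$ for almost all $i$, and moreover — this is the crucial refinement I would extract from the Montgomery–Zippin theorem, or rather from the proof of \Cref{prop:mz} / \cite[Sec.~5.6]{tD79} — for a \emph{fixed} finite $G$-CW complex $A$, the fixed-point space $A^{H_i}$ is $G$-homeomorphic (via an element $g_i\to e$) to $A^{H}$ for almost all $i$: there is a neighbourhood $U$ of $H$ in $\Sub(G)$ and a continuous map $U\to G$, $H'\mapsto g_{H'}$ with $g_H=e$, such that $g_{H'}^{-1}H'g_{H'}\subseteq H$ acts on $A$ with the same fixed points as $H$ — this uses that a finite $G$-CW complex has only finitely many orbit types, so the function $H'\mapsto A^{H'}$ (as a subcomplex, up to the conjugation) is locally constant on $\Sub(G)$. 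Granting this, $A^{H_i}$ and $A^{H}$ are homotopy equivalent for $i\gg 0$, hence have equal type, proving sequential local constancy; since $\Sub(G)/G$ is compact metrizable (\Cref{prop:mz}), sequential local constancy is equivalent to local constancy.

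The main obstacle is the refinement in the previous paragraph: upgrading the Montgomery–Zippin statement "$H_i$ is eventually subconjugate to $H$" to "the conjugating elements can be chosen to converge to the identity, uniformly enough that the fixed-point subcomplexes $A^{H_i}$ stabilize." I would handle this by invoking that a compact Lie group $G$ has, in a neighbourhood of any closed subgroup $H$, a "tube"-type structure: by \cite[Sec.~5.6]{tD79}, for $H'$ close to $H$ there is $g$ close to $e$ with $H'^g\subseteq H$, and the orbit-type decomposition of $A$ is a finite poset, so the assignment $H'\mapsto (A^{H'^g}\text{ as a subcomplex of }A)$ is eventually constant in $H'$. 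Concretely: the set of subgroups $H'$ with $A^{H'^{g}}\supseteq A^H$ for the relevant $g$ is a neighbourhood of $H$ because $A^H$ is built from finitely many cells each of which has an open set of isotropy groups containing it; combined with $H'^g\subseteq H$ giving the reverse inclusion $A^{H'^g}\subseteq A^H$, we get equality on a neighbourhood. Once the fixed-point complexes literally agree (as subcomplexes, after the small conjugation), equality of types is immediate, and the conjugation-invariance of $\type{X}$ — which follows from the conjugation equivalences $\Phi^K\simeq\Phi^{K^g}$ recalled in \Cref{sec:geometricfixed} — shows the function is well-defined on $\Sub(G)/G$ to begin with.
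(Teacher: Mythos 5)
Your overall strategy (reduce via \Cref{lem:finiteg} to suspension spectra of finite $G$-CW complexes, then use finiteness of orbit types together with Montgomery--Zippin) is the same as the paper's, but two steps as written are genuine gaps. First, the reduction from a general finite $G$-spectrum to an honest finite $G$-CW suspension spectrum does not go through. Thickness of $P_G(K,n)$ is a statement about varying $X$ with $K$ fixed; it gives no control on openness in the variable $K$, so ``it holds for all of $\SH^c_{G,(p)}$ by taking cofibers/retracts'' is a non sequitur. Concretely, if $X$ is a retract of $\Sigma^{\infty}B_{(p)}$ then $\type{X}\ge\type{\Sigma^{\infty}B}$ pointwise (your parenthetical ``only lowers the type function'' has the inequality backwards), so $\{K\mid \type{X}(K)\le n\}$ is merely a subset of an open set, which need not be open; and for cofibers there is no useful inequality at all. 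The paper handles the retract by proving something stronger about the ambient complex: for the finitely many isotropy classes $K_1,\dots,K_m,H$ of $\res_H^G B$, the actual restriction map $B^H\to B^K$ is a homeomorphism whenever $K\subseteq H$ is not $H$-subconjugate to any $K_j$, and then the restriction map $A^H\to A^K$ is an equivalence because it is a retract of that homeomorphism after $p$-localization. It is this equivalence of fixed points, not a thick-subcategory argument, that transports local constancy to the retract.

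Second, in the heart of your argument the two inclusions are swapped. From $H'^g\subseteq H$ you get $A^{H}\subseteq A^{H'^g}$ for free (a smaller group fixes more), so the containment you spend effort on is trivial; whereas the containment you claim follows from $H'^g\subseteq H$, namely $A^{H'^g}\subseteq A^H$, is false as an implication and is precisely the nontrivial point. What is needed there is the quantitative step: if $A^{H'^g}\supsetneq A^H$, then $H'^g$ is $H$-subconjugate to one of the finitely many proper isotropy groups $K_j$, which forces $d(H,H')$ to be bounded below by (roughly) $\min_j d(H,K_j)>0$, contradicting $H'\to H$; ``eventually constant because the orbit-type poset is finite'' is an assertion of the conclusion, not a proof, since without the distance bound the fixed-point subcomplex could keep jumping among finitely many values. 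Relatedly, your auxiliary claim that each cell has an open set of isotropy groups containing it is false in $\Sub(G)$ in general (e.g.\ a maximal torus of $SU(2)$ has arbitrarily close conjugates not contained in it), and the continuous selection $H'\mapsto g_{H'}$ is more than Montgomery--Zippin provides and is not needed: as in the paper, it suffices to replace a convergent sequence $H_i\to H$ by conjugates lying inside $H$ and run the distance argument.
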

\begin{proof} 
Let $X$ be a finite $G$-spectrum. By \Cref{lem:finiteg}, $X$ is of the form $S^{-V}\wedge \Sigma^{\infty} A$, where $A$ is a homotopy retract of the $p$-localization of a finite based $G$-CW complex $B$. Since smashing with a representation sphere does not change the type function, we can reduce to showing the proposition in the case where $V=0$ and hence $X\simeq \Sigma^{\infty}A$. We have to show that each $\type{X}^{-1}(n)$ is open. Let $H\in \Sub(G)$ such that $\type{X}(H)=n$. If the class of $H$ did not lie in the interior of $\type{X}^{-1}(n)$, there would be a sequence $H_i$ converging to $H$ with each $H_i\notin \type{X}^{-1}(n)$. By the Montgomery--Zippin theorem, we can assume that the $H_i$ are subgroups of $H$. Now the restriction $\res_H^G B$ to an $H$-space admits the structure of a finite $H$-cell complex, and thus the isotropy of its points consist of only finitely many conjugacy classes of subgroups of $H$. Let $K_1,\hdots,K_m,H$ be a choice of representatives for these conjugacy classes, where the full group $H$ is among the isotropy groups since $\res_H^GB$ has a fixed basepoint. It follows that if $K\subseteq H$ is not $H$-subconjugate to one of the $K_j$, then the restriction map $B^H\to B^K$ is a homeomorphism. Since $A$ is a homotopy retract of the $p$-localization of $B$, this means that for these $K$ the restriction map $A^H\to A^K$ is a homotopy equivalence. In particular, $\type{X}(K)=\type{X}(H)$ for all such $K$, since 
\[ \Phi^K(X)\simeq \Sigma^{\infty}(A^K)\simeq \Sigma^{\infty}(A^H) \simeq \Phi^H(X).\]
Thus, each $H_i$ above must be subconjugate to one of the $K_j$ and consequently some $K_j$, say $K_1$, is superconjugate to infinitely many of the $H_i$. But this implies that $d(H,H_i)\geq d(H,K_1)$ for all these $i$, contradicting the facts that (1) $K_1$ is a proper subgroup of $H$ and (2) the $H_i$ converge to~$H$.
\end{proof}

\begin{remark}
When $G$ is finite, the Hausdorff topology on $\Sub(G)$ is discrete and thus every function on it is automatically locally constant. In contrast to this, it will turn out in \Cref{thm:criterion} below that the topology on $\Sub(G)$ is one of the novel ingredients in the study of the Balmer spectrum for general compact Lie groups. 
\end{remark}

It is not generally true that the type functions of arbitrary tt-ideals $I$ are locally constant, as we will see in \Cref{thm:criterion} and \Cref{ex:notfinitegen}. However, we will need that they can be approximated by type functions $\type{X}$ for $X\in I$, in the following sense:
\begin{prop} \label{prop:approx} Let $G$ be a compact Lie group, $I$ a tt-ideal of $\mathcal{SH}_{G,(p)}^c$, and $f\colon\Sub(G)/G\to \NN$ a locally constant function such that $f\geq \type{I}$. Then there exists an $X\in I$ such that $f\geq \type{X}$.
\end{prop}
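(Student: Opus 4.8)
The plan is to realise $X$ as a finite wedge of objects of $I$, patched together over $\Sub(G)/G$ using compactness together with the local constancy of type functions of finite $G$-spectra proved in \Cref{prop:isotropylocconstant}. Two simple observations underlie this. First, for each closed subgroup $H$ the value $\type{I}(H)=\min\{\type{Y}(H)\mid Y\in I\}$ is attained by some $Y_H\in I$: the set of values is a nonempty subset of $\NN$ (it contains $\type{0}(H)=\infty$) and every nonempty subset of $\NN$ has a least element. Second, for finitely many objects $Y_1,\dots,Y_r\in I$ the wedge $Y_1\vee\cdots\vee Y_r$ again lies in $I$, since $I$ is a triangulated subcategory (apply the cofibre sequence $Y_1\to Y_1\vee Y_2\to Y_2$ inductively), and $\type{Y_1\vee\cdots\vee Y_r}$ is the pointwise minimum $\min_i\type{Y_i}$, because $\Phi^H$ preserves finite wedges and Morava $K$-homology is additive.

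First I would fix a closed subgroup $H$ and choose $Y_H\in I$ with $\type{Y_H}(H)=\type{I}(H)$; by the hypothesis $f\ge \type{I}$ this gives $\type{Y_H}(H)\le f(H)$. Both $\type{Y_H}$ (by \Cref{prop:isotropylocconstant}) and $f$ (by assumption) are locally constant on $\Sub(G)/G$, so there is an open neighbourhood $U_H$ of the conjugacy class of $H$ on which both functions are constant; on $U_H$ we therefore have $\type{Y_H}\equiv\type{Y_H}(H)\le f(H)\equiv f$, that is, $\type{Y_H}(K)\le f(K)$ for every closed subgroup $K$ whose conjugacy class lies in $U_H$. The sets $\{U_H\}$ cover $\Sub(G)/G$, which is compact by \Cref{prop:mz}, so finitely many of them, say $U_{H_1},\dots,U_{H_r}$, already cover it.

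Now set $X=Y_{H_1}\vee\cdots\vee Y_{H_r}$. Then $X\in I$ by the second observation, and for any closed subgroup $K$ we pick $i$ such that the conjugacy class of $K$ lies in $U_{H_i}$ and obtain
\[
\type{X}(K)=\min_{1\le j\le r}\type{Y_{H_j}}(K)\le \type{Y_{H_i}}(K)\le f(K),
\]
so $f\ge\type{X}$, as desired. I do not expect a genuine obstacle here: the argument is a routine compactness patching, and its only substantial input is the (already available) non-trivial \Cref{prop:isotropylocconstant}. The mild points to watch are that the pointwise infimum defining $\type{I}$ is actually attained — automatic, as noted — and that finite wedges of members of $I$ remain in $I$ with the expected type function; the value $f(H)=\infty$ needs no special treatment, since then $\type{Y_H}(H)\le f(H)$ holds for any $Y_H\in I$.
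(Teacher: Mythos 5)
Your argument is correct and is essentially the paper's own proof: choose $Y_H\in I$ with $\type{Y_H}(H)\le f(H)$, use local constancy of $f$ and $\type{Y_H}$ plus compactness of $\Sub(G)/G$ to extract a finite subcover, and take the wedge. The only cosmetic difference is that you insist $Y_H$ attains the minimum $\type{I}(H)$, which is not needed, and you spell out the (correct) routine facts that finite wedges stay in $I$ and have the pointwise-minimum type function.
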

\begin{proof} For every $H\in \Sub(G)/G$ we choose an object $Y_H\in I$ such that $\type{Y_H}(H)\leq f(H)$, which exists by assumption. We note that since both $f$ and $\type{Y_H}$ are locally constant, there exists an open neighbourhood $U_H$ of $H$ on which both $f$ and $\type{Y_H}$ are constant, and hence $\type{Y_H}\leq f$ on that neighbourhood. The open sets $U_H$ cover $\Sub(G)/G$, which is compact. Hence there exists a finite subcover $U_{H_1},\hdots,U_{H_n}$. We now define $X$ via
\[ 
X=\bigvee_{i=1}^n Y_{H_i}. 
\]
Then on each $U_{H_i}$, we have $\type{X}\leq \type{Y_{H_i}}\leq f$. Since the $U_{H_i}$ cover $\Sub(G)/G$, it follows that $\type{X}\leq f$ on all of $\Sub(G)/G$, which shows that $X$ has the desired property. 
\end{proof}

\section{The Balmer topology and classification of tt-ideals} \label{sec:topology}
For finite groups $G$, it is shown in \cite{BS17b} that the Balmer topology on $\Spc(\mathcal{SH}^c_{G,(p)})$ and the classification of tt-ideals can be expressed in terms of the poset structure on the Balmer spectrum. In this section we show that in the case of compact Lie groups, the Balmer topology and the classification of tt-ideals are again determined by the poset structure on the set of prime ideals, provided one also remembers the Hausdorff topology on $\Sub(G)/G$. We will return to the study of the poset structure in \Cref{sec:primeposet}.

We first recall from \Cref{cor:eqsupport} that every tt-ideal $I$ is determined by its type function $\type{I}$. Hence, we obtain a classification of tt-ideals if we can describe which functions $\Sub(G)/G\to \NN$ occur as the type function of a tt-ideal. It turns out that the type functions are detected by the following two properties, the first relying on the poset structure on the Balmer spectrum and the second on the topology on $\Sub(G)/G$:

\begin{definition}
We call a function $f\colon\Sub(G)/G\to \NN$ \emph{admissible} if $P_G(K,f(K))$ is not contained in $P_G(H,f(H)+1)$ for all subgroups $K,H$ with $f(H)<\infty$.
\end{definition}

\begin{definition}
A function $f\colon\Sub(G)/G\to \NN$ is \emph{upper semi-continuous} if $f^{-1}([0,n))$ is open for all $n\in \NN$.
\end{definition}
We have:
\begin{theorem}\label{thm:criterion} Let $G$ be a compact Lie group and  $f\colon\Sub(G)/G\to \NN$ be a function. Then the following hold:
\begin{enumerate}
	\item The function $f$ is of the form $\type{I}$ for some tt-ideal $I$ if and only if it is admissible and upper semi-continuous.
	\item The function $f$ is of the form $\type{X}$ for some finite $G$-spectrum $X$ if and only if it is admissible and locally constant.
\end{enumerate}
\end{theorem}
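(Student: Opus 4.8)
Since $\type{\tti{X}}=\type{X}$, once we prove the admissibility statement of~(1) for every tt-ideal it follows for $\tti{X}$, hence for $\type{X}$, and together with \Cref{prop:isotropylocconstant} this gives the necessity in~(2). So let $I\subseteq\SH^c_{G,(p)}$ be an arbitrary tt-ideal. Since $\type{I}=\min_{Y\in I}\type{Y}$ pointwise, $(\type{I})^{-1}([0,n))=\bigcup_{Y\in I}(\type{Y})^{-1}([0,n))$ is a union of open sets by \Cref{prop:isotropylocconstant}, hence open, so $\type{I}$ is upper semi-continuous. For admissibility, fix $K,H$ with $\type{I}(H)<\infty$. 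The inequality $\type{Y}(K)\ge\type{I}(K)$ for all $Y\in I$ says exactly that $I\subseteq P_G(K,\type{I}(K))$ (with the convention $P_G(K,0)=\SH^c_{G,(p)}$); on the other hand the minimum defining $\type{I}(H)$ is attained by some $X\in I$, and for that $X$ we have $K(\type{I}(H))_*(\Phi^H X)\neq0$, i.e.\ $X\notin P_G(H,\type{I}(H)+1)$. Hence $P_G(K,\type{I}(K))\not\subseteq P_G(H,\type{I}(H)+1)$, as required.

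\textbf{Sufficiency: reduction.} The substantial point is the \emph{realization statement}, namely the `if' direction of~(2): every locally constant admissible $f$ equals $\type{X}$ for some finite $G$-spectrum $X$. I treat this as the main lemma (see the next paragraph), prove it independently, and deduce~(1) from it as follows. First, an admissible upper semi-continuous $f$ is a pointwise infimum $f=\inf_\alpha g_\alpha$ of locally constant admissible functions $g_\alpha\ge f$: for each conjugacy class $[H]$ with $f(H)<\infty$ one produces such a $g_{[H]}$ with $g_{[H]}(H)=f(H)$ by declaring $g_{[H]}=\infty$ outside a suitable clopen neighbourhood of $[H]$ in $\Sub(G)/G$ and, on that neighbourhood, equal to an explicit function built from $f$ and the `shifts' governing the inclusions $P_G(K,n)\subseteq P_G(H,m)$. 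That $g_{[H]}$ can be taken locally constant uses the compactness and total disconnectedness of $\Sub(G)/G$ together with \Cref{prop:mz} (nearby subgroups are subconjugate), and that it is admissible uses the poset structure on the primes; admissibility of the pointwise infimum is then automatic. Realizing each $g_\alpha$ as $\type{X_\alpha}$ by the realization statement and putting $I=\tti{\{X_\alpha\}}$, the identity $\supp(I)=\bigcup_\alpha\supp(X_\alpha)$ gives $\type{I}=\inf_\alpha\type{X_\alpha}=f$; combined with \Cref{cor:eqsupport} this proves the `if' direction of~(1). (As a consistency check: conversely, once~(1) is known one recovers the `if' direction of~(2) via \Cref{prop:approx}---for $f$ locally constant and admissible, choose $I$ with $\type{I}=f$; since $f$ is locally constant and $\ge\type{I}$, \Cref{prop:approx} produces $X\in I$ with $\type{X}\le f$, while $X\in I$ forces $\type{X}\ge\type{I}=f$.)

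\textbf{The realization statement, and the main obstacle.} This is the heart of the matter, and is where the explicit equivariant complexes enter. Since $f$ is locally constant, $\Sub(G)/G$ is a finite disjoint union of clopen pieces on which $f$ is constant, and one builds $X$ from two kinds of block: (i) the `universal' spectra $G/L_+\wedge i_G^*F$ with $F$ a non-equivariant finite type-$n$ spectrum, whose type function is $n$ on the subgroups subconjugate to $L$ and $\infty$ elsewhere; and (ii) induced spectra $\Ind_H^G(Z)$, whose geometric fixed points are governed by \Cref{lem:induced}, where $Z$ is assembled from the partition/Arone--Lesh complexes studied in \cite{BHN^+17} (and, when $H$ admits torus quotients inside $G$, from the Morava $K$-theory form of the Borel localization theorem). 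Since smashing realizes the pointwise maximum of type functions and wedging the pointwise minimum, $X$ is taken to be a suitable smash--wedge combination of such blocks, and admissibility of $f$ is exactly what makes the prescribed values simultaneously attainable---in particular the `blue-shift' jumps $f(K)>f(H)$ for $K$ subconjugate to $H$, which are invisible to the blocks in~(i) and require the induced complexes in~(ii). Arranging this combination so that the type function of $X$ comes out \emph{exactly} $f$ on every clopen piece (rather than merely bounded above or below by it), and doing so uniformly over all compact Lie groups by reducing, as in the later sections, to the case of an extension of a finite $p$-group by an abelian compact Lie group, is the technical core of the argument; I expect this assembly---rather than any single ingredient---to be the main obstacle.
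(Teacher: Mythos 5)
Your necessity argument is fine and agrees with the paper's. The problem is the sufficiency direction, which is the actual content of the theorem and which you have not established: you explicitly defer the realization statement (the `if' direction of (2)) and call its assembly "the main obstacle", and the route you sketch for it cannot work at the stated level of generality. Building $X$ from blocks like $G/L_+\wedge i_G^*F$, induced Arone--Lesh complexes, and the Borel localization theorem is exactly the kind of input needed to determine \emph{which} inclusions among the primes $P_G(H,n)$ actually hold (\Cref{thm:inclusions}), a problem the paper only resolves for abelian groups and away from $|G|$ and which is open in general; but \Cref{thm:criterion} must hold for every compact Lie group and every prime, with admissibility used as a black box. The key idea you are missing is that admissibility itself already supplies the building blocks: $P_G(K,f(K))\not\subseteq P_G(H,f(H)+1)$ means, by the very definition of these primes, that there exists a finite $G$-spectrum $Y$ with $\type{Y}(K)\ge f(K)$ and $\type{Y}(H)\le f(H)$. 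The only genuinely new work is \Cref{lem:shift}: one raises the value at $H$ to exactly $f(H)$ without lowering it at $K$, by taking a $v_n$-self map of $\Phi^H(Y)$ (Hopkins--Smith periodicity), lifting it to a $G$-map $\Sigma^k Y\to Y\wedge S^V$ via \Cref{lem:lift}, coning off, and inducing up with \Cref{lem:induced} when $H\neq G$. With this in hand, for each $H$ one smashes finitely many such witnesses over a cover of $\Sub(G)/G$ (compactness, upper semi-continuity of $f$, local constancy of type functions) to get $X_H$ with $\type{X_H}(H)=f(H)$ and $\type{X_H}\ge f$, and $I=\tti{\{X_H\mid H\}}$ realizes $f$. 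Note the paper's logical order is the reverse of yours: (1) is proved first, and (2) is deduced from (1) via \Cref{prop:approx} --- exactly your parenthetical "consistency check".

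Even granting the realization statement, your proposed reduction of (1) to (2) has a concrete gap: declaring $g_{[H]}=\infty$ outside a clopen neighbourhood of $[H]$ can destroy admissibility, because $P_G(K,\infty)$ may be contained in $P_G(H',m)$ for \emph{finite} $m$ (for instance when $K$ is cotoral in $H'$, by \Cref{thm:inclusions}). Already for $G=\T$ this bites: every clopen neighbourhood of $[\T]$ in $\Sub(\T)$ omits some finite subgroup $C_n$, and a function with $g(C_n)=\infty$ and $g(\T)=1$ is inadmissible since $P_\T(C_n,\infty)\subseteq P_\T(\T,2)$. So the family of locally constant admissible majorants with prescribed value at $[H]$ cannot be produced by the recipe you describe; constructing it correctly would again require controlling the interaction between the poset structure and the topology on $\Sub(G)/G$, which is precisely what the paper's argument avoids by working with the tt-ideal directly.
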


\begin{remark} 
We will see later in \Cref{lem:admissible} that when $G$ is abelian every admissible function is automatically upper semi-continuous, but this is not the case in general, for example for $G=O(2)$, see \Cref{rem:o2}.
\end{remark}

\begin{example} \label{ex:notfinitegen}
An example of a tt-ideal $I$ that is not finitely-generated is the prime ideal $\wp=P_\T(\T,1)$ for the circle group $\T$. For any finite subgroup $C_n$ of $\T$, the finite $\T$-spectrum $\Sigma^{\infty}(\T/C_n)_+$ has trivial $\T$-geometric fixed points, hence it is contained in $\wp$, but its $C_n$-geometric fixed points have type $0$. It follows that the type function $\type{\wp}$ sends all finite subgroups $C_n$ to $0$ and the full group $\T$ to $1$, hence it is not locally constant.
\end{example}

Before we prove the theorem, we need two preparatory lemmas:

\begin{lemma} \label{lem:lift} Let $X$ and $Y$ be finite $G$-spectra and $f\colon\Phi^G(X)\to \Phi^G(Y)$ a map between their geometric fixed points. Then there exists a $G$-representation $V$ satisfying $V^G=0$ and a $G$-map $\widetilde{f}\colon X\to Y\wedge S^{V}$ such that
\[ 
\Phi^G(\widetilde{f})\colon \Phi^G(X)\to \Phi^G(Y\wedge S^V)\simeq \Phi^G(Y) 
\]
equals $f$.
\end{lemma}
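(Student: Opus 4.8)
The plan is to identify the map $[X,Y\wedge S^V]^G\to[\Phi^G X,\Phi^G Y]$ induced by $\Phi^G$ with an isomorphism in the colimit over all such $V$, and then to exploit the compactness of $X$ to factor through a single stage. First I would recall the standard model for geometric fixed points: $\Phi^G(Z)\simeq(Z\wedge\widetilde{E\mathcal{P}})^G$, where $\widetilde{E\mathcal{P}}$ is the cofiber of $E\mathcal{P}_+\to S^0$ for $\mathcal{P}$ the family of proper closed subgroups and $(-)^G$ is the categorical fixed point functor, and that $\widetilde{E\mathcal{P}}$ is equivalent to the filtered colimit $\colim_V S^V$ over the finite-dimensional real $G$-representations $V$ with $V^G=0$, ordered by inclusion with transition maps $S^V\hookrightarrow S^{V'}$. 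The one step here requiring an argument is that this colimit has the correct fixed points: $(\colim_V S^V)^H=\colim_V S^{V^H}$ equals $S^0$ for $H=G$ and is contractible for $H$ proper, the latter because for a proper closed subgroup $H$ there is a real $G$-representation $V_0$ with $V_0^G=0$ and $V_0^H\neq 0$ --- for instance the underlying real representation of a non-trivial irreducible $G$-summand of $C(G/H)$, which is non-zero by Peter--Weyl since $|G/H|>1$ and has a non-zero $H$-fixed vector by evaluation at the base coset --- so that $\dim V^H\to\infty$ cofinally. In particular $\res_H^G\widetilde{E\mathcal{P}}\simeq 0$ for every proper $H$, and the unit $\mathbb{S}\to\Phi^G(\widetilde{E\mathcal{P}})$ is an equivalence.

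The key step is then to show that for all $G$-spectra $A,B$ the natural graded homomorphism
\[
\theta_A\colon[A,\,B\wedge\widetilde{E\mathcal{P}}]^G_{*}\longrightarrow[\Phi^G(A),\,\Phi^G(B)]_{*},\qquad g\longmapsto\Phi^G(g),
\]
is an isomorphism; here I use the identification $\Phi^G(B\wedge\widetilde{E\mathcal{P}})\simeq\Phi^G(B)\wedge\Phi^G(\widetilde{E\mathcal{P}})\simeq\Phi^G(B)$ from the previous paragraph. Both sides are homological in $A$ and convert coproducts into products, since $\Phi^G$ is exact and coproduct-preserving, so the full subcategory of those $A$ for which $\theta_A$ is an isomorphism is localizing; as it contains the compact generators $G/H_+$, it is all of $\SH_{G,(p)}$. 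For $H$ proper, both groups vanish: the source because $\res_H^G(B\wedge\widetilde{E\mathcal{P}})\simeq 0$, the target because $\Phi^G(G/H_+)\simeq\Sigma^\infty\big((G/H)^G_+\big)\simeq 0$ as $(G/H)^G=\varnothing$. For $H=G$ one has $\Phi^G(G/G_+)\simeq\mathbb{S}$, and $\theta_{G/G_+}$ is precisely the comparison map $\pi^G_*(B\wedge\widetilde{E\mathcal{P}})\to\pi_*\Phi^G(B\wedge\widetilde{E\mathcal{P}})$, which is an isomorphism because $B\wedge\widetilde{E\mathcal{P}}$ is already $\widetilde{E\mathcal{P}}$-local (its restriction to every proper subgroup is contractible), so that $\Phi^G$ of it is computed by its categorical $G$-fixed points.

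Granting this, the lemma follows quickly. Given $f\colon\Phi^G(X)\to\Phi^G(Y)$, applying $\theta_X^{-1}$ with $B=Y$ produces a $G$-map $g\colon X\to Y\wedge\widetilde{E\mathcal{P}}$ with $\Phi^G(g)=f$. Since $X$ is compact and $Y\wedge\widetilde{E\mathcal{P}}\simeq\colim_V Y\wedge S^V$ is a filtered colimit, $g$ factors as $X\xra{\widetilde{f}}Y\wedge S^V\xra{\iota_V}\colim_V Y\wedge S^V$ for some $V$ with $V^G=0$, where $\iota_V$ is the structure map into the colimit; this is the desired $V$ and $\widetilde{f}$. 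Applying $\Phi^G$ and using $\Phi^G(Y\wedge S^V)\simeq\Phi^G(Y)\wedge S^{V^G}=\Phi^G(Y)$ one gets $\Phi^G(\iota_V)\circ\Phi^G(\widetilde{f})=\Phi^G(g)=f$, and $\Phi^G(\iota_V)=\id_{\Phi^G(Y)}$ because $\Phi^G$ carries $\colim_V Y\wedge S^V$ to the colimit of the constant diagram $V\mapsto\Phi^G(Y)$ along identity maps; hence $\Phi^G(\widetilde{f})=f$.

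The main obstacle is the first paragraph: assembling the background on $\Phi^G$ for a compact Lie group, in particular the model $\widetilde{E\mathcal{P}}\simeq\colim_V S^V$ (where Peter--Weyl enters to guarantee enough representations with trivial $G$-fixed points but non-trivial $H$-fixed points) and the fact that on $\widetilde{E\mathcal{P}}$-local spectra $\Phi^G$ agrees with categorical fixed points. Everything after that is formal bookkeeping with compact generators and filtered colimits.
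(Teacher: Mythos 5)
Your argument is correct and follows essentially the same route as the paper: identify $[X,\widetilde{E}\mathcal{P}\wedge Y]^G\cong[\Phi^G(X),\Phi^G(Y)]$, model $\widetilde{E}\mathcal{P}$ as the filtered (homotopy) colimit of $S^V$ over representations with $V^G=0$ (using the existence, for each proper closed $H$, of $V$ with $V^G=0$ and $V^H\neq 0$), and use compactness of $X$ to factor through a finite stage. The only difference is that where the paper cites the smashing-localization statement from the literature for the key isomorphism, you prove it directly by checking the comparison map on the compact generators $G/H_+$, which is a fine (if slightly longer) substitute.
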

\begin{proof} 
There is a natural isomorphism (for which finiteness of $X$ is not required)
\[ [\Phi^G(X),\Phi^G(Y)]\cong [X,\widetilde{E}P\wedge Y]^G, \]
where $\widetilde{E}P$ is the cofiber of the map $EP_+\to S^0$ and $EP$ is a universal space for the family of proper subgroups of $G$. This isomorphism is a consequence of the fact that $\widetilde{E}P\wedge -$ is a smashing localization on the homotopy category of $G$-spectra, whose category of local objects is equivalent to the homotopy category of non-equivariant spectra via taking geometric fixed points, see \cite[Cor. II.9.6]{LMSM86}, \cite[Thm. XVI.6.6]{may_equivariant96} or \cite[Thm. 6.11]{MNN17}. The isomorphism assigns to a $G$-map $X\to \widetilde{E}P\wedge Y$ its induced map on geometric fixed points, using that $\Phi^G(\widetilde{E}P)\simeq S^0$. A model for $\widetilde{E}P$ is given by the homotopy colimit over $S^V$, where $V$ runs through the poset of all finite dimensional subrepresentations of a complete $G$-universe $\mathcal{U}_G$ which have trivial $G$-fixed points; this is a consequence of the fact that for every proper closed subgroup $H$ of $G$ there exists a $G$-representation $V$ such that $V^G=0$ but $V^H\neq 0$, see \cite[Prop. III.4.2]{BtD85}. Hence,
\[ [\Phi^G(X),\Phi^G(Y)]\cong [X,\hocolim_{V\subset \mathcal{U}_G,V^G=0} (S^V\wedge Y)]^G. \]
Now if $f\colon\Phi^G(X)\to \Phi^G(Y)$ is given as above, it corresponds to a $G$-map 
\[ F\colon X\to \hocolim_{V\subset \mathcal{U}_G,V^G=0} (S^V\wedge Y).\]
Since $X$ is finite, the map $F$ factors through a finite stage $S^V\wedge Y$, yielding the desired~$\widetilde{f}$.
\end{proof}

\begin{lemma} \label{lem:shift} Let $K$ and $H$ be subgroups of $G$, and $n\in \NN$ and $m\in \N$. If $P_G(K,n)$ is not contained in $P_G(H,m+1)$, then there exists a finite $G$-spectrum $X$ such that $\type{X}(K)\geq n$ and $\type{X}(H)=m$.
\end{lemma}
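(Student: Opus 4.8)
The plan is to read a witness straight out of the hypothesis and then correct it. Since $P_G(K,n)\not\subseteq P_G(H,m+1)$, there is a finite $G$-spectrum $Z$ with $Z\in P_G(K,n)$ and $Z\notin P_G(H,m+1)$. Unravelling the definitions and invoking Ravenel's theorem \cite[Thm.~2.11]{Rav84}, this says precisely that $\type{Z}(K)\geq n$ and $\type{Z}(H)\leq m$. So $Z$ already has the desired behaviour at $K$; the only defect is that its type at $H$ may be strictly smaller than $m$.

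To repair this without disturbing the value at $K$, I would smash $Z$ with a spectrum having type $m$ \emph{uniformly} across all subgroups. Concretely, pick a non-equivariant finite $p$-local spectrum $F$ of type exactly $m$ — this exists by Step (2) of \Cref{ssec:classicaltst} (Mitchell's construction \cite{Mit85}), taking $F$ to be the $p$-local sphere when $m=0$ — and form its inflation $i_G^*(F)$. By the compatibility \eqref{eq:inflation} of geometric fixed points with pullback, $\Phi^J(i_G^*(F))\simeq F$ for every closed subgroup $J\subseteq G$, so $\type{i_G^*(F)}$ is the constant function $m$. Set $X=Z\wedge i_G^*(F)$.

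It then remains to compute $\type{X}$. Because each $\Phi^J$ is symmetric monoidal, $\Phi^J(X)\simeq \Phi^J(Z)\wedge F$; and for finite non-equivariant spectra the type of a smash product is the maximum of the two types — this follows from the Künneth isomorphism for Morava $K$-theory together with the fact that, by \cite[Thm.~2.11]{Rav84}, the set of $j$ for which $K(j)_*$ is nonzero on a given finite spectrum is an up-set. Hence $\type{X}(J)=\max(\type{Z}(J),m)$ for all $J$, which gives $\type{X}(K)\geq\type{Z}(K)\geq n$ and $\type{X}(H)=\max(\type{Z}(H),m)=m$, using $\type{Z}(H)\leq m$.

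I do not expect a genuine obstacle here: the content of the lemma lies entirely in producing the witness $Z$, which is exactly what the non-inclusion hypothesis supplies, and the remainder is the soft observation that smashing can only raise type functions while inflation realises a prescribed type uniformly. The one point to watch is that $m\in\N$ is finite, so that a type-$m$ spectrum $F$ exists (this is where the hypothesis "$m\in\N$" rather than "$m\in\NN$" is used), whereas $n$ may be $\infty$, which the $\max$ formula accommodates automatically.
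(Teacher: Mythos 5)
Your argument is correct, and it is genuinely different from (and shorter than) the paper's proof. You take the witness $Z$ of the non-inclusion, so that $\type{Z}(K)\ge n$ and $\type{Z}(H)\le m$, and repair it in a single step by smashing with $i_G^*(F)$ for $F$ a type-$m$ complex \cite{Mit85}: since each $\Phi^J$ is symmetric monoidal and $\Phi^J(i_G^*F)\simeq F$ for every closed $J$ by \eqref{eq:inflation}, the K\"unneth isomorphism for $K(j)$ together with Ravenel's theorem gives $\type{X}(J)=\max(\type{Z}(J),m)$ pointwise, which immediately yields $\type{X}(K)\ge n$ and $\type{X}(H)=m$; this ``type of a smash is the max of the types'' fact is already used elsewhere in the paper (proofs of \Cref{prop:changeofgroups} and \Cref{thm:criterion}), and your two caveats (finiteness of $m$ for $F$ to exist; $K(m)_*\Phi^H(Z)\neq 0$ via upward non-vanishing) are exactly the right ones. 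The paper instead raises $\type{Y}(H)$ one step at a time: it chooses a $v_{\type{Y}(H)}$-self map of $\Phi^H(\res_H^G Y)$ via the periodicity theorem \cite[Thm.~9]{HS98}, lifts it to a genuine $H$-map up to a representation sphere using \Cref{lem:lift}, takes the cofiber, and, when $H$ is proper, induces back up to $G$ via \Cref{lem:induced}, with membership in $\tti{Y}$ guaranteeing the value at $K$ does not drop. Your route avoids the periodicity theorem, \Cref{lem:lift} and \Cref{lem:induced} altogether for this lemma, and in fact gives finer output (the whole type function $\max(\type{Z},-)$ is controlled, not just its values at $K$ and $H$, and $X\in\tti{Z}$ just as in the paper); the paper's cofiber-and-induct technique is the one that generalises when one wants to change the value at $H$ by a prescribed amount relative to $\type{Y}(H)$ rather than to pad from below, but for the statement as given your construction suffices.
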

\begin{proof} Since $P_G(K,n)$ is not contained in $P_G(H,m+1)$, there exists a finite $G$-spectrum $Y$ such that $\type{Y}(K)\geq n$ and $\type{Y}(H)\leq m$. If $\type{Y}(H)=m$, we are done. If not, we will explain how to use $Y$ to construct another finite $G$-spectrum $Y'$ such that $\type{Y'}(K)\geq n$ and $\type{Y'}(H)=\type{Y}(H)+1$. Iterating this process leads to the desired finite $G$-spectrum $X$.

The construction of $Y'$ goes as follows. We first consider the case where $H$ equals the ambient group $G$. By the periodicity theorem of Hopkins and Smith \cite[Thm.~9]{HS98}, there exists a $v_{\type{Y}(G)}$-self map $f\colon\Sigma^{k} \Phi^G(Y)\to \Phi^G(Y)$, i.e., a self-map which induces an isomorphism on $K(\type{Y}(G))_*$ and is trivial on all other Morava $K$-theories. Applying \Cref{lem:lift} to this $f$, we obtain a $G$-map $\widetilde{f}\colon\Sigma^{k} Y\to Y\wedge S^V$ for some $G$-representation $V$ with trivial fixed points, such that the geometric fixed points~$\Phi^H(\widetilde{f})$ give back $f$. Then the cofiber $Y'=\cone(\widetilde{f})$ has the desired properties: The geometric fixed points $\Phi^G(Y')$ are given by the cofiber of $f$, whose type equals $\type{Y}(G)+1$, since $f$ is a $v_{\type{Y}(G)}$-self map. Furthermore, since $Y'$ lies in the tt-ideal generated by $Y$, the type of $\Phi^K(Y')$ cannot be smaller than that of $\Phi^K(Y)$.

If $H$ is a proper subgroup of $G$, one repeats the same argument for $G=H$ applied to the restriction $\res_H^G(Y)$. This leads to a finite $H$-spectrum $\widetilde{Y}$, defined as the cofiber of an $H$-map $\widetilde{f}\colon\res_H^G(Y)\to \res_H^G(Y)\wedge S^V$, where $V$ is an $H$-representation with trivial fixed points. By the same argument as in the absolute case, the type of its $H$-geometric fixed points is given by $\type{Y}(H)+1$, and the type of $\Phi^J(\widetilde{Y})$ is at least that of $\Phi^J(Y)$ for all other subgroups $J$ of $H$. Let $Y'=\Ind_H^G(\widetilde{Y})$. Then \Cref{lem:induced} shows that $\type{Y'}(H)=\type{Y}(H)+1$, and that 
\begin{align*} \type{Y'}(K)& \geq \min(\type{\widetilde{Y}}(K')\ |\ K'\subseteq H, K'\sim K) \\ & \geq \min(\type{Y}(K') |\ K'\subseteq H, K'\sim K) \\
& =\type{Y}(K), \end{align*}
where the last equality follows from the fact that $\type{Y}$ takes the same value on all $K'$ that are conjugate to $K$, because $Y$ is a $G$-spectrum. Hence $Y'$ has the desired property, which finishes the proof.
\end{proof}

\begin{proof}[Proof of \Cref{thm:criterion}] We start by showing that if $I$ is a tt-ideal in $\mathcal{SH}_{G,(p)}^c$, then the type function $\type{I}$ is admissible and upper semi-continuous. Let $H$ and $K$ be two closed subgroups of $G$ such that $\type{I}(H)<\infty$. Further, let $X\in I$ be a finite $G$-spectrum with $\type{X}(H)=\type{I}(H)$. Then $X\notin P_G(H,\type{I}(H)+1)$, but $X\in I\subseteq P_G(K,\type{I}(K))$. Thus, $P_G(K,\type{I}(K))$ cannot be contained in $P_G(H,\type{I}(H)+1)$ and hence $\type{I}$ is admissible. Furthermore, by \Cref{prop:isotropylocconstant} we know that $\type{X}$ is locally constant for every finite $G$-spectrum $X$, thus $\type{X}^{-1}([0,n))$ is in particular open, for all $n\in \NN$. Hence, 
\[ 
\type{I}^{-1}([0,n))=\bigcup_{X\in I}(\type{X}^{-1}([0,n)) 
\]
is also open, and thus $\type{I}$ is upper semi-continuous.

Now let $f\colon \Sub(G)/G\to \NN$ be an admissible and upper semi-continuous function. We wish to construct a tt-ideal $I$ such that $\type{I}=f$. For this we let $H$ be a closed subgroup of $G$. Our first aim is to construct an $X_H\in \mathcal{SH}^c_{G,(p)}$ such that $\type{X_H}(H)=f(H)$ and $\type{X_H}(K)\geq f(K)$ for all other subgroups $K$ of $G$. We first fix such a $K$ in addition to $H$. Since $P_G(K,f(K))$ is not contained in $P_G(H,f(H)+1)$, \Cref{lem:shift} shows that there exists an $X_{H,K}\in \mathcal{SH}^c_{G,(p)}$ such that $\type{X_{H,K}}(H)=f(H)$ and $\type{X_{H,K}}(K)\geq f(K)$. Since $f$ is upper semi-continuous and $\type{X_{H,K}}$ is locally constant (\Cref{prop:isotropylocconstant}), there is an open neighbourhood $U_{H,K}$ of $K$ on which $\type{X_{H,K}}$ is constant with value $\type{X_{H,K}}(K)=f(K)$ and $f$ is bounded above by $f(K)$. The space $\Sub(G)/G$ is compact, thus it is covered by finitely many $U_{H,K_1},\hdots,U_{H,K_n}$. We then define $X_H$ as the smash product
\[ 
X_H=X_{H,K_1}\wedge \hdots \wedge X_{H,K_n}. 
\]
Then $\type{X_H}(K)=\max\{\type{X_{H,K_i}}(K)\mid  i=1,\hdots,n\}$. Since $\type{X_{H,K_i}}(H)=f(H)$ for all $i$, it follows that also $\type{X_H}(H)=f(H)$. Moreover, for $K\in U_{H,K_i}$, we have $\type{X_{H,K_i}}(K)= f(K_i)\geq f(K)$. Thus, $\type{X_H}(K)\geq f(K)$. The $U_{H,K_i}$ cover $\Sub(G)/G$, so it follows that $\type{X_H}\geq f$, as desired. Now we set 
\[ 
I=\tti{\{ X_H\mid  H\in \Sub(G)/G\}}. 
\]
Then $\type{I}\geq f$, since $\type{X_H}\geq f$ for all $H$. On the other hand we have $\type{X_H}=f(H)$, hence $\type{I}\leq f$. Therefore $\type{I}$ is equal to $f$, which concludes the proof of the first part of the theorem. 

For the second part of the theorem we need to show that a function $f\colon \Sub(G)/G\to \NN$ is equal to $\type{X}$ for some finite $G$-spectrum $X$ if and only if it is admissible and locally constant. We first show the `only if' direction. We know that $\type{X}$ is locally constant for every finite $G$-spectrum $X$, again by \Cref{prop:isotropylocconstant}. From the first part of the theorem it follows that $\type{X}$ is also admissible, since $\type{X}=\type{\tti{X}}$. For the other direction let $f\colon \Sub(G)/G\to \NN$ be an admissible and locally constant function. Again using the first part it follows that there exists a tt-ideal $I$ such that $f=\type{I}$, since locally constant in particular implies upper semi-continuous. We can now apply \Cref{prop:approx} to $I$ and the locally constant function $\type{I}$, and see that there is an $X\in I$ such that $\type{X}\leq \type{I}$. But, $X\in I$ also implies $\type{X}\geq \type{I}$. Hence, $\type{X}=\type{I}=f$, which concludes the proof.
\end{proof}

We obtain the following characterization of the Balmer topology on $\Spc(\mathcal{SH}^c_{G,(p)})$: 
\begin{cor}\label{cor:topbasis} 
For $G$ a compact Lie group, the Balmer topology on $\Spc(\mathcal{SH}^c_{G,(p)})$ has as a basis the open sets
\[ 
\{ P_G(H,n)\mid  n\leq f(H) \},
\]
where $f$ ranges through all locally constant admissible functions $\Sub(G)/G\to \NN$.
\end{cor}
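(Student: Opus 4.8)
The plan is to show that the proposed family of sets coincides with the family $\{\Spc(\mathcal{SH}^c_{G,(p)}) \setminus \supp(X)\}$, where $X$ ranges over finite $G$-spectra, and then to invoke the elementary fact that this latter family is a basis for the Balmer topology.

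First I would recall why the complements of supports form a basis. By definition the Balmer topology is generated by the closed sets $\supp(X)$; since $\supp(X \vee Y) = \supp(X) \cup \supp(Y)$ for finite $G$-spectra $X, Y$ (and the wedge of two finite $G$-spectra is again finite), every finite union of such closed sets is again of the form $\supp(-)$, so every closed set is an intersection of sets $\supp(X)$. Passing to complements, every open set is a union of sets of the form $\Spc(\mathcal{SH}^c_{G,(p)}) \setminus \supp(X)$, which is precisely the statement that these sets form a basis.

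Next I would make the identification explicit. Using the definition $P_G(H,n) = \{X \mid K(n-1)_*(\Phi^H(X)) = 0\}$ together with Ravenel's theorem (recorded after \Cref{prop:eqsupport}), one has $X \in P_G(H,n)$ if and only if $\type{X}(H) \ge n$. Combined with \Cref{thm:underlyingset}, which enumerates the primes as $P_G(H,n)$ with $(H,n) \in (\Sub(G)/G) \times \NN_{>0}$, this yields
\[
\Spc(\mathcal{SH}^c_{G,(p)}) \setminus \supp(X) \;=\; \{\wp \mid X \in \wp\} \;=\; \{P_G(H,n) \mid n \le \type{X}(H)\}.
\]
Now \Cref{prop:isotropylocconstant} says $\type{X}$ is locally constant, and \Cref{thm:criterion}(2) (the ``only if'' direction, using $\type{X} = \type{\tti{X}}$) says it is admissible; hence each basic open set on the left appears in the list on the right of the Corollary. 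Conversely, given any locally constant admissible $f \colon \Sub(G)/G \to \NN$, \Cref{thm:criterion}(2) (the ``if'' direction) furnishes a finite $G$-spectrum $X$ with $\type{X} = f$, so the set $\{P_G(H,n) \mid n \le f(H)\}$ equals $\Spc(\mathcal{SH}^c_{G,(p)}) \setminus \supp(X)$ and is in particular open. The two families therefore agree, and the Corollary follows.

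I do not expect a genuine obstacle: once \Cref{thm:criterion} is available this is essentially bookkeeping. The only points demanding a little care are the direction of the inequality in the translation $X \in P_G(H,n) \Leftrightarrow \type{X}(H) \ge n$, and the (standard but worth stating) remark that the complements of supports form a \emph{basis}, not merely a generating set of open sets, which is exactly what the identity $\supp(X \vee Y) = \supp(X) \cup \supp(Y)$ delivers.
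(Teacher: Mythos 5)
Your argument is correct and is essentially the paper's own proof: identify each basic open $U_X=\{\wp\mid X\in\wp\}$ with $\{P_G(H,n)\mid n\le \type{X}(H)\}$ via \Cref{thm:underlyingset} and the translation $X\in P_G(H,n)\Leftrightarrow \type{X}(H)\ge n$, then use \Cref{thm:criterion}(2) to see that $\type{X}$ ranges exactly over the locally constant admissible functions. Your extra remarks (citing Ravenel for the translation and $\supp(X\vee Y)=\supp(X)\cup\supp(Y)$ for the basis claim) only make explicit what the paper treats as definitional.
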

\begin{proof} By definition, the Balmer topology has as basis the open sets 
\[ 
U_X=\{ \wp \in \Spc(\mathcal{SH}^c_{G,(p)})\mid  X\in \wp \}, 
\]
where $X$ ranges through all finite $G$-spectra. By \Cref{thm:underlyingset}, all prime ideals are of the form $P_G(H,n)$. Moreover, by definition, $X$ lies in $P_G(H,n)$ if and only if $n\leq \type{X}(H)$. Thus we have
\[ 
U_X= \{ P_G(H,n)\mid  n\leq \type{X}(H) \}. 
\]
Hence, $U_X$ depends only on the function $\type{X}$. By the previous theorem, we know that $\type{X}$ ranges through all admissible locally constant functions. This finishes the proof.
\end{proof}

Since admissibility only depends on the poset structure of $\Spc(\mathcal{SH}^c_{G,(p)})$, this shows that the Balmer topology is indeed determined by the inclusions among prime ideals together with the topology on $\Sub(G)/G$, as claimed.

\section{Inclusions of prime ideals}\label{sec:primeposet}

We continue to let $G$ be a compact Lie group and work $p$-locally. In order to obtain a complete description of the topology on the Balmer spectrum $\Spc(\mathcal{SH}_{G,(p)}^c)$ and a classification of the tt-ideals, we are hence left to determine the poset structure on $\Spc(\mathcal{SH}_{G,(p)}^c)$. By \Cref{lem:primerelations}, we know that a prime $P_G(K,m)$ can only be contained in $P_G(H,n)$ if $K$ is conjugate to a subgroup of $H$ and $m\geq n$. Moreover, if $P_G(K,m)\subseteq P_G(H,n)$, then also $P_G(K,m+1)\subseteq P_G(H,n)$, since $P_G(K,m+1)\subseteq P_G(K,m)$. The remaining problem can thus be phrased as the following question:

\begin{question} \label{quest} Given an inclusion of subgroups $K\subseteq H$ and $n\in \NN_{>0}$, what is the minimal $i\in \NN$, if any, for which there is an inclusion 
\[ 
P_G(K,n+i)\subseteq P_G(H,n)? 
\]
Phrased more concretely, given a finite $G$-spectrum $X$ such that $\Phi^K(X)$ is of type $\geq n+i$, is $\Phi^H(X)$ necessarily of type $\geq n$?
\end{question}

This problem appears to be difficult and is open in general, already for finite groups $G$ where it is linked to the blue-shift phenomenon for generalized Tate constructions \cite[Sec.~9]{BS17b}. In this section we show that the question can be reduced to the case where $K\subseteq H$ is a `$p$-subcotoral' inclusion. We also give lower and upper bounds for the minimal $i$ in \Cref{quest}, building on the previously obtained bounds for finite groups from \cite{BS17b} and \cite{BHN^+17}.
In \Cref{sec:abelian} we then use these bounds to derive a complete classification for abelian compact Lie groups. For a general compact Lie group $G$, we obtain an answer away from finitely many critical primes $p$, those that divide the `order' $|G|$ of $G$, as we explain in \Cref{sec:order}.

We first need to introduce some terminology. An inclusion $K\subseteq H$ of subgroups of $G$ is said to be \emph{cotoral} if it is normal and the quotient is a torus. For every subgroup $K$ there exists a maximal cotoral inclusion $K\subseteq \omega_G(K)$ in $G$, unique up to conjugacy. Moreover, the group $\omega_G(K)$ has finite Weyl group and is hence cotorally maximal, i.e., there exists no proper cotoral inclusion of the form $\omega_G(K)\subset H$. In fact, the function $K\mapsto \omega_G(K)$ defines a continuous retraction of $\Sub(G)/G$ onto the closed subspace of conjugacy classes of subgroups with finite Weyl group (see \cite[Def.~1.2f.]{FO05} for a discussion of all these properties). Given a subgroup $H$ of $G$, we further write $\mathcal{O}(H)$ for the minimal normal subgroup of $H$ with quotient a finite $p$-group. We say that a subgroup $K\subseteq H$ is \emph{$p$-subcotoral} if $\mathcal{O}(K)$ sits inside $\mathcal{O}(H)$ and the inclusion $\mathcal{O}(K)\subseteq \mathcal{O}(H)$ is cotoral. When $G$ is finite, this notion reduces to the notion of $p$-subnormal subgroup studied by Balmer and Sanders, see \cite[Lem.~3.3]{BS17b}.

Then the following theorem summarizes what we know about inclusions of primes: 

\begin{theorem} \label{thm:inclusions} The following hold for subgroups $K$ and $H$ of $G$ and $n,m \in \NN_{>0}$:
\begin{enumerate}[i)]
	\item If there is an inclusion $P_G(K,m)\subseteq P_G(H,n)$, then $K$ is $G$-conjugate to a $p$-subcotoral subgroup of $H$ and $m\geq n$.
	\item If there is an inclusion $P_G(K,1)\subseteq P_G(H,1)$, then $K$ is $G$-conjugate to a cotoral subgroup of $H$.
	\item	If $K$ is $G$-conjugate to a cotoral subgroup of $H$, then $P_G(K,n)\subseteq P_G(H,n)$ for all $n$.
	\item If $K$ is $G$-conjugate to a normal subgroup of $H$ with quotient a cyclic $p$-group, then $P_G(K,n+1)\subseteq P_G(H,n)$ for all $n$.
	\item	If $K$ is $G$-conjugate to a $p$-subcotoral subgroup of $H$, then $P_G(K,\infty)\subseteq P_G(H,\infty)$.
	\item If $K\subseteq H$ is normal in $G$ and $H/K$ is an elementary abelian $p$-group of rank $k$, then \[ P_G(K,n+k-1) \not\subseteq P_G(H,n) \]
for all $k,n\in \N$.
\end{enumerate}
\end{theorem}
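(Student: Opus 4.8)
The plan is to unwind the definition and then reduce, in two formal steps, to the finite elementary abelian case established in \cite{BHN^+17}. By definition, $P_G(K,n+k-1)\not\subseteq P_G(H,n)$ says exactly that there is a finite $G$-spectrum $X$ with $\type{X}(K)\geq n+k-1$ and $\type{X}(H)\leq n-1$; we may assume $n\geq 1$ and $k\geq 1$, the remaining degenerate cases being immediate. So I need to construct such an $X$.

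\emph{Step 1: reduction to $G=H$.} First I would show it is enough to treat the case $G=H$. Suppose $Y$ is a finite $H$-spectrum with $\type{Y}(K)\geq n+k-1$ and $\type{Y}(H)\leq n-1$, and put $X=\Ind_H^G(Y)$. Part i) of \Cref{lem:induced} gives $\type{\Phi^H(X)}=\type{\Phi^H(Y)}\leq n-1$, while part ii) gives that $\type{\Phi^K(X)}$ is at least the minimum of $\type{\Phi^{K'}(Y)}$ over subgroups $K'\subseteq H$ that are $G$-conjugate to $K$. Since $K$ is normal in $G$, the only such $K'$ is $K$ itself, so $\type{\Phi^K(X)}\geq\type{\Phi^K(Y)}\geq n+k-1$. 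Thus $X$ has the required type function, and it suffices to produce $Y$.

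\emph{Step 2: inflation along $G\to G/K$.} Assuming now $G=H$, write $E:=G/K\cong(\Z/p)^k$, let $q\colon G\to E$ be the quotient map, and let $i_K^*=q^*\colon\SH^c_{E,(p)}\to\SH^c_{G,(p)}$ be the inflation functor, which preserves compact objects. For a finite $E$-spectrum $Z$, the base change identity \eqref{eq:inflation} gives
\[
\Phi^K(i_K^*Z)\simeq\Phi^{q(K)}(Z)=\Phi^e(Z)\qquad\text{and}\qquad\Phi^G(i_K^*Z)\simeq\Phi^{q(G)}(Z)=\Phi^E(Z),
\]
where $\Phi^e(Z)$ denotes the underlying spectrum of $Z$ (as $q(K)$ is trivial). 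Hence I would be done once I exhibit a finite $E$-spectrum $Z$ with underlying type $\geq n+k-1$ and $\type{\Phi^E(Z)}\leq n-1$; equivalently, $P_E(e,n+k-1)\not\subseteq P_E(E,n)$ for the elementary abelian group $E=(\Z/p)^k$.

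This last non-inclusion is part of the computation of the Balmer spectrum of finite $E$-spectra in \cite{BHN^+17}, where the witnessing finite $(\Z/p)^k$-spectra are built from the Arone--Lesh partition complexes $\mathcal{P}_{p^n}^\diamond$ (compare \Cref{rem:counter}) together with $v_n$-self maps to shift the base height $n$. I expect this imported input to be the main obstacle: realizing the full $k$-fold blue-shift by an honest finite $(\Z/p)^k$-spectrum is exactly where the partition complexes genuinely enter, and it cannot be achieved with complexes inflated from quotients $(\Z/p)^k\to(\Z/p)^j$, since the type of a smash product is the maximum rather than the sum of the types of its factors. By contrast, Steps 1 and 2 are purely formal, resting only on the compatibility of induction (\Cref{lem:induced}) and inflation (\eqref{eq:inflation}) with geometric fixed points.
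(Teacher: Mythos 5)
What you write is a proof of item vi) only, and for that item it is correct and essentially the paper's own route: your Step 1 (inducing up from $H$ and using that $K$ is normal in $G$, so the minimum in \Cref{lem:induced}~ii) is taken over $K$ alone) and Step 2 (inflating along $H\to H/K$ via \eqref{eq:inflation}) are exactly the specializations of Parts i) and ii) of \Cref{prop:changeofgroups}, and the paper likewise imports the elementary abelian non-inclusion $P_E(e,n+k-1)\not\subseteq P_E(E,n)$ for $E=(\Z/p)^k$ as a black box from \cite{BHN^+17}, where the Arone--Lesh partition complexes enter.

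The genuine gap is that the statement comprises six items, and items i)--v) do not follow from the induction/inflation formalism you set up; each needs an input your proposal never touches. For i) and ii) the paper obtains the required obstructions from Balmer's comparison map $\Spc(\mathcal{SH}^c_{G,(p)})\to\Spec(A(G)_{(p)})$ together with tom Dieck's computation of the prime spectrum of the Burnside ring (\Cref{thm:burnside}); your constructions only produce witnesses for non-inclusions, and beyond \Cref{lem:primerelations} they cannot rule inclusions out, which is what ``$p$-subcotoral'' and ``cotoral'' in i) and ii) require. Item iii) is an inclusion, i.e.\ a statement about \emph{all} finite $G$-spectra, and is proved by a Borel-localization argument resting on the non-vanishing $\Phi^{\T}(\underline{K(n)})\neq 0$ (\Cref{thm:moravageom}); no construction of examples can establish it. Item iv) would fit your reduction scheme, but it needs the positive blueshift inclusion for cyclic $p$-groups from \cite{BHN^+17} (the case $\Z/p$ from \cite{BS17b}), which is a different statement from the non-inclusion you quote for vi). Item v) requires an iterative argument climbing a subnormal chain, based on tom Dieck's theorem that a proper $p$-subcotoral subgroup has Weyl group either infinite or of order divisible by $p$, plus a limit step (passing to the closure of a union of subgroups) that uses local constancy of type functions (\Cref{prop:isotropylocconstant}). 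So as a proof of \Cref{thm:inclusions} the proposal covers one of six items; the rest require ideas absent from it.
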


The proof is given in \Cref{sec:burnside} and \Cref{sec:cotoral}. Using the change of groups properties of \Cref{prop:changeofgroups} below, Items $iv)$ and $vi)$ are immediate consequences of the main result of \cite{BHN^+17}, with the case $H/K\cong \Z/p$ previously shown in \cite{BS17b}. 

\begin{remark}
Note that the conclusion of $vi)$ translates into the existence of a finite $G$-spectrum~$X$ with the property that $\type{X}(K) \ge n+k-1$ and $\type{X}(H)< n$. Under the condition that $H/K$ is elementary abelian, suitable complexes can be formally constructed from the Arone--Lesh complexes~\cite{AL17}, which thus form the crucial ingredient in our proof of  $vi)$. We currently do not know how to extend this result beyond the case of abelian groups. 
\end{remark}

We note that $ii)$ and $iii)$ imply that $P_G(K,1)\subseteq P_G(H,1)$ if and only if $K$ is $G$-conjugate to a cotoral subgroup of $H$. This gives back the computation of the Balmer spectrum of the homotopy category of finite rational $G$-spectra of \cite{Gre17}. Parts $iii)$ and $iv)$ can be used to obtain many more inclusions of primes by inducting on subnormal chains $K=H_1\subseteq H_2\subseteq \hdots \subseteq H_n=H$ with each quotient $H_{i+1}/H_i$ a torus or a cyclic $p$-group.

\subsection{Change of groups} We begin by proving the following change of groups properties regarding \Cref{quest}.
\begin{prop} \label{prop:changeofgroups}
Let $G$ be a compact Lie group and consider $\Spc(\mathcal{SH}_{G,(p)}^c)$.
\begin{enumerate}[i)]
	\item There is an inclusion $P_G(K,n)\subseteq P_G(H,m)$ if and only if there exists an element $g\in G$ such that the conjugate $K^g$ is a subgroup of $H$ and $P_H(K^g,n)\subseteq P_H(H,m)$.
	\item	If $K'$ is a normal subgroup of $G$ contained in $K$ and $H$, then there is an inclusion $P_G(K,n)\subseteq P_G(H,m)$ if and only if there is an inclusion $P_{G/K'}(K/K',n)\subseteq P_{G/K'}(H/K',m)$.
	\item Let $K'\subseteq K\subseteq H$ be subgroups of $G$ such that $K$ is normal in $G$. If $P_G(K',n)\subseteq P_G(H,m)$, then also $P_G(K,n)\subseteq P_G(H,m)$.
\end{enumerate}
\end{prop}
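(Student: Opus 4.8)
The plan is to prove the three parts in the order (ii), (iii), (i): part (iii) will be a quick consequence of (ii), and all the real work is in the "only if" half of (i).

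For (ii), I would use two formal functors. First, the geometric $K'$-fixed point functor $\Phi^{K'}\colon \SH_{G,(p)}\to \SH_{G/K',(p)}$ attached to the normal subgroup $K'$ (see \cite{LMSM86,MM02}); it is exact, symmetric monoidal, preserves compact objects, and satisfies $\Phi^{L/K'}_{G/K'}\circ\Phi^{K'}\simeq \Phi^L_G$ for all $K'\subseteq L\subseteq G$. Second, inflation $i_{K'}^*\colon \SH_{G/K',(p)}\to \SH_{G,(p)}$, for which \eqref{eq:inflation} gives $\Phi^L_G(i_{K'}^*(-))\simeq \Phi^{LK'/K'}_{G/K'}(-)$. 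Since $K'\subseteq K$ and $K'\subseteq H$ we have $KK'/K'=K/K'$ and $HK'/K'=H/K'$. For the forward implication, given $Z\in\SH_{G/K',(p)}^c$ with $\type{Z}(K/K')\geq n$, inflation yields $\type{i_{K'}^*Z}(K)=\type{Z}(K/K')\geq n$, so $i_{K'}^*Z\in P_G(K,n)\subseteq P_G(H,m)$ forces $\type{Z}(H/K')=\type{i_{K'}^*Z}(H)\geq m$; conversely, given $X\in\SH_{G,(p)}^c$ with $\type{X}(K)\geq n$, the object $\Phi^{K'}X$ has $\type{\Phi^{K'}X}(K/K')=\type{X}(K)\geq n$, so the assumed inclusion over $G/K'$ gives $\type{X}(H)=\type{\Phi^{K'}X}(H/K')\geq m$.

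For (iii), since $K\subseteq H$ and $K$ is normal in $G$, applying (ii) to the normal subgroup $K$ reduces $P_G(K,n)\subseteq P_G(H,m)$ to $P_{G/K}(e,n)\subseteq P_{G/K}(H/K,m)$. To prove the latter I would take $Z\in\SH_{G/K,(p)}^c$ with $\type{Z}(e)\geq n$ and inflate along $G\to G/K$; because $K'\subseteq K$, \eqref{eq:inflation} gives $\type{i_K^*Z}(K')=\type{Z}(e)\geq n$, hence $i_K^*Z\in P_G(K',n)\subseteq P_G(H,m)$ and therefore $\type{Z}(H/K)=\type{i_K^*Z}(H)\geq m$.

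For (i), the "if" direction is a restriction argument: if $K^g\subseteq H$ and $P_H(K^g,n)\subseteq P_H(H,m)$, then any $X\in\SH_{G,(p)}^c$ with $\type{X}(K)\geq n$ has $\type{\res_H^GX}(K^g)=\type{X}(K^g)=\type{X}(K)\geq n$, so $\res_H^GX\in P_H(K^g,n)\subseteq P_H(H,m)$ and $\type{X}(H)=\type{\res_H^GX}(H)\geq m$. The substance is the "only if" direction, and I expect this to be the main obstacle: inducing up from $H$ does \emph{not} preserve the type at $K$, because by \Cref{lem:induced}(ii) the type of $\Phi^K(\Ind_H^G Y)$ is only bounded below by the minimum of $\type{Y}(K')$ over \emph{all} subgroups $K'\subseteq H$ that are $G$-conjugate to $K$, and these need not be $H$-conjugate to $K$ (this failure is genuine, as in \Cref{rem:counter}). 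I would get around this as follows. Assume $P_G(K,n)\subseteq P_G(H,m)$; by \Cref{lem:primerelations}(1) some conjugate $K^g$ lies in $H$ (otherwise the inclusion already fails), so after replacing $K$ by $K^g$ we may assume $K\subseteq H$. Suppose, for contradiction, that $P_H(K',n)\not\subseteq P_H(H,m)$ for \emph{every} subgroup $K'\subseteq H$ which is $G$-conjugate to $K$. The set $\bar B=\{[L]\in\Sub(H)/H : L \text{ is } G\text{-conjugate to } K\}$ is closed in $\Sub(H)/H$: its preimage in $\Sub(H)$ is $\Sub(H)\cap (G\cdot K)$, and the conjugacy class $G\cdot K$, being the image of the compact group $G$ under the continuous conjugation map, is compact and hence closed in $\Sub(G)$. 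As $\Sub(H)/H$ is compact (\Cref{prop:mz}), $\bar B$ is compact. For each $[K']\in\bar B$ choose $Y_{K'}\in\SH_{H,(p)}^c$ with $\type{Y_{K'}}(K')\geq n$ and $\type{Y_{K'}}(H)<m$; by \Cref{prop:isotropylocconstant} the function $\type{Y_{K'}}$ is locally constant, so it is $\geq n$ on some open neighbourhood $U_{K'}$ of $[K']$. Passing to a finite subcover $U_{K'_1},\dots,U_{K'_s}$ of $\bar B$ and setting $Y=Y_{K'_1}\wedge\dots\wedge Y_{K'_s}$, we obtain $\type{Y}(L)\geq n$ for all $L\subseteq H$ with $[L]\in\bar B$, while $\type{Y}(H)=\max_i\type{Y_{K'_i}}(H)<m$. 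Then $X:=\Ind_H^GY$ satisfies $\type{X}(H)=\type{Y}(H)<m$ by \Cref{lem:induced}(i) and $\type{X}(K)\geq\min\{\type{Y}(L): L\subseteq H,\ L \text{ is } G\text{-conjugate to } K\}\geq n$ by \Cref{lem:induced}(ii), so $X\in P_G(K,n)\setminus P_G(H,m)$, contradicting the assumption. Hence $P_H(K^g,n)\subseteq P_H(H,m)$ for some conjugate $K^g\subseteq H$, completing the proof. The key technical inputs making this work are the conjugation-invariance and local constancy (\Cref{prop:isotropylocconstant}) of type functions of finite $G$-spectra, the compactness of $\bar B$, and the behaviour of $\Ind_H^G$ on geometric fixed points (\Cref{lem:induced}).
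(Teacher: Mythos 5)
Your proposal is correct and follows essentially the same route as the paper: the ``if'' direction of i) by restriction, the ``only if'' direction by smashing together finitely many witnessing $H$-spectra and inducing up via \Cref{lem:induced}, and parts ii) and iii) via inflation and the refined geometric fixed point functor, exactly as in the text. The only divergence is in how the finite smash product in i) is produced: the paper invokes the finiteness of the set of $H$-conjugacy classes of $G$-conjugates of $K$ inside $H$ (\Cref{rem:subconjugacy}), whereas you extract a finite family from compactness of the conjugacy class in $\Sub(H)/H$ combined with local constancy of type functions (\Cref{prop:isotropylocconstant}); both reductions are valid and both ultimately rest on Montgomery--Zippin.
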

The first part can be interpreted as a `Going-Up' theorem for restriction to subgroup functors, see \cite[Prop. 6.9]{BS17b} in the finite group case.

\begin{proof} $i)$: If $P_H(K^g,n)\subseteq P_H(H,m)$, then also $P_G(K^g,n)\subseteq P_G(H,m)$, since the question of whether a finite $G$-spectrum $X$ lies in $P_G(H,m)$ only depends on the restriction $\res_H^G(X)$ to an $H$-spectrum. Since $K^g$ is conjugate to $K$, it follows that also $P_G(K,n)\subseteq P_G(H,m)$. We now show that if $P_H(K^g,n)\not\subseteq P_H(H,m)$ for all conjugates $K^g$ of $K$ which are subgroups of $H$, then also $P_G(K,n)\not\subseteq P_G(H,m)$. We want to apply induction $\Ind_H^G(-)$ to a suitable $H$-spectrum $X$. Recall from \Cref{lem:induced} that the type of $\Phi^K(\Ind_H^G X)$ is at least as large as the minimum of the types of all $\Phi^{K^g}(X)$ for $G$-conjugates $K^g$ contained in $H$, and that the type of $\Phi^H(\Ind_H^G X)$ is the same as that of $\Phi^H(X)$. It is hence sufficient to construct a finite $H$-spectrum $X$ such that $\Phi^H(X)$ has type~$<m$ and $\Phi^{K^g}(X)$ has type $\geq n$ for all conjugates $K^g$. For every conjugate $K^g$, we choose a finite $H$-spectrum $X_{K^g}$ with the property that $\Phi^{K^g}(X_{K^g})$ has type $\geq n$ and $\Phi^H(X_{K^g})$ has type~$<m$, which exists by assumption. Note that, up to $H$-conjugacy, there are only finitely many $K^g\subseteq H$ (see \Cref{rem:subconjugacy}). Hence, we can consider the smash product
\[ 
X=\bigwedge_{(K^g\subseteq H)} X_{K^g}. 
\]
This has the desired property, since the type of a smash product is the maximum of the types of the smash factors.

$ii)$: This follows from the fact that $\Spc(\mathcal{SH}^c_{G/K',(p)})$ sits inside $\Spc(\mathcal{SH}^c_{G,(p)})$ as a retract, as the subspace of all $P_G(H,-)$ with $H$ containing $K'$. The inclusion $\Spc(\mathcal{SH}^c_{G/K',(p)})\to \Spc(\mathcal{SH}^c_{G,(p)})$ is induced by the refined version of the geometric fixed point functor $\Phi^{K'}$ which remembers the $G/K'$-action. The projection $\Spc(\mathcal{SH}^c_{G,(p)})\to \Spc(\mathcal{SH}^c_{G/K'})$ is induced by the inflation functor $i_{K'}\colon\mathcal{SH}^c_{G/K'}\to \mathcal{SH}^c_{G,(p)}$ (\Cref{sec:inflation}).

$iii)$: The third statement also follows from the previous argument, this time applied to the projection $p\colon G\to G/K$. The self-map $(\Phi^K)^*\circ (i_K)^*$ of $\mathcal{SH}^c_{G,(p)}$ sends the ideal $P_G(H,m)$ to itself, since $H$ contains the normal subgroup $K$. On the other hand, $(i_K)^*$ sends $P_G(K',n)$ to $P_{G/K}(p(K'),n)$, as a consequence of the equivalence \eqref{eq:inflation} of \Cref{sec:inflation}. Since $K'$ is contained in $K$, the image $p(K')$ is the trivial subgroup $K/K$. Thus 
\[
(\Phi^K)^*((i_K)^*(P_G(K',n)))=P_G(K,n). 
\]
As the composite $(\Phi^K)^*\circ (i_K)^*$ preserves inclusions, this shows the claim.
\end{proof}

If one is willing to study \Cref{quest} simultaneously for all compact Lie groups, \Cref{prop:changeofgroups} allows us to reduce to the case where $H$ equals the ambient group $G$ and $K$ contains no non-trivial subgroup normal in $H$. In \Cref{sec:reduce} we explain how this can be used to reduce to the case where $H$ is an extension
\[ 1\to A\to H\to P\to 1, \]
of a finite $p$-group $P$ by an abelian compact Lie group $A$ with $|\pi_0(A)|$ coprime to $p$. In the case where $K$ is a normal subgroup of $H$, it can be further reduced to $H$ a finite $p$-group.

\subsection{The spectrum of the Burnside ring} \label{sec:burnside}
We now start with the proof of \Cref{thm:inclusions}. Inspired by \cite{BS17b}, we use the comparison map to the spectrum of the Burnside ring $A(G)=\pi_0^G(\mathbb{S}_G)$ to rule out certain inclusions between the primes. The spectrum of the Burnside ring was computed by Dress in the case of finite groups \cite{Dre69} and by tom Dieck \cite{tD75} for compact Lie groups. We now recall this computation, or more precisely its $p$-local version. For every closed subgroup $H$, the geometric fixed point map induces a ring homomorphism on endomorphism rings
\[ 
\varphi^H\colon A(G)_{(p)} \cong \pi_0^G\mathbb{S}_G \to \pi_0\Phi^H(\mathbb{S}_G)\cong \pi_0(\mathbb{S})\cong \mathbb{Z}_{(p)}. 
\]
These homomorphisms are called the \emph{marks} of $A(G)_{(p)}$.

\begin{remark}
 The marks were originally studied by tom Dieck in \cite{tD75}. There he defined the Burnside ring of a compact Lie group as the ring of certain equivalence classes of closed differentiable $G$-manifolds. The mark homomorphism $\varphi^H$ assigns to such a $G$-manifold $X$ the Euler characteristic of its fixed space $X^H$ (in fact the equivalence relation is exactly defined as considering two $G$-manifolds to be equivalent if these Euler characteristics are the same for all closed subgroups $H$ of $G$). In \cite{tD75c} tom Dieck proved that his version of the Burnside ring agrees with $\pi_0^G\mathbb{S}_G$. The isomorphism sends a closed $G$-manifold $X$ to the composite
\[ \mathbb{S}_G\to F(\Sigma^{\infty}X_+,\Sigma^{\infty} X_+)\simeq \Sigma^{\infty}X_+\wedge F(\Sigma^{\infty} X_+,\mathbb{S}_G) \to \mathbb{S}_G, \]
using that $\Sigma^{\infty} X_+$ is dualizable in $\mathcal{SH}_{G,(p)}$. This construction is called the categorical Euler characteristic or trace, see \cite[Secs. III.7 and V.1]{LMSM86}. Since geometric fixed points $\Phi^H$ are symmetric monoidal, it follows that they send the categorical Euler characteristic of $\Sigma^{\infty}X_+$ to the categorical Euler characteristic of $\Phi^H(\Sigma^{\infty} X_+)\simeq \Sigma^{\infty} X^H_+$ in the non-equivariant stable homotopy category. It is a classical fact that this categorical Euler characteristic equals the usual Euler characteristic for any finite CW-complex. Hence tom Dieck's definition of the mark homomorphisms agrees with the one given above (see also \cite[Sec. V.2]{LMSM86} for more details on this discussion). 
\end{remark}

It turns out that the marks determine the spectrum of $A(G)_{(p)}$. For the two primes $(0)$ and $(p)$ of $\mathbb{Z}_{(p)}$, we denote by $q(H,1)$ and $q(H,\infty)$ the respective pullback along $\varphi^H$. Recall that, given a subgroup $H$ of $G$, we write $\omega_G(H)$ for a maximal cotoral extension of $H$ in $G$ and $\mathcal{O}(H)$ for the minimal normal subgroup of $H$ whose quotient is a finite $p$-group. Finally we call $\omega_G(\mathcal{O}(H))$ the \emph{$p$-perfection} of $H$ in $G$ (see \cite{FO05}). Note that, unlike $\mathcal{O}(H)$, the $p$-perfection depends on the ambient group $G$.
\begin{theorem}[({\cite[Sec. 4]{tD75},\cite[Sec. 12]{Fau08}})] \label{thm:burnside} Let $G$ be a compact Lie group. All prime ideals of $A(G)_{(p)}$ are of the form $q(H,1)$ or $q(H,\infty)$. They are related as follows:
 \begin{enumerate}
	\item $q(H,1)=q(H',1)$ if and only if $\omega_G(H)$ is conjugate to $\omega_G(H')$, and there are no proper inclusions of the form $q(H,1)\subset q(H',1)$.
	\item $q(H,\infty)=q(H',\infty)$ if and only if $\omega_G(\mathcal{O}(H))$ is conjugate to $\omega_G(\mathcal{O}(H'))$, and there are no proper inclusions of the form $q(H,\infty)\subset q(H',\infty)$.
	\item $q(H,1)\subseteq q(H',\infty)$ if and only if $q(H,\infty)=q(H',\infty)$.
	\item There are no inclusions of the form $q(H,\infty)\subseteq q(H',1)$.
\end{enumerate}
\end{theorem}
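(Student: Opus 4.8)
The plan is to reduce the computation of $\Spec A(G)_{(p)}$ to that of the rationalisation $A(G)_{\Q}$ and the reduction $A(G)_{(p)}/p$, which are zero-dimensional rings with transparent spectra. The one substantial input is tom Dieck's and Fausk's computation of the Burnside ring \cite{tD75, Fau08}: writing $\Phi_{fW}(G)\subseteq\Sub(G)/G$ for the (compact, totally disconnected, Hausdorff) subspace of conjugacy classes of closed subgroups with finite Weyl group, and $\Phi_p(G)\subseteq\Phi_{fW}(G)$ for the closed subspace of classes of the form $[\omega_G(\mathcal O(H))]$, the total mark map induces ring isomorphisms $A(G)_{\Q}\cong C(\Phi_{fW}(G),\Q)$ and $A(G)_{(p)}/p\cong C(\Phi_p(G),\F_p)$, where $C(X,k)$ denotes the ring of locally constant $k$-valued functions on $X$. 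Moreover $\varphi^H=\varphi^{\omega_G(H)}$, since a torus action on a manifold has the Euler characteristic of its fixed set, and $\varphi^H\equiv\varphi^{\omega_G(\mathcal O(H))}\pmod p$ by the classical Burnside congruence for the finite $p$-group quotient $H/\mathcal O(H)$; under the second isomorphism $\varphi^H\bmod p$ is evaluation at the point $[\omega_G(\mathcal O(H))]\in\Phi_p(G)$, and under the first $\varphi^H$ becomes evaluation at $[\omega_G(H)]\in\Phi_{fW}(G)$.

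Two elementary observations drive the argument. First, the unit $\Z_{(p)}\to A(G)_{(p)}$, $1\mapsto[G/G]$, is split by every mark, so $\varphi^H\colon A(G)_{(p)}\twoheadrightarrow\Z_{(p)}$ is a retraction with $\ker\varphi^H=q(H,1)$ and $q(H,\infty)=(\varphi^H)^{-1}((p))\supseteq q(H,1)$. Second, for a Stone space $X$ and a field $k$ the ring $C(X,k)$ is von Neumann regular, hence zero-dimensional, with $\Spec C(X,k)\cong X$ realised by the evaluation maps; in particular all of its primes are pairwise incomparable. Now let $\wp\subseteq A(G)_{(p)}$ be prime; its contraction along the unit is $(0)$ or $(p)$. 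If $\wp\cap\Z_{(p)}=(p)$, then $\wp\supseteq pA(G)_{(p)}$, so $\wp$ is the preimage of a prime of $A(G)_{(p)}/p\cong C(\Phi_p(G),\F_p)$, i.e.\ of evaluation at a point $[\omega_G(\mathcal O(H))]$; transporting back, $\wp=\{x:\varphi^H(x)\in(p)\}=q(H,\infty)$. Thus every such $\wp$ is some $q(H,\infty)$, with $q(H,\infty)=q(H',\infty)$ if and only if $[\omega_G(\mathcal O(H))]=[\omega_G(\mathcal O(H'))]$, and since taking preimages is an order-preserving bijection onto $\{\,$primes containing $p\,\}$, these primes are pairwise incomparable --- this is (2). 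If instead $\wp\cap\Z_{(p)}=(0)$, then $\wp$ is disjoint from the image of $\Z\smallsetminus\{0\}$, hence corresponds, compatibly with inclusions, to a prime of the localisation $A(G)_{\Q}\cong C(\Phi_{fW}(G),\Q)$ of $A(G)_{(p)}$ at the nonzero integers, i.e.\ to evaluation at some $[\omega_G(H)]$; contracting back gives $\wp=q(H,1)$, with $q(H,1)=q(H',1)$ iff $[\omega_G(H)]=[\omega_G(H')]$, and again these are pairwise incomparable --- this is (1). The two cases together show that every prime is a $q(H,1)$ or a $q(H,\infty)$.

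It remains to treat the mixed inclusions. For (4): $q(H,\infty)$ contains $p\cdot[G/G]$ whereas $q(H',1)\cap\Z_{(p)}=(0)$, so $q(H,\infty)\not\subseteq q(H',1)$. For (3): if $q(H,1)\subseteq q(H',\infty)$, then for every $x\in A(G)_{(p)}$ we have $x-\varphi^H(x)\cdot[G/G]\in\ker\varphi^H=q(H,1)\subseteq q(H',\infty)$, whence $\varphi^{H'}(x)\equiv\varphi^{H'}(\varphi^H(x)\cdot[G/G])=\varphi^H(x)\pmod p$; thus $\varphi^H\equiv\varphi^{H'}\pmod p$, which forces $[\omega_G(\mathcal O(H))]=[\omega_G(\mathcal O(H'))]$, i.e.\ $q(H,\infty)=q(H',\infty)$. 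Conversely, if $q(H,\infty)=q(H',\infty)$ then $\varphi^H\equiv\varphi^{H'}\pmod p$, so $x\in q(H,1)=\ker\varphi^H$ forces $\varphi^{H'}(x)\in(p)$, i.e.\ $x\in q(H',\infty)$. This proves (3) and completes the classification of inclusions.

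I expect the main obstacle to reside entirely in the quoted input: tom Dieck's and Fausk's identification of $A(G)_{\Q}$ and $A(G)_{(p)}/p$ with rings of locally constant functions --- equivalently, the precise rational and mod-$p$ mark congruences together with the statement that the total mark map has torsion cokernel. We do not reprove this. Granting it, everything else is routine zero-dimensional commutative algebra; the one point requiring a little care is to check that contracting the evaluation primes of $A(G)_{\Q}$ and $A(G)_{(p)}/p$ back to $A(G)_{(p)}$ recovers exactly the primes $q(H,\bullet)$ and creates no spurious inclusions, which is handled by the order-preserving-bijection remarks above.
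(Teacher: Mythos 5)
Your overall strategy --- splitting a prime $\wp\subseteq A(G)_{(p)}$ according to its contraction to $\Z_{(p)}$, handling the $(0)$-case through the rationalisation and the $(p)$-case through the mod-$p$ reduction, and settling the mixed inclusions (3) and (4) by the elementary retraction argument $x-\varphi^H(x)\cdot[G/G]\in\ker\varphi^H$ --- is sound, and your proofs of (3), (4) and of the incomparability statements are correct and cleaner than a bare citation. However, your key mod-$p$ input is false as stated: $A(G)_{(p)}/p$ is in general \emph{not} isomorphic to $C(\Phi_p(G),\F_p)$, because it has nonzero nilpotents. Already for $G=C_p$ one has $A(C_p)_{(p)}/p\cong\F_p[x]/(x^2)$ with $x=[C_p/e]$ (since $[C_p/e]^2=p[C_p/e]$), while $\Phi_p(C_p)$ is a single point, so the target is just $\F_p$. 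What your argument actually needs is that the total mod-$p$ mark map $A(G)_{(p)}/p\to C(\Phi_p(G),\F_p)$ induces a homeomorphism on $\Spec$, i.e.\ has nilpotent kernel and separates the points of $\Phi_p(G)$ (surjectivity, or at least point-separation, plus compactness of $\Phi_p(G)$ so that all primes of the function ring are evaluation kernels). The nilpotent-kernel half can be extracted from tom Dieck's embedding of $A(G)$ into $C(\Phi_{fW}(G),\Z)$ with cokernel of bounded exponent, together with your congruences $\varphi^H\equiv\varphi^{\omega_G(\mathcal{O}(H))}\pmod p$; but the separation/surjectivity half is precisely the nontrivial content of part (2) --- that distinct classes $[\omega_G(\mathcal{O}(H))]$ yield distinct primes $q(H,\infty)$ --- so as written you have quietly packaged the hardest part of the theorem into a ``quoted input'' in a form that you have neither verified appears in \cite{tD75,Fau08} nor corrected for the nilpotents. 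The rational half of your input, $A(G)_{\Q}\cong C(\Phi_{fW}(G),\Q)$ via marks, is a genuine theorem of tom Dieck and that part of your argument is fine.

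For comparison, the paper does not attempt a structural argument at all: it cites \cite[Thm.~4, Props.~8, 9, 12, 16]{tD75} and \cite[Lem.~12.5]{Fau08} for the classification of primes, the dependence only on $\omega_G(H)$ resp.\ $\omega_G(\mathcal{O}(H))$, and parts (1)--(3), and only argues (4) directly (via the element $p$, as you do). So your proposal is a genuinely different, more self-contained route --- reduce to two zero-dimensional fibres and do commutative algebra --- and it would be a perfectly good proof once you (a) replace the false isomorphism by the statement about the reduced ring (noting that $\Spec$ only sees the reduction, and that integrality of $C(\Phi_p(G),\F_p)$ over the image, via $f^p=f$, gives lying-over so that surjectivity can be weakened), (b) justify that $\Phi_p(G)$ is a closed, hence compact, subspace of $\Sub(G)/G$, and (c) either prove the mod-$p$ separation statement or locate it precisely in tom Dieck/Fausk, since it is equivalent to the essential content of (2) rather than a harmless preliminary.
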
 
\begin{proof}
The fact that all prime ideals are of this form is \cite[Thm. 4]{tD75}. By \cite[Prop. 8]{tD75}, the primes of the form $q(H,1)$ only depend on the conjugacy class of $\omega_G(H)$ and the primes of the form $q(H,\infty)$ only depend on the conjugacy class of $\omega_G(\mathcal{O}(H))$. Combining this with \cite[Props. 9 and 12]{tD75} gives Part $(i)$. Part $(ii)$ can be derived from \cite[Prop. 16]{tD75}, but is stated more explicitly in this form in \cite[Lem. 12.5]{Fau08}. Part $(iii)$ is \cite[Prop. 9]{tD75}, while Part $(iv)$ follows from the fact that the element $p\in A(G)_{(p)}$ lies in $q(H',1)$ but not in $q(H,\infty)$, for any pair of closed subgroups $H,H'$ of $G$.
\end{proof}
The comparison map $\psi\colon \Spc(\mathcal{SH}^c_{G,(p)})\to \Spec(A(G)_{(p)})$ is defined to send $\wp\in \Spc(\mathcal{SH}^c_{G,(p)})$ to the prime ideal of all $\alpha\in A(G)_{(p)}$ such that the cofiber $C(\alpha)$ is not contained in $\wp$, see \cite{Bal10}. This computes as follows:

\begin{prop} Given $P_G(H,n) \in \Spc(\mathcal{SH}^c_{G,(p)})$,we have
\[ 
\psi(P_G(H,n))=\begin{cases} q(H,1) & \text{ if } n=1 \\
															q(H,\infty) & \text{ if } n>1. \end{cases}
\]
\end{prop}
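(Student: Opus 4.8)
The plan is to unwind Balmer's definition of the comparison map and reduce everything to the standard computation with Moore spectra. Recall that $\psi$ sends a prime $\wp$ to $\{\alpha \in A(G)_{(p)} \mid C(\alpha) \notin \wp\}$, where $C(\alpha) = \cone(\alpha\colon \mathbb{S}_G \to \mathbb{S}_G)$ and we use the identification $A(G)_{(p)} \cong \pi_0^G(\mathbb{S}_G)$. The first step is to compute $\Phi^H(C(\alpha))$: since $\Phi^H$ is exact and symmetric monoidal, it sends $\mathbb{S}_G$ to $\mathbb{S}$ and hence $C(\alpha)$ to $\cone(\Phi^H(\alpha)\colon \mathbb{S} \to \mathbb{S})$. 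By the very definition of the marks, $\Phi^H(\alpha)$ corresponds to the element $\varphi^H(\alpha) \in \pi_0(\mathbb{S}) \cong \mathbb{Z}_{(p)}$; since $[\mathbb{S},\mathbb{S}]_0 = \mathbb{Z}_{(p)}$, the map $\Phi^H(\alpha)$ is multiplication by $\varphi^H(\alpha)$, so $\Phi^H(C(\alpha)) \simeq \mathbb{S}/\varphi^H(\alpha)$, the cofiber of multiplication by $\varphi^H(\alpha)$ on the (implicitly $p$-local) sphere.

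Next I would determine, for $d \in \mathbb{Z}_{(p)}$ and $m \in \NN$, when $K(m)_*(\mathbb{S}/d) \neq 0$, which for $d = \varphi^H(\alpha)$ is exactly the condition $C(\alpha) \notin P_G(H,m+1)$. There are three cases. If $d$ is a unit, then $\mathbb{S}/d \simeq 0$ and every $K(m)_*(\mathbb{S}/d)$ vanishes. If $d = 0$, then $\mathbb{S}/d \simeq \mathbb{S} \vee \Sigma\mathbb{S}$ and $K(m)_*(\mathbb{S}/d) \neq 0$ for all $m$. If $d = p^k u$ with $k \ge 1$ and $u$ a unit, then $\mathbb{S}/d \simeq \mathbb{S}/p^k$, whose rational homology vanishes (so $K(0)_*(\mathbb{S}/d) = 0$, recalling $K(0) = H\mathbb{Q}$), while $K(m)_*(\mathbb{S}/p^k) \cong K(m)_* \oplus \Sigma K(m)_*$ is nonzero for $m \ge 1$, since $p$ acts as zero on $K(m)_*$. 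Summarising: $K(0)_*(\mathbb{S}/d) \neq 0$ iff $d = 0$, and for $m \ge 1$ we have $K(m)_*(\mathbb{S}/d) \neq 0$ iff $d$ is a non-unit, i.e.\ $d \in (p) \subset \mathbb{Z}_{(p)}$.

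Finally I would assemble the two cases. For $n = 1$ we have $C(\alpha) \notin P_G(H,1)$ iff $K(0)_*(\mathbb{S}/\varphi^H(\alpha)) \neq 0$ iff $\varphi^H(\alpha) = 0$, so $\psi(P_G(H,1)) = \ker(\varphi^H) = (\varphi^H)^{-1}((0)) = q(H,1)$. For $n > 1$, so $n-1 \ge 1$, we have $C(\alpha) \notin P_G(H,n)$ iff $\varphi^H(\alpha)$ is a non-unit, so $\psi(P_G(H,n)) = (\varphi^H)^{-1}((p)) = q(H,\infty)$. I do not expect a genuine obstacle here: the only points requiring a little care are the identification of $\Phi^H(\alpha)$ with multiplication by the mark $\varphi^H(\alpha)$, and keeping track of which elements of $\mathbb{Z}_{(p)}$ are units (equivalently, which lie outside the maximal ideal $(p)$); everything else is the standard Moore-spectrum computation recalled in \Cref{ssec:classicaltst}.
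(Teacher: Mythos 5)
Your proof is correct and follows essentially the same route as the paper: the heart in both cases is the observation that the relevant cofiber becomes a ($p$-local) Moore spectrum $\mathbb{S}/\varphi^H(\alpha)$ after applying $\Phi^H$, together with the standard determination of which Morava $K$-theories detect $\mathbb{S}/d$ for $d\in\Z_{(p)}$. The only cosmetic difference is that the paper packages the reduction via the commutative comparison square relating $\psi$ for $\mathcal{SH}^c_{G,(p)}$ and $\mathcal{SH}^c_{(p)}$ through $(\Phi^H)^*$ and $(\varphi^H)^*$ and then computes $\psi(P(n))$ nonequivariantly, whereas you inline that naturality by computing $\Phi^H(C(\alpha))$ directly.
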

\begin{proof} The proof builds on the commutative comparison diagram
\[ 
\xymatrix{ \Spc(\mathcal{SH}^c_{G,(p)}) \ar[r]^-{\psi} & \Spec(A(G)_{(p)}) \\
							\Spc(\mathcal{SH}^c_{(p)}) \ar[r]^-{\psi} \ar[u]^{(\Phi^H)^*}& \Spec(\Z_{(p)}). \ar[u]_{(\varphi^H)^*}  }
\]
Since $P_G(H,n)$ is by definition equal to $(\Phi^H)^*(P(n))$, it is enough to compute $\psi(P(n))$. First note that if $x\in \Z_{(p)}$ is non-trivial, then it becomes a unit in $\Q$, and hence $K(0)_*(C(x))=H\Q_*(C(x))=0$, and thus $x\notin \psi(P(1))$. It follows that $\psi(P(1))$ is the $0$-ideal, as claimed. Now, if $x\in \Z_{(p)}$ is divisible by $p$, then its image in $K(n-1)_*$ for $n>1$ is zero, and hence $C(x)\notin P(n)$, thus $x\in \psi(P(n))$. This shows that for all $n>1$ we have $(p)\subseteq \psi(P(n))$, and hence $(p)= \psi(P(n))$, since $(p)$ is maximal. The statement now follows from the commutativity of the above diagram and the facts that $q(H,1)=(\varphi^H)^*((0))$ and $q(H,\infty)=(\varphi^H)^*((p))$.\end{proof}

Since the comparison map is inclusion reversing, we see:
\begin{cor} If $P_G(K,n)\subseteq P_G(H,m)$ is an inclusion in $\Spc(\mathcal{SH}^c_{G,(p)})$, then $\omega_G(\mathcal{O}(K))$ is conjugate to $\omega_G(\mathcal{O}(H))$. If $n=1$, then $\omega_G(K)$ is conjugate to $\omega_G(H)$.
\end{cor}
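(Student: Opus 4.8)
The statement should follow formally from the computation of the comparison map
$\psi\colon\Spc(\mathcal{SH}^c_{G,(p)})\to\Spec(A(G)_{(p)})$ obtained just above, together with
tom Dieck's description of $\Spec(A(G)_{(p)})$ recorded in \Cref{thm:burnside}. First I would
recall that, being a continuous map of spectral spaces, $\psi$ reverses inclusions: if
$\wp\subseteq\wp'$ in $\Spc(\mathcal{SH}^c_{G,(p)})$ then $\psi(\wp)\supseteq\psi(\wp')$ in
$\Spec(A(G)_{(p)})$. This is forced by the conventions, since in the Balmer topology $\wp'$ lies
in the closure of $\{\wp\}$ exactly when $\wp'\subseteq\wp$, whereas in the Zariski topology the
closure of a prime is the set of primes containing it. Note that \Cref{lem:primerelations}
already tells us that $K$ is $G$-subconjugate to $H$ and that $n\geq m$; what remains is to
extract the statements about $p$-perfections.

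For the first assertion, the key is to enlarge the target and shrink the source of the given
inclusion so as to land on the two extreme primes of $A(G)_{(p)}$, namely those lying over $(0)$
and $(p)$ in $\Z_{(p)}$. Since $P_G(K,n)=(\Phi^K)^{-1}(P(n))$ and $P(\infty)\subseteq P(n)$, we
have $P_G(K,\infty)\subseteq P_G(K,n)$; and since $P(m)\subseteq P(1)$ (a chain of inclusions by
Ravenel's theorem, recalled in \Cref{thm:classicaltst}), we have $P_G(H,m)\subseteq P_G(H,1)$.
Splicing these with the hypothesis gives $P_G(K,\infty)\subseteq P_G(H,1)$ for \emph{every} $n$
and $m$. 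Applying $\psi$ and using $\psi(P_G(K,\infty))=q(K,\infty)$ together with
$\psi(P_G(H,1))=q(H,1)$, this becomes the inclusion $q(H,1)\subseteq q(K,\infty)$ in
$\Spec(A(G)_{(p)})$. By \Cref{thm:burnside}(3) such an inclusion holds if and only if
$q(H,\infty)=q(K,\infty)$, and by \Cref{thm:burnside}(2) the latter equality is equivalent to
$\omega_G(\mathcal{O}(H))$ being conjugate to $\omega_G(\mathcal{O}(K))$, which is the first
claim.

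For the second assertion I would assume $n=1$, so that $\psi(P_G(K,1))=q(K,1)$ (and in fact
$m=1$ as well, by \Cref{lem:primerelations}). From $P_G(K,1)\subseteq P_G(H,m)\subseteq P_G(H,1)$
one gets $P_G(K,1)\subseteq P_G(H,1)$, hence $q(H,1)\subseteq q(K,1)$ after applying $\psi$. Since
by \Cref{thm:burnside}(1) there are no proper inclusions among the primes of the form $q(-,1)$,
this forces $q(H,1)=q(K,1)$, and the same part of \Cref{thm:burnside} then gives that
$\omega_G(H)$ is conjugate to $\omega_G(K)$.

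I do not expect a genuine obstacle here: the argument is purely formal once the comparison map
has been computed, as it is above. The two points deserving a little care are keeping track of
the direction of inclusions under $\psi$, and the observation that routing the argument through
$P_G(K,\infty)\subseteq P_G(H,1)$ treats all values of $n$ and $m$ uniformly; a direct case
analysis on $n$ and $m$ would be slightly awkward in the boundary case $n=m=1$, where it
produces only the statement about $\omega_G(K)$ and $\omega_G(H)$ and not the one about their
$p$-perfections.
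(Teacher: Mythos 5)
Your proof is correct and is essentially the argument the paper intends: apply the inclusion-reversing comparison map $\psi$ computed in the preceding proposition and read off the conclusion from tom Dieck's description of $\Spec(A(G)_{(p)})$ in \Cref{thm:burnside}. The only (harmless) difference is your uniform routing through $P_G(K,\infty)\subseteq P_G(H,1)$ and \Cref{thm:burnside}(3), which neatly avoids a case split on $n,m$; the directions of all inclusions and the identifications $\psi(P_G(-,1))=q(-,1)$, $\psi(P_G(-,n))=q(-,\infty)$ for $n>1$ are handled correctly.
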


In order to prove Parts $i)$ and $ii)$ of \Cref{thm:inclusions}, one combines this corollary with the first part of \Cref{prop:changeofgroups} on change of groups: If $P_G(K,n)\subseteq P_G(H,m)$, then there exists a $G$-conjugate $K^g\subseteq H$ such that $P_H(K^g,n)\subseteq P_H(H,m)$. Hence, by the corollary, $\omega_H(\mathcal{O}(K^g))\sim\omega_H(\mathcal{O}(H))=\mathcal{O}(H)$ and it follws that $K^g$ is $p$-subcotoral in $H$, as desired. If $n=1$, then even $\omega_H(K^g)\sim \omega_H(H)=H$ and $K^g$ is cotoral in $H$. In summary, this proves Parts $i)$ and $ii)$ of \Cref{thm:inclusions}:

\begin{cor}\label{cor:psubcotoral} If $P_G(K,n)\subseteq P_G(H,m)$, then $K$ is $G$-conjugate to a $p$-subcotoral subgroup of $H$ and $n\geq m$. If $n=1$, then $K$ is even $G$-conjugate to a cotoral subgroup of $H$.
\end{cor}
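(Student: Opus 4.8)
The plan is to deduce the statement from the already-established structure of $\Spec(A(G)_{(p)})$ together with the change-of-groups result, essentially the argument sketched just before the statement. First I would invoke \Cref{prop:changeofgroups}(i): the hypothesis $P_G(K,n)\subseteq P_G(H,m)$ produces an element $g\in G$ with $K^g\subseteq H$ and $P_H(K^g,n)\subseteq P_H(H,m)$. Since $p$-subcotorality (and cotorality) is an intrinsic property of an inclusion of subgroups, it then suffices to show that $K^g\subseteq H$ is $p$-subcotoral and to observe that $K$ is $G$-conjugate to $K^g$; the inequality $n\geq m$ is already provided by \Cref{lem:primerelations}(1).

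Next I would apply the preceding Corollary (the one extracted from the inclusion-reversing comparison map $\psi\colon\Spc(\mathcal{SH}^c_{-,(p)})\to\Spec(A(-)_{(p)})$), but now with ambient group $H$ in place of $G$: the inclusion $P_H(K^g,n)\subseteq P_H(H,m)$ forces $\omega_H(\mathcal{O}(K^g))$ to be $H$-conjugate to $\omega_H(\mathcal{O}(H))$. The step I would spell out is the identification $\omega_H(\mathcal{O}(H))=\mathcal{O}(H)$: since $H/\mathcal{O}(H)$ is a finite $p$-group by definition of $\mathcal{O}$ and a finite group contains no non-trivial torus, there is no proper cotoral extension of $\mathcal{O}(H)$ inside $H$. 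As $\mathcal{O}(H)$ is normal in $H$, the $H$-conjugacy above then becomes an equality $\omega_H(\mathcal{O}(K^g))=\mathcal{O}(H)$, and unravelling the definition of $\omega_H$ this says exactly that $\mathcal{O}(K^g)\subseteq\mathcal{O}(H)$ is cotoral, i.e., $K^g$ is $p$-subcotoral in $H$. Hence $K$ is $G$-conjugate to a $p$-subcotoral subgroup of $H$.

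For the refinement when $n=1$, \Cref{lem:primerelations}(1) also forces $m=1$, so $P_H(K^g,1)\subseteq P_H(H,1)$, and the $n=1$ clause of the preceding Corollary gives that $\omega_H(K^g)$ is $H$-conjugate to $\omega_H(H)=H$; since $H$ is (trivially) normal in itself, $\omega_H(K^g)=H$, which is precisely the assertion that $K^g\subseteq H$ is cotoral, whence $K$ is $G$-conjugate to a cotoral subgroup of $H$. I do not expect a genuine obstacle here: the whole argument is formal once the earlier results are in hand, and the only point requiring any care is the bookkeeping with $\omega$ and $\mathcal{O}$ relative to $H$ rather than $G$ — in particular the identities $\omega_H(\mathcal{O}(H))=\mathcal{O}(H)$ and $\omega_H(H)=H$ and the upgrade from conjugacy to equality via normality — all of which follow directly from the definitions recalled in \Cref{sec:primeposet} and from \Cref{thm:burnside}.
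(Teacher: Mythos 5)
Your argument is exactly the paper's: apply \Cref{prop:changeofgroups}(i) to pass to an inclusion $P_H(K^g,n)\subseteq P_H(H,m)$ and then invoke the Burnside-ring corollary with ambient group $H$, using $\omega_H(\mathcal{O}(H))=\mathcal{O}(H)$ (resp.\ $\omega_H(H)=H$ when $n=1$, where $m=1$ is forced) to conclude that $K^g$ is $p$-subcotoral (resp.\ cotoral) in $H$. The only difference is that you spell out details the paper leaves implicit, namely the normality upgrade from $H$-conjugacy to equality and why $\mathcal{O}(H)$ admits no proper cotoral extension in $H$, and these justifications are correct.
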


\subsection{Inclusions for $p$-cotoral and $p$-subcotoral subgroups} \label{sec:cotoral} 
While the previous subsection was about establishing necessary conditions for the inclusion of prime ideals, we now turn to sufficient ones. We begin with Parts $iii)$ and $iv)$ of \Cref{thm:inclusions}, which via \Cref{prop:changeofgroups} reduce to the following:

\begin{prop} \label{prop:cotoral} Let $K\subseteq H$ be a normal inclusion of closed subgroups in $G$:
\begin{enumerate}
	\item If $H/K$ is a torus, then $P_G(K,n)\subseteq P_G(H,n)$ for all $n \in  \NN$.
	\item If $H/K$ is a cyclic $p$-group, then $P_G(K,n+1)\subseteq P_G(H,n)$ for all $n \in \NN$.
\end{enumerate}
\end{prop}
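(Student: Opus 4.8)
The plan is to combine the change-of-groups results of \Cref{prop:changeofgroups} with two inputs: the classification of $\Spc(\mathcal{SH}^c_{A,(p)})$ for finite abelian groups from \cite{BHN^+17}, and a Morava $K$-theory analogue of the classical Borel localization theorem for the circle. The latter is the genuinely new ingredient and will occupy most of the work.

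\emph{Reductions.} Given $K\trianglelefteq H$ inside $G$, \Cref{prop:changeofgroups} i) (applied with $g=e$, legitimate since $K\subseteq H$) lets me replace the ambient group $G$ by $H$, so I may assume $K$ is normal in $G$; then \Cref{prop:changeofgroups} ii) with $K'=K$ lets me replace $G$ by $G/K$. For (2) this leaves proving $P_Q(e,n+1)\subseteq P_Q(Q,n)$ for $Q=H/K$ a finite cyclic $p$-group; since $Q$ is abelian with $rk_p(Q)=1$, this is exactly the finite abelian classification \cite{BHN^+17}, the relevant inequality being $n+rk_p(Q)=n+1\leq n+1$. For (1), $H/K$ is a torus; pulling back its standard flag gives a chain $K=H_0\subset H_1\subset\dots\subset H_d=H$ of subgroups normal in $H$ with each $H_{i+1}/H_i\cong\T$, and for each $i$ two further applications of \Cref{prop:changeofgroups} (first i) to pass to ambient group $H_{i+1}$, then ii) with $K'=H_i$) reduce $P_G(H_i,n)\subseteq P_G(H_{i+1},n)$ to the single statement $P_{\T}(e,n)\subseteq P_{\T}(\T,n)$ for the circle. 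Composing these inclusions along the chain yields (1).

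\emph{The circle case.} The crux is to show that a finite $\T$-spectrum $X$ with $\Phi^e(X)$ of type $\geq n$ has $\Phi^{\T}(X)$ of type $\geq n$. The case $n=\infty$ will follow from the finite cases, since a finite spectrum whose finite-height Morava $K$-theories all vanish is contractible, so I assume $n$ finite. As in the proof of \Cref{prop:isotropylocconstant}, using \Cref{lem:finiteg} and that smashing with a representation sphere changes no geometric fixed point type, I may take $X=\Sigma^{\infty}A$ with $A$ a homotopy retract of the $p$-localization of a finite based $\T$-CW complex; then $\Phi^e(X)\simeq\Sigma^{\infty}A$ and $\Phi^{\T}(X)\simeq\Sigma^{\infty}(A^{\T})$, so it suffices to prove, for each finite $m$, that $\widetilde{K(m)}_*(A)=0$ implies $\widetilde{K(m)}_*(A^{\T})=0$. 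Since $A$ is finite, $\widetilde{K(m)}_*(A)=0$ forces $\widetilde{K(m)}^*(A)=0$, and feeding this into the cohomological Atiyah--Hirzebruch spectral sequence of the Borel fibration $A\to A_{h\T}\to B\T$ (which converges, $A$ being finite-dimensional) gives $\widetilde{K(m)}^*_{\T}(A):=\widetilde{K(m)}^*(A_{h\T})=0$. Then I apply the localization theorem for the complex-oriented theory $K(m)$ and the finite $\T$-complex $A$: the restriction $\widetilde{K(m)}^*_{\T}(A)\to\widetilde{K(m)}^*_{\T}(A^{\T})$ becomes an isomorphism after inverting the Euler class $x\in K(m)^2(B\T)$ of the standard character, because the relative group $\widetilde{K(m)}^*_{\T}(A/A^{\T})$ is a finite iterated extension of shifted copies of the groups $K(m)^*(BC)$ for the (finitely many, proper, hence finite cyclic) isotropy groups $C$ of $A\smallsetminus A^{\T}$, and on each such $K(m)^*(BC)$ the class $x$ restricts to the nilpotent Euler class of a faithful character. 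Since $\T$ acts trivially on $A^{\T}$, one has $\widetilde{K(m)}^*_{\T}(A^{\T})\cong\widetilde{K(m)}^*(A^{\T})[[x]]$, so combining $\widetilde{K(m)}^*_{\T}(A)=0$ with the localization isomorphism gives $\widetilde{K(m)}^*(A^{\T})((x))=0$, hence $\widetilde{K(m)}^*(A^{\T})=0$, hence $\widetilde{K(m)}_*(A^{\T})=0$, as desired.

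\emph{Main obstacle.} The reductions are formal; the work is in the circle case, where two points need care. First, the passage from genuine equivariance to the Borel construction works cleanly only because $\Phi^{\T}$ of a suspension spectrum is again a suspension spectrum, of the honest space-level fixed points, so no tom Dieck correction terms enter; for the genuine fixed point spectrum $X^{\T}$ this fails, as it carries additional summands indexed by the finite subgroups of $\T$ which are not killed by the hypothesis. Second, the localization isomorphism: establishing it requires identifying the proper isotropy groups of $A$ and checking nilpotence of $x$ on each $K(m)^*(BC)$ --- equivalently, that $K(m)^*(BC)$ is a finite-rank $K(m)^*$-module with nilpotent augmentation ideal, which comes from $[k]_F(x)$ being a unit multiple of a power of $x$ for the height-$m$ formal group law. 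This last point is also where the argument breaks down for $K(\infty)=H\F_p$, which is why $n=\infty$ is treated separately.
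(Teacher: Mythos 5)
Your proposal is correct, and on the decisive point it takes a recognizably different route from the paper. The reductions agree in substance: item (2) is, as in the paper, just \cite[Thm.~2.6]{BHN^+17} after applying \Cref{prop:changeofgroups}, and your flag of subtori $K=H_0\subset\cdots\subset H_d=H$ with two applications of \Cref{prop:changeofgroups} per step is equivalent to the paper's reduction via enhanced geometric fixed points and induction on the rank of the torus. For the crucial circle case, however, the paper argues on the level of genuine $\T$-spectra: it forms $F_\T(X,\underline{K(n)})$, observes that this Borel-complete spectrum is contractible because its underlying spectrum is, and then uses the computation $\pi_*\Phi^\T(\underline{K(n)})\cong K(n)_*\llbracket x\rrbracket(x^{-1})\neq 0$ (\Cref{thm:moravageom}) together with dualizability of $X$ to conclude that $\Phi^\T(X)\wedge K(n)\simeq 0$. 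You instead work entirely space-level with $A$ and $A^\T$, deducing Borel $K(m)$-acyclicity of $A$ from underlying acyclicity and then invoking the classical localization theorem for the complex-oriented theory $K(m)$, with nilpotence of the Euler class on $K(m)^*(BC)$ for finite cyclic $C$ doing the work. The two proofs rest on the same formal-group input (that $[k]_F(x)$ is a unit multiple of a power of $x$ up to higher terms at height $m$, which fails at height $\infty$), but your packaging avoids $\Phi^\T$ of Borel spectra altogether, at the price of manipulating Borel cohomology of possibly infinite complexes; the paper's packaging isolates that computation once and for all in \Cref{thm:moravageom}, which it then reuses elsewhere.

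Two small points need tightening but are not gaps. First, by \Cref{lem:finiteg} your $A$ is only a retract of the $p$-localization of a finite $\T$-CW complex, so it is not finite-dimensional in the naive sense; the clean fix is to note that the localization telescope is still a finite-dimensional $\T$-CW complex with finitely many isotropy types, run both the Borel-acyclicity step (cellular induction over the skeleta of $E\T$ plus a $\lim^1$ argument, rather than an appeal to Atiyah--Hirzebruch convergence, since the infinite-dimensionality sits in the base $B\T$) and the localization theorem for that telescope, using a uniform nilpotence exponent for $x$ over the finitely many proper isotropy types and the finite skeletal filtration, and then pass to the retract $A$ by naturality. Second, your uniform treatment of $n=\infty$ by intersecting the finite-height cases is exactly what the paper does, so no issue there.
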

As mentioned above, the second item is \cite[Thm. 2.6]{BHN^+17}. In order to prove the first, we will use a small computation of the geometric $\T$-fixed points of Morava $K$-theories proved in \Cref{ssec:phibkn} below.

\begin{proof}[Proof of \Cref{prop:cotoral}] It only remains to prove the first item. Since the enhanced version of the geometric fixed points $\Phi^K(X)$ is a finite $H/K$-spectrum satisfying $\Phi^{H/K}(\Phi^K(X))\simeq \Phi^H(X)$, it suffices to consider the case where $K$ is the trivial group and $H=G=\T^k$ is a torus. By induction on $k$ we can further reduce to $k=1$.

Hence we are given a finite $\T$-spectrum $X$ such that $K(n)_*(\Phi^{\{e\}}(X))=0$, and we want to show that $K(n)_*(\Phi^{\T}(X))$ is also trivial. We consider the function spectrum $F_\T(X,\underline{K(n)})$ from $X$ into the $\T$-Borel spectrum for $K(n)$ (with trivial naive $\T$-action), see \Cref{sec:borelcompletion}. Then $F_\T(X,\underline{K(n)})$ is itself Borel complete, since there is an equivalence 
\[ F_\T(E\T_+,F_\T(X,\underline{K(n)}))\simeq F_\T(X,F_\T(E\T_+,\underline{K(n)}))\simeq F_\T(X,\underline{K(n)}). \]
Moreover, the underlying non-equivariant spectrum of $F_\T(X,\underline{K(n)})$ is $F(\Phi^{\{e\}}(X),K(n))$ (since $\Phi^{\{e\}}(X)\simeq \res_e^\T(X)$), which is contractible by assumption. Hence $F_\T(X,\underline{K(n)})$ is equivariantly contractible and therefore its geometric fixed points $\Phi^\T(F_\T(X,\underline{K(n)}))$ are also contractible. Since $X$ is finite, we can compute these geometric fixed points as
\[ 
0 \simeq \Phi^\T(F_\T(X,\underline{K(n)}))\simeq F_\T(\Phi^\T(X),\mathbb{S})\wedge \Phi^\T(\underline{K(n)}), 
\]
see the discussion preceding \Cref{cor:smashnilpotent}. By \Cref{thm:moravageom}, $\Phi^\T(\underline{K(n)})$ is a non-trivial $K(n)$-module and hence contains $K(n)$ as a retract. So we conclude that $F_\T(\Phi^\T(X),\mathbb{S})\wedge K(n)$ is trivial and hence so is $\Phi^\T(X)\wedge K(n)$, using the K{\"u}nneth formula for Morava $K$-theory.

To obtain the statement also for $n=\infty$, one uses that $P_G(H,\infty)$ is the intersection of all $P_G(H,n)$ with $n<\infty$.
\end{proof}

We are left with showing Part $v)$ of \Cref{thm:inclusions}:
\begin{prop}
Let $K$ and $H$ be two closed subgroups of $G$. If $K$ is $p$-subcotoral in $H$, then there is an inclusion $P_G(K,\infty)\subseteq P_G(H,\infty)$.
\end{prop}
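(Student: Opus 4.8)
The plan is to reduce the statement to the already-established cotoral case, Part $iii)$ of \Cref{thm:inclusions} (via \Cref{prop:cotoral}(1)), using the change-of-groups machinery of \Cref{prop:changeofgroups} together with the inflation properties of geometric fixed points. By definition, $K$ being $p$-subcotoral in $H$ means that $\mathcal{O}(K)\subseteq \mathcal{O}(H)$ and that this inclusion is cotoral, i.e.\ normal with quotient a torus. We want to show $P_G(K,\infty)\subseteq P_G(H,\infty)$, which concretely asks: if $X$ is a finite $G$-spectrum with $\Phi^K(X)\simeq 0$, then $\Phi^H(X)\simeq 0$ as well.

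First I would use \Cref{prop:changeofgroups}$i)$ to reduce to the case $H=G$: after replacing $K$ by a suitable $G$-conjugate we may assume $K\subseteq H$ and it suffices to show $P_H(K,\infty)\subseteq P_H(H,\infty)$, so we rename $H$ to $G$ and assume $K\subseteq G$ is $p$-subcotoral with $G$ the ambient group. Now I would pass through the three intermediate subgroups $\mathcal{O}(K)\subseteq \mathcal{O}(G)$ and build the chain
\[
P_G(K,\infty)\subseteq P_G(\mathcal{O}(K),\infty)\subseteq P_G(\mathcal{O}(G),\infty)\subseteq P_G(G,\infty),
\]
and argue each inclusion in turn. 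For the first inclusion, $\mathcal{O}(K)$ is normal in $K$ with quotient a finite $p$-group, so it is subnormal in $K$ with each successive quotient $\Z/p$; iterating Part $iv)$ of \Cref{thm:inclusions} (equivalently \Cref{prop:cotoral}(2)) along such a chain gives $P_K(\mathcal{O}(K),n)\subseteq P_K(K,n-c)$ for appropriate $c$, and in the limit $n=\infty$ this yields $P_K(\mathcal{O}(K),\infty)\subseteq P_K(K,\infty)$; combined with \Cref{prop:changeofgroups}$i)$ (going up to $G$) this gives $P_G(K,\infty)\subseteq P_G(\mathcal{O}(K),\infty)$ — wait, the direction: since $\mathcal{O}(K)\subseteq K$, the containment runs $P_G(\mathcal{O}(K),\infty)\supseteq$ requires care, so instead I would invoke Part $iv)$ directly in the form $P_G(\mathcal{O}(K),n+1)\subseteq P_G(K,n)$ iterated, which in the $\infty$-limit gives exactly $P_G(\mathcal{O}(K),\infty)\subseteq P_G(K,\infty)$; this is not what we want. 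The cleaner route is to treat all three steps uniformly: for the middle step $\mathcal{O}(K)\subseteq\mathcal{O}(G)$ is cotoral by hypothesis, so Part $iii)$ gives $P_G(\mathcal{O}(K),\infty)\subseteq P_G(\mathcal{O}(G),\infty)$; for the outer steps one uses that $K/\mathcal{O}(K)$ and $G/\mathcal{O}(G)$ are finite $p$-groups, hence each admits a subnormal filtration with cyclic-$p$-quotients, and Part $iv)$ then gives, e.g., $P_G(\mathcal{O}(G),n+k)\subseteq P_G(G,n)$ where $k$ is the number of steps, so passing to $n\to\infty$ yields $P_G(\mathcal{O}(G),\infty)\subseteq P_G(G,\infty)$, and similarly $P_G(K,\infty)=P_G(\mathcal{O}(K),\infty)$ since $P_G(\mathcal{O}(K),\infty)\subseteq P_G(K,\infty)$ from Part $iv)$ and the reverse inclusion $P_G(K,\infty)\subseteq P_G(\mathcal{O}(K),\infty)$ holds because an $X$ with $\Phi^K(X)=0$ automatically has $K(n)_*\Phi^K(X)=0$ and one only needs contractibility, which is preserved under restricting the fixed-point subgroup down a cotoral-free filtration — more precisely, $P_G(K,\infty)\subseteq P_G(K',\infty)$ for any $K'\subseteq K$ with $K/K'$ finite is immediate because $\Phi^{K}(X)\simeq \Phi^{K/K'}(\Phi^{K'}(X))$ so if $\Phi^{K'}(X)$ were nonzero then so is a shift of it and Morava $K$-theory detects nontriviality... actually the correct direction is the opposite, so I would simply chain the three inclusions
\[
P_G(K,\infty)\overset{(a)}{\subseteq} P_G(\mathcal{O}(K),\infty)\overset{(b)}{\subseteq} P_G(\mathcal{O}(G),\infty)\overset{(c)}{\subseteq} P_G(G,\infty),
\]
where $(b)$ is Part $iii)$, and $(a)$, $(c)$ follow from iterating Part $iv)$ in the form $P_G(N,\infty)\subseteq P_G(M,\infty)$ whenever $M/N$ is a finite $p$-group and $N$ normal in $M$ — this last fact is proved by filtering $M/N$ by cyclic $p$-quotients and applying Part $iv)$ finitely many times, noting that $P_G(N,n+1)\subseteq P_G(M,n)$ for all finite $n$ forces $P_G(N,\infty)=\bigcap_n P_G(N,n)\subseteq \bigcap_n P_G(M,n)=P_G(M,\infty)$.

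The main obstacle is bookkeeping the directions of inclusion and ensuring the $\infty$-limit argument is watertight: the clean statement I want to isolate as a sublemma is that if $N\trianglelefteq M$ with $M/N$ a finite $p$-group then $P_G(N,\infty)\subseteq P_G(M,\infty)$, proved by a subnormal filtration and repeated application of Part $iv)$ followed by intersecting over all $n$; granting this sublemma and Part $iii)$ for the cotoral middle step, the three-term chain above closes immediately, since a $p$-subcotoral inclusion $K\subseteq H$ is by definition precisely a normal chain $\mathcal{O}(K)\trianglelefteq\mathcal{O}(H)$ with torus quotient sandwiched between two finite-$p$-quotient extensions $\mathcal{O}(K)\trianglelefteq K$ and $\mathcal{O}(H)\trianglelefteq H$. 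Finally, reducing the general $K\subseteq H$ inside $G$ to $H=G$ via \Cref{prop:changeofgroups}$i)$ as in the proof of \Cref{cor:psubcotoral} completes the argument; I expect this reduction step and the $\infty$-limit sublemma to be the only places requiring genuine care.
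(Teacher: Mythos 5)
Your chain breaks at step $(a)$: the inclusion $P_G(K,\infty)\subseteq P_G(\mathcal{O}(K),\infty)$ is false whenever $\mathcal{O}(K)\neq K$, and it certainly does not follow from Part $iv)$ of \Cref{thm:inclusions}, which gives the \emph{opposite} containment $P_G(\mathcal{O}(K),\infty)\subseteq P_G(K,\infty)$ (prime of the smaller group inside prime of the larger group). Concretely, by \Cref{lem:primerelations}(1) an inclusion $P_G(K,n)\subseteq P_G(H',m)$ forces $K$ to be subconjugate to $H'$, so $P_G(K,\infty)\subseteq P_G(\mathcal{O}(K),\infty)$ is impossible for $\mathcal{O}(K)\subsetneq K$; already for $G=K=\Z/p$, $\mathcal{O}(K)=e$, the spectrum $X=\Sigma^\infty(\Z/p)_+$ has $\Phi^{\Z/p}(X)\simeq 0$ but nontrivial underlying spectrum. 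You noticed this direction problem twice in your write-up but then reasserted the chain anyway; there is no way to repair it within your three-term decomposition $K\supseteq\mathcal{O}(K)\subseteq\mathcal{O}(H)\subseteq H$, because the first leg runs the wrong way through the poset.

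The fixable kernel of the idea is to go \emph{up} from $K$ rather than down: replace $K$ by $\omega_H(K)$ (Part $iii)$ gives $P_G(K,\infty)\subseteq P_G(\omega_H(K),\infty)$) and then enlarge by normal extensions of $p$-power index using Part $iv)$. But this is where the real difficulty lies, and it is absent from your proposal: such a chain $K\subseteq K_1\subseteq K_2\subseteq\cdots$ need not reach $H$ in finitely many steps. For example, at $p=2$ a reflection subgroup $C_2\subseteq O(2)$ is $2$-subcotoral, yet the only available steps are $C_2\subseteq D_4\subseteq D_8\subseteq\cdots$, which never terminate at $O(2)$. The paper's proof handles exactly this: tom Dieck's theorem (that a proper $p$-subcotoral subgroup has infinite Weyl group or Weyl group of order divisible by $p$) guarantees the chain can always be continued, one passes to the closure $L$ of the union using that type functions of finite $G$-spectra are locally constant (\Cref{prop:isotropylocconstant}) to get $P_G(K,\infty)\subseteq P_G(L,\infty)$, checks via the Burnside ring description (\Cref{thm:burnside}) that $L$ is again $p$-subcotoral in $H$, and then repeats, terminating because the dimension strictly increases. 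Both the limit argument and the Weyl-group input are essential and missing from your proof, so as it stands the argument has a genuine gap beyond the sign error in step $(a)$.
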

\begin{proof}
The key to the proof is the following statement, due to tom Dieck \cite[Thm. 4]{tD75}: If $K\subseteq H$ is $p$-subcotoral and $K\neq H$, then either $K$ has infinite Weyl group $W_HK$ or the order of $W_HK$ is divisible by $p$. This leads to the following algorithm to construct $H$ out of $K$: First replace $K$ by $\omega_H(K)$, which we can do since we know that $P_G(K,\infty)\subseteq P_G(\omega_H(K),\infty)$ and $\omega_H(K)$ is again $p$-subcotoral in $H$. Hence, it is enough to consider the case where $K$ has finite Weyl group. If $K=H$, we are done. If not, we can find a normal extension $K\subseteq K_1$ (inside $H$) of index $p^k$ with $k\geq 1$. Then $P_G(K,\infty)\subseteq P_G(K_1,\infty)$ and $\mathcal{O}(K_1)=\mathcal{O}(K)$. Hence, $K_1$ is also $p$-subcotoral in $H$ and we can replace $K$ by $K_1$. This way we obtain a subnormal chain $K\subseteq K_1\subseteq K_2\subseteq \hdots$. If at some point we have $K_i=H$, we are done. If not, we consider the closure $L$ of the union of the $K_i$. Since we have $P_G(K,\infty)\subseteq P_G(K_i,\infty)$ for all $i$, it follows that we also have $P_G(K,\infty)\subseteq P_G(L,\infty)$, since the type function $\type{X}$ of every finite $G$-spectrum is locally constant and the $K_i$ converge to $L$ in $\Sub(G)$.

Further, we claim that $L$ is also $p$-subcotoral in $H$. There are different ways to see this. One is to use tom Dieck's theorem~(\Cref{thm:burnside}, Part (ii)) that $L\subseteq H$ is $p$-subcotoral if and only if the primes $q(L,\infty)$ and $q(H,\infty)$ are the same in the Burnside ring of $H$. We know that $q(H,\infty)$ is equal to $q(K_i,\infty)$ for all $i$, and a similar argument as in the proof of \Cref{prop:isotropylocconstant} shows that then also $q(H,\infty)$ is equal to $q(L,\infty)$, using that for every $H$-representation $V$ we have $V^L=V^{K_i}$ for almost all $i$. Hence, we can replace $K$ by $L$ and proceed. Since the dimension of $L$ is larger than that of $K$, this process must eventually terminate at the full group $H$. This finishes the proof.
\end{proof}

\subsection{Geometric fixed points of Borel-equivariant Morava $K$-theories}\label{ssec:phibkn}
We now give the computation needed for the proof of the first part of \Cref{prop:cotoral}, and also explain that this argument is special for tori. Let $\underline{K(n)}$ denote the $G$-equivariant Borel spectrum for the $n$-th Morava $K$-theory $K(n)$, see \Cref{sec:borelcompletion}. Then we have:
\begin{theorem}\label{thm:moravageom}
Suppose $G$ is a compact Lie group, $n\ge 0$, and $r \ge 0$, then
\[
\pi_*\Phi^{G}(\underline{K(n)}) \cong
	\begin{cases}
		K(n)_*\llbracket x_1,\ldots,x_r\rrbracket(x_1^{-1},\ldots,x_r^{-1}) & \text{if } G \cong \T^{\times r}, \\
		0 & \text{otherwise.}
	\end{cases}
\]
\end{theorem}
\begin{proof} 
We start with the case where $G=\T$ is the circle group. Since $K(n)$ is a complex oriented cohomology theory and using \cite[Prop.~3.20]{GM95a}, we can compute $\Phi_*^{\T}(\underline{K(n)})$ as the localization of 
\[ 
\pi_*^\T(\underline{K(n)})\cong K(n)_*(B\T)\cong K(n)_*\llbracket x\rrbracket 
\]
at Euler classes $e_V$, where $V$ ranges through a countable collection $\mathcal{C}$ of complex $\T$-representations satisfying two properties:
\begin{enumerate}
	\item The fixed points $V^{\T}$ are trivial for all $V\in \mathcal{C}$.
	\item	For every proper closed subgroup $H$ of $\T$ there exists a representation $V\in \mathcal{C}$ such that $V^H\neq 0$.
\end{enumerate}
This fact uses that under these assumptions the unit sphere in the representation $\sum_{\N} \sum_{V\in \mathcal{C}} V$ is a model for the universal space for the family of proper subgroups. For $n>0$, let $\rho_n$ be the one-dimensional complex representation given by the $n$-fold tensor power of the tautological representation $\rho_1$. Then the collection $\{\rho_n\}_{n>0}$ satisfies the two properties above. The Euler class of $\rho_n$ is given by the $n$-series $[n]_F(x)$, where $F$ is the formal group law associated to the complex orientation of $K(n)$. We claim that it is in fact enough to invert the single element $x$, i.e., the $1$-series $[1]_F(x)$, and hence that we have an isomorphism
\[ 
\Phi_*^{\T}(\underline{K(n)})\cong K(n)_*\llbracket x\rrbracket(x^{-1}) 
\]
as claimed. To show that, it is enough to see that the leading term of $[n]_F(x)$ is a unit for all $n>1$. We first consider the case where $n=p^k$ is a power of $p$. If $k=1$, then $[p]_F(x)$ has leading term $v_n\cdot x^{p^n}$, and $v_n$ is a unit in $K(n)_*$. For larger $k$, we can use the formula $[p^k]_F(x)=[p]_F([p^{k-1}]_F(x))$ to see that the leading coefficient of $[p^k]_F(x)$ is given by $v_n$ multiplied with the $p^n$-fold power of the leading coefficient of the $p^{k-1}$-series, which is a unit by induction. Finally we can write a general $n$ as $p^k\cdot q$ with $q$ not divisible by~$p$. Then $[n]_F(x)= [q]_F([p^k]_F(x))$. Since $[q]_F(x)$ has leading term $q\cdot x$, we see that the leading coefficient of $[n]_F(x)$ is given by $q$ times the leading coefficient of $[p^k]_F(x)$ and hence a unit.

Next we deal with the case $G=\T^{\times r}$ of a higher rank torus. There is an isomorphism
\[ 
\pi_*^{\T^{\times r}}(\underline{K(n)})\cong K(n)_*(B\T^{\times r})\cong K(n)_*\llbracket x_1,\hdots,x_r\rrbracket,
\]
and the ring $\pi_*\Phi^{\T^{\times r}}(\underline{K(n)})$ is obtained from this by inverting all Euler classes of non-trivial irreducible complex representations of $\T^{\times r}$. The $x_i$ are the Euler classes associated to the projections $\T^{\times r}\to \T$, hence we need to see that after inverting all $x_i$ the Euler class of any non-trivial irreducible complex representation becomes invertible. Every such irreducible representation $V$ is of the form $V_1\otimes_{\C} \hdots \otimes_{\C} V_r$, where $V_i$ is the pullback of an irreducible representation of $\T$ along the $i$-th projection. Since $V$ is non-trivial, one of the $V_i$ needs to be non-trivial, which by symmetry we can assume to be $V_1$. Since the Euler class of a tensor product of $1$-dimensional representations is given by the formal sum of the Euler classes of the factors, we see that
\[ e_V=e_{V_1}+_F \hdots +_F e_{V_r}. \]
Now each $e_{V_i}$ is a power series in the variable $x_i$. Furthermore, since $V_1$ is non-trivial we have seen in the rank $1$ case above that $e_{V_1}$ becomes invertible after adjoining $x_1^{-1}$. But this implies that $e_V$ also becomes invertible in $K(n)_*\llbracket x_1,\ldots,x_r\rrbracket(x_1^{-1})$, and therefore in particular in $K(n)_*\llbracket x_1,\ldots,x_r\rrbracket(x_1^{-1},\ldots,x_r^{-1})$, since when viewed as a power series over $K(n)_*\llbracket x_1\rrbracket$ it is given by $e_{V_1}$ plus higher order terms. This finishes the proof of the torus case.

We now turn to the second part and show that if $G$ is not a torus, the $G$-geometric fixed points of $\underline{K(n)}$ are trivial. We begin with the case where $G$ is a non-trivial finite group, which follows directly from the Tate vanishing of $K(n)$: By \cite[Thm. 1.1]{GS96}, the $G$-spectrum $\underline{K(n)}\wedge \tilde{E}G$ is trivial, where $\tilde{E}G$ is the cofiber of $EG_+ \to S^0$. But $\Phi^G(\tilde{E}G)$ is equivalent to the sphere $S^0$, so 
\[ \Phi^G(\underline{K(n)})\simeq \Phi^G(\underline{K(n)}\wedge \tilde{E}G)\simeq 0. \]
Next we consider the case where $G$ is disconnected but not necessarily discrete. The quotient map $p\colon G\to \pi_0(G)$ induces a ring map
\[ 
K(n)^*(B\pi_0(G))\to K(n)^*(BG) 
\]
sending an Euler class $e_V$ to the pulled back Euler class $e_{p^*(V)}$. As $p$ is surjective, the $G$-fixed points of $p^*(V)$ are trivial if the $\pi_0(G)$-fixed points of $V$ are trivial. It follows that the geometric fixed point map $K(n)^*(BG)\to \Phi^G_*(\underline{K(n)})$ factors through the localization of $K(n)^*(BG)$ at the Euler classes of the form $e_{p^*(V)}$ with $V^{\pi_0(G)}=0$. But this localization is trivial, since it is an algebra over 
\[ 
K(n)^*(B\pi_0(G))[e_V^{-1}\ |\ V^{\pi_0(G)}=0]\cong \Phi^{\pi_0(G)}_*(\underline{K(n)}),
\]
which we have just seen to be trivial.

The remaining case of $G$ connected holds in fact more generally  than for Morava $K$-theory: 
\begin{lemma} Let $E$ be any Borel-complete $G$-spectrum for $G$ a connected compact Lie group which is not a torus. Then $\Phi^G(E)$ is contractible.
\end{lemma}
\begin{proof} Write $T$ for a maximal torus in $G$ and $N = N_G(T)$ for the normalizer of $T$ in $G$. Since $N$ is a proper subgroup of $G$, the Peter--Weyl theorem implies that there exists an orthogonal representation $V$ of $G$ with the property that $V^G=0$ but $V^N\neq 0$. We claim that the map
\[ E\wedge e_V\colon E\simeq E\wedge S^0\to E\wedge S^V \] 
is trivial. Since $e_V$ induces the identity on geometric fixed points and consequently so does $E\wedge e_V$, this implies that $\Phi^G(E)$ is contractible.

The proof of the claim is a standard transfer argument, using that the Euler characteristic of the quotient $G/N$ is $1$. We consider the composition
\[ G_+ \to (G\times G/N)_+\to G_+ \]
obtained by smashing the transfer map $G/G_+\to G/N_+$ and the restriction map $G/N_+\to G/G_+$ with $G_+$. This composition is given by multiplication with the Euler characteristic of $G/N$ and hence an equivalence. It follows that $X\to X\wedge G/N_+\to X$ is an equivalence for any spectrum $X$ build out of free cells, and in particular for $X$ the suspension spectrum of $EG_+$. Using that $E\simeq F(EG_+,E)$ by assumption, it follows that $E$ is naturally a retract of the function spectrum $F(G/N_+,E)$. Hence it suffices to show that $F(G/N_+,E)\wedge e_V\simeq F(G/N_+,E\wedge e_V)$ is trivial. But this follows from the fact that $e_V$ is trivial when restricted to $N$-spectra (as $V^{N}\neq 0$), since the functor $F(G/N_+,-)$ is equivalent to the composition of the restriction to $N$-spectra followed by coinduction back up to $G$-spectra.
\end{proof}
This finishes the proof of \Cref{thm:moravageom}.
\end{proof}

\begin{example}
We note that the corresponding vanishing statement for the Tate construction does not hold. Indeed if $G$ is a compact Lie group which is not finite then $\pi_*(H\Q^{hG})=H^{-*}(BG; \Q)$ is non-zero in arbitrarily large negative degrees. The norm sequence \cite[Page 5]{GM95} shows that $H^{-*}(BG)=H\Q^{hG}_*\rightarrow H\Q^{tG}_*$ is an isomorphism in negative degrees, so in particular $H\Q^{tG}_*\neq 0$.\\
\end{example}

\subsection{Reduction to extensions of $p$-groups by abelian groups} \label{sec:reduce}

In the final part of this section we show that the change of group results of \Cref{prop:changeofgroups} can be used to reduce \Cref{quest} to extensions of finite $p$-groups $P$ by abelian compact Lie groups $A$ with $|\pi_0(A)|$ coprime to $p$.

\begin{prop}
Let $G$ be a compact Lie group. The poset structure on $\Spc(\SH_{G,(p)}^c)$ is determined by the poset structure on $\Spc(\SH_{Q,(p)}^c)$ for all subquotients $Q$ of $G$ which are extensions of a finite $p$-group by an abelian compact Lie group $A$ with $|\pi_0(A)|$ coprime to $p$.
\end{prop}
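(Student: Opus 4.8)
The plan is to feed an arbitrary inclusion $P_G(K,n)\subseteq P_G(H,m)$ through the change-of-groups results of \Cref{prop:changeofgroups}, with $n$ and $m$ fixed throughout, until the ambient group has the asserted shape. First, any such inclusion forces $K$ to be $G$-conjugate into $H$ (\Cref{lem:primerelations}), so by \Cref{prop:changeofgroups}~$i)$ it holds if and only if $P_H(K^g,n)\subseteq P_H(H,m)$ for one of the finitely many (cf.\ \Cref{rem:subconjugacy}) $H$-conjugacy classes of $G$-conjugates $K^g\subseteq H$; as $H$ is a compact Lie group all of whose subquotients are subquotients of $G$, we may rename $H=G$ and are reduced to deciding the relations $P_G(K,n)\subseteq P_G(G,m)$. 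Next, if $N\trianglelefteq G$ is a closed subgroup contained in $K$, then \Cref{prop:changeofgroups}~$ii)$ rewrites this relation as $P_{G/N}(K/N,n)\subseteq P_{G/N}(G/N,m)$; taking $N=\bigcap_{g\in G}K^g$ we may assume that $K$ contains no nontrivial closed subgroup normal in $G$. Finally, \Cref{cor:psubcotoral} guarantees that the relation can hold only if $K$ is $G$-conjugate to a $p$-subcotoral subgroup of $G$ --- a condition depending on $(G,K)$ alone, which is therefore known --- so in the remaining case we may, after conjugating $K$ (which does not change the primes involved), assume $\mathcal O(K)\trianglelefteq\mathcal O(G)$ with $\mathcal O(G)/\mathcal O(K)$ a torus.

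The decisive step is to observe that these hypotheses let us arrange $\mathcal O(G)$ to be abelian with $|\pi_0(\mathcal O(G))|$ coprime to $p$, so that the canonical extension $1\to\mathcal O(G)\to G\to G/\mathcal O(G)\to 1$ realizes $G$ as one of the allowed groups $Q$. Indeed, since $\mathcal O(G)/\mathcal O(K)$ is abelian, the closed commutator subgroup $C=[\mathcal O(G),\mathcal O(G)]$ is contained in $\mathcal O(K)\subseteq K$; it is characteristic in $\mathcal O(G)$ and hence normal in $G$, so \Cref{prop:changeofgroups}~$ii)$ applies once more and lets us pass to $G/C$. Because $\mathcal O$ is compatible with surjective quotients we have $\mathcal O(G/C)=\mathcal O(G)/C$, which is abelian; and since $\mathcal O$ is idempotent, $\mathcal O(G/C)$ has no nontrivial finite $p$-group quotient, which for an abelian compact Lie group means precisely that its group of components has order coprime to $p$. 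As $G/\mathcal O(G)$ is by construction a finite $p$-group, $G/C$ now has the required form, and $P_{G/C}(K/C,n)\subseteq P_{G/C}(G/C,m)$ is simply a relation in the poset of $\Spc(\mathcal{SH}^c_{G/C,(p)})$.

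It remains to check termination: the commutator step is needed at most once, since $\mathcal O$ of any surjective quotient of a group whose $\mathcal O$ is abelian is again abelian, while each application of \Cref{prop:changeofgroups}~$ii)$ used to restore triviality of the normal core of $K$ replaces the current group by a proper quotient; the resulting chain of kernels inside the starting group is strictly increasing, hence finite by the ascending chain condition for closed subgroups of a compact Lie group. Feeding this back through \Cref{prop:changeofgroups}~$i)$, together with the (combinatorial, hence known) subconjugacy data of $G$, recovers every inclusion of primes in $\Spc(\mathcal{SH}^c_{G,(p)})$ from the posets of the allowed $Q$'s. I expect the genuine obstacle to be the structural input of the second paragraph: recognizing that $p$-subcotorality of $K$ delivers $[\mathcal O(G),\mathcal O(G)]\subseteq K$, so that the only obstruction to $\mathcal O(G)$ being abelian can be killed by an allowed change of groups, together with the careful verification of the behaviour of $\mathcal O(-)$ (and of $\omega_G(-)$, used via the Burnside-ring bound of \Cref{cor:psubcotoral}) under the subquotient operations involved.
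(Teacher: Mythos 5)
Your argument is correct and is essentially the paper's own proof: reduce via \Cref{prop:changeofgroups}~$i)$ and \Cref{cor:psubcotoral} to a $p$-subcotoral inclusion $K\subseteq H$ with $H$ the ambient group, observe that the cotoral (hence abelian) quotient $\mathcal{O}(H)/\mathcal{O}(K)$ forces $C=[\mathcal{O}(H),\mathcal{O}(H)]\subseteq K$ with $C$ normal in $H$, and pass to $H/C$ by \Cref{prop:changeofgroups}~$ii)$, which is an extension of the finite $p$-group $H/\mathcal{O}(H)$ by the abelian group $\mathcal{O}(H)/C$ whose $\pi_0$ is $p$-perfect. Your additional steps (quotienting by the normal core of $K$ and the termination check) are harmless but unnecessary, since a single application of the commutator quotient already lands in a group of the required form.
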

\begin{proof}
Recall that by \Cref{prop:changeofgroups} Part $i)$ and \Cref{cor:psubcotoral}, a complete understanding of the poset structure on $\Spc(\SH_{G,(p)}^c)$ will follow from understanding inclusions $P_H(K,n+i)\subseteq P_H(H,n)$, where $K$ is a $p$-subcotoral subgroup of $H$. We now fix such a $p$-subcotoral extension $K\subseteq H$ and consider the commutator subgroup $C=[\mathcal{O}(H),\mathcal{O}(H)]$ of the minimal normal subgroup $\mathcal{O}(H)$ of $H$ of $p$-power index. Since $K$ is $p$-subcotoral, the group $\mathcal{O}(K)$ sits cotorally inside $\mathcal{O}(H)$. Hence, it must contain the commutator subgroup $C$. It follows that so does $K$. Moreover, since $\mathcal{O}(H)$ is normal in $H$ and  $C$ is preserved by any automorphism of $\mathcal{O}(H)$, it follows that $C$ is also normal in $H$. Thus we can apply Part $ii)$ of \Cref{prop:changeofgroups} to find that there is an inclusion $P_H(K,n+i)\subseteq P_H(H,n)$ if and only if there is an inclusion $P_{H/C}(K/C,n+i)\subseteq P_{H/C}(H/C,n)$.

Now the group $H/C$ sits in the short exact sequence
\[ 
1\to \mathcal{O}(H)/C \to H/C\to H/\mathcal{O}(H) \to 1. 
\]
The group $P=H/\mathcal{O}(H)$ is a finite $p$-group. Moreover, $A=\mathcal{O}(H)/C$ is an abelian compact Lie group whose group of components $\pi_0(A)$ is $p$-perfect (as that of $\mathcal{O}(H)$ is). Thus, the order of $\pi_0(A)$ is coprime to $p$. This yields the desired reduction.
\end{proof}

\begin{remark} If $G$ is finite, the poset structure on $\Spc(\SH_{G,(p)}^c)$ is in fact determined by the poset structure on $\Spc(\SH_{Q,(p)}^c)$ for all subquotients $Q$ that are $p$-groups, see \cite[Prop. 6.11]{BS17b}. This is because for every $p$-subnormal inclusion $K\subseteq H$ the $p$-perfection $\mathcal{O}(H)$ is a normal subgroup of $H$ that is contained in $K$  and such that $H/\mathcal{O}(H)$ is a $p$-group. By \Cref{prop:changeofgroups}, there is an inclusion $P_H(K,n+i)\subseteq P_H(H,n)$ if and only if there is an inclusion $P_{H/\mathcal{O}(H)}(K/\mathcal{O}(H),n+i)\subseteq P_{H/\mathcal{O}(H)}(H/\mathcal{O}(H),n)$, reducing the problem to the poset structure on $\Spc(\SH_{H/\mathcal{O}(H),(p)}^c)$.

One could hope that for compact Lie groups the poset structure can be reduced to the poset structure for those subquotients which are extensions of finite $p$-groups by tori (i.e., $p$-toral groups), but we have been unable to see this. The problem is that given a $p$-subcotoral inclusion $K\subseteq H$, in general there exists no subgroup $K'\subseteq K$ that is normal in $H$ and for which $H/K'$ is a $p$-toral group.
\end{remark}

\section{Abelian compact Lie groups} \label{sec:abelian}
The results of the previous section allow us to give a complete description of the Balmer spectrum for abelian compact Lie groups $A$. Recall from \Cref{sec:topology} that the topology on $\Spc(\mathcal{SH}^c_{A,(p)})$ and the classification of its tt-ideals are determined by the set of admissible functions $f\colon\Sub(A)\to \NN$ and the Hausdorff topology on $\Sub(A)$. We say that a compact Lie group is \emph{p-toral} if it is an extension of a finite $p$-group by a torus, and an inclusion $K\subseteq H$ is \emph{$p$-cotoral} if it is normal with $p$-toral quotient $H/K$. Furthermore, given a finite abelian $p$-group $A'$, we write $rk_p(A')$ for the rank of $A'$, i.e., the dimension of $A'\otimes \Z/p$ over $\Z/p$. We then have:

\begin{theorem}  \label{thm:abelian}
Let $A$ be an abelian compact Lie group. A function $f\colon\Sub(A)/A \to \NN$ is admissible if and only if for all $p$-cotoral inclusions $K\subseteq H$ of closed subgroups of $A$ the inequality
\begin{equation} \label{eq:abelian} f(H)+rk_p(\pi_0(H/K)) \geq f(K) \tag{$\star$} \end{equation}
holds.
\end{theorem}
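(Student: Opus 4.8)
The plan is to deduce \Cref{thm:abelian} from an explicit description of the poset of primes, which I will establish first: for closed subgroups $K\subseteq H$ of $A$ and $n,m\in\NN_{>0}$ one has $P_A(K,n)\subseteq P_A(H,m)$ if and only if the inclusion $K\subseteq H$ is $p$-cotoral (equivalently, since $A$ is abelian, $\pi_0(H/K)$ is a $p$-group) and $n\geq m+rk_p(\pi_0(H/K))$. Once this is known, \Cref{thm:abelian} follows formally: a function $f$ fails to be admissible exactly when $P_A(K,f(K))\subseteq P_A(H,f(H)+1)$ for some $K,H$ with $f(H)<\infty$, and by the displayed equivalence this happens precisely when there is a $p$-cotoral inclusion $K\subseteq H$ with $f(K)\geq f(H)+1+rk_p(\pi_0(H/K))$, i.e.\ with $(\star)$ violated for the pair $(K,H)$. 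Here one uses that $rk_p(\pi_0(H/K))$ is always finite and that a violation of $(\star)$ forces $f(H)<\infty$.

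For the \emph{sufficiency} direction of the poset description I will break a $p$-cotoral inclusion into tractable steps. Since $A$ is abelian, $H/K\cong\T^{t}\times P$ with $P=\pi_0(H/K)$ a finite abelian $p$-group of $p$-rank $s$; letting $H_1\subseteq H$ be the preimage of the torus factor, the inclusion $K\subseteq H_1$ is cotoral and $H/H_1\cong P$ admits a filtration $H_1=L_0\subseteq L_1\subseteq\cdots\subseteq L_s=H$ with each $L_i/L_{i-1}$ a cyclic $p$-group. Then \Cref{thm:inclusions}(iii) (a cotoral inclusion induces an inclusion of primes with no height shift, via the computation of $\Phi^{\T}\underline{K(n)}$ in \Cref{thm:moravageom}) and \Cref{thm:inclusions}(iv) (a cyclic $p$-group quotient contributes a shift of one) chain together to give $P_A(K,m+s)\subseteq P_A(H_1,m+s)\subseteq P_A(L_1,m+s-1)\subseteq\cdots\subseteq P_A(H,m)$, and monotonicity of $P_A(K,-)$ in the second variable then covers all $n\geq m+s$.

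For the \emph{necessity} direction I will first invoke \Cref{cor:psubcotoral} to conclude that $K$ is $p$-subcotoral in $H$ and $n\geq m$; a short argument with the structure theory of abelian compact Lie groups (or a direct appeal to \Cref{thm:burnside}) identifies the $p$-subcotoral subgroups of an abelian $H$ with those $K$ for which $\pi_0(H/K)$ is a $p$-group. To pin down the shift, I will use the change-of-groups results of \Cref{prop:changeofgroups}: parts (i) and (ii) reduce to the case $K=\{e\}$ and $A=H\cong\T^{r}\times F$ with $F=\pi_0(A)$ a finite abelian $p$-group of rank $s$; part (iii) applied to the normal subgroup $\T^{r}$ of $A$, followed by part (ii) (quotienting by $\T^{r}$), reduces to showing that $P_F(e,n)\subseteq P_F(F,m)$ forces $n\geq m+s$; and applying (iii) and (ii) once more with the Frattini subgroup $\Phi(F)$ of $F$ reduces this to $P_E(e,n)\subseteq P_E(E,m)$ with $E=(\Z/p)^{s}$, where \Cref{thm:inclusions}(vi) yields $P_E(e,m+s-1)\not\subseteq P_E(E,m)$ and hence $n\geq m+s$, as needed. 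Throughout, one checks that these reductions preserve the quotient group $H/K$, hence the invariant $rk_p(\pi_0(H/K))$, and carry the height shift along unchanged.

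The substantive mathematical content is already encapsulated in \Cref{thm:inclusions}: the positive inclusions for cotoral extensions rest on the Borel-localization computation of \Cref{thm:moravageom}, and the sharpness in the elementary abelian case rests on the Arone--Lesh partition complexes imported from \cite{BHN^+17}. Taking those inputs as given, the remaining work is essentially bookkeeping, and I expect the deceptively simple-looking identification of "$p$-subcotoral" with "$\pi_0(H/K)$ a $p$-group" in the abelian setting to be the one point demanding the most care, since the operation $\mathcal{O}(-)$ behaves somewhat delicately under passage to subgroups.
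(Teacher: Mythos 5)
Your proposal is correct and follows essentially the same route as the paper: the same identification of $p$-subcotoral with ``$\pi_0(H/K)$ a $p$-group'' in the abelian case, the same positive inclusions obtained by chaining a cotoral step with a filtration by cyclic $p$-group quotients via \Cref{thm:inclusions} iii) and iv) (your $H_1$ is exactly the paper's $\omega_H(K)$), and the same sharpness argument from \Cref{thm:inclusions} vi) combined with \Cref{prop:changeofgroups}. The only deviations are organizational — you establish the full poset description before deducing admissibility, and your sharpness reduction passes through quotient groups and the Frattini subgroup to reach the elementary abelian case, which merely makes explicit a detail that the paper's terse citation of the chain $K\subseteq \omega_H(K)\subseteq H$ leaves implicit.
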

For later reference, we call Property ($\star$) the \emph{$p$-cotoral rank property}. 
\begin{proof} Recall that $f$ was defined to be admissible if for all subgroups $K$ and $H$ there is no inclusion $P_A(K,f(K))\subseteq P_A(H,f(H)+1)$ if $f(H)<\infty$.  By Part $i)$ of \Cref{thm:inclusions}, we know that such an inclusion can only happen if $K$ is a $p$-subcotoral subgroup of $H$. We claim that for abelian groups, $K$ being $p$-subcotoral in $H$ is equivalent to the quotient $H/K$ being $p$-toral: If there is a cotoral inclusion $\mathcal{O}(K) \subseteq \mathcal{O}(H)$, then $\mathcal{O}(K)\subseteq H$ is a $p$-toral extension, and hence so is $K\subseteq H$ since the quotient of a $p$-toral group is again $p$-toral. Conversely, if $H/K$ is $p$-toral, then so is $H/\mathcal{O}(K)$ as an extension of a $p$-toral group by a finite $p$-group. Thus, $H/\omega_H(\mathcal{O}(K))$ is a finite $p$-group and $\omega_H(\mathcal{O}(K))$ contains $\mathcal{O}(H)$ as a finite index subgroup. Furthermore, $K/(K\cap \mathcal{O}(H))$ is a finite $p$-group, since it is isomorphic to a subgroup of $H/\mathcal{O}(H)$. Thus, $K\cap \mathcal{O}(H)$ contains $\mathcal{O}(K)$. It follows that $\mathcal{O}(H)/\mathcal{O}(K)$ is a finite index subgroup of the torus $\omega_H(\mathcal{O}(K))/K$, hence $\mathcal{O}(H)=\omega_H(\mathcal{O}(K))$. This proves the claim.

Hence, it remains to show that for a $p$-cotoral inclusion $K\subseteq H$, we have 
\[ P_A(K,n+rk_p(\pi_0(H/K)))\subseteq P_A(H,n) \]
and 
\[ P_A(K,n+rk_p(\pi_0(H/K))-1)\not \subseteq P_A(H,n).\]
The first statement follows from the fact that there is a chain of inclusions
\[ 
K\subseteq \omega_H(K)=H_0 \subset H_1 \subset \hdots \subset H_{rk_p(\pi_0(H/K))}=H, 
\]
where each $H_i/H_{i-1}$ is $p$-power cyclic. Hence, by Parts $iii)$ and $iv)$ of \Cref{thm:inclusions}, we find that
\begin{align*}
P_A(K,n+rk_p(\pi_0(H/K))) &\subseteq P_A(\omega_H(K),n+rk_p(\pi_0(H/K))) \\
& \subseteq P_A(H_1,n+rk_p(\pi_0(H/K))-1) \\
& \subseteq \hdots \subseteq P_A(H,n),
\end{align*}
as desired. Finally, that $P_A(K,n+rk_p(\pi_0(H/K))-1)\not \subseteq P_A(H,n)$ follows from Part $vi)$ of \Cref{thm:inclusions} and Part $iii)$ of \Cref{prop:changeofgroups} applied to the subgroup chain $K \subseteq \omega_H(K) \subseteq H$.
\end{proof}

Hence, by \Cref{thm:criterion}, the tt-ideals of $\Spc(\mathcal{SH}^c_{A,(p)})$ correspond bijectively to upper semi-continuous functions $\Sub(A)\to \NN$ satisfying Property $(\star)$ above. In fact it turns out that Property $(\star)$ implies upper semi-continuity:
\begin{lemma} \label{lem:admissible} Let $A$ be an abelian compact Lie group and $f\colon\Sub(A)\to \NN$ a function such that $f(K)\leq f(H)$ for every cotoral inclusion $K\subseteq H$. Then $f$ is upper semi-continuous.
\end{lemma}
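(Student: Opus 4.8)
The plan is to verify directly that $f^{-1}([0,n))$ is open for every $n\in\NN$. Since $A$ is abelian the conjugation action on $\Sub(A)$ is trivial, so $\Sub(A)/A=\Sub(A)$ is a compact metric space, and it suffices to show: for every closed subgroup $H$ with $f(H)<n$ there is \emph{no} sequence $H_i\to H$ in $\Sub(A)$ with $f(H_i)\ge n$ for all $i$. So I would fix such an $H$ and such a hypothetical sequence and derive a contradiction. First, by \Cref{prop:mz} (the Montgomery--Zippin theorem) each $H_i$ is, for almost all $i$, subconjugate to $H$, hence — $A$ being abelian — a closed subgroup of $H$; discarding finitely many terms I may assume $H_i\subseteq H$ for all $i$.

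The heart of the argument is to interpolate the inclusion $H_i\subseteq H$ through $H^\circ H_i$, where $H^\circ$ denotes the identity component of $H$:
\[
H_i\ \subseteq\ H^\circ H_i\ \subseteq\ H .
\]
The first inclusion is cotoral: $H^\circ H_i/H_i\cong H^\circ/(H^\circ\cap H_i)$ is the quotient of the torus $H^\circ$ by a closed subgroup, and such a quotient is again a torus (split off the subtorus given by the identity component, then divide by a finite subgroup of a torus). For the second inclusion I would observe that $H^\circ H_i$ is one of the finitely many subgroups of $H$ containing $H^\circ$ — these correspond bijectively to subgroups of the finite group $\pi_0(H)$ — so by the pigeonhole principle, after passing to a subsequence, $H^\circ H_i=L$ for a single such subgroup $L$. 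Then $H_i\subseteq L$ for all $i$ in this subsequence, and since $L$ is closed and $H_i\to H$ we obtain $H\subseteq L\subseteq H$, i.e.\ $L=H$. Hence $H^\circ H_i=H$ for almost all $i$.

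Putting these together, for almost all $i$ the inclusion $H_i\subseteq H$ is itself cotoral, so the hypothesis on $f$ yields $f(H_i)\le f(H)<n$, contradicting $f(H_i)\ge n$. This proves that $f^{-1}([0,n))$ is open for all $n\in\NN$, which is exactly upper semi-continuity of $f$. The only step requiring a little care is the claim that $H^\circ H_i=H$ eventually, whose content is the elementary topological fact that a sequence of subgroups all contained in a fixed \emph{proper} closed subgroup of $H$ cannot converge to $H$ in the Hausdorff metric; the verification that $H_i\subseteq H^\circ H_i$ is cotoral is a routine computation with quotients of tori, and no deeper input (in particular none of the blue-shift machinery of \cite{BHN^+17}) is needed here — the point is simply that in the abelian setting the cotoral relation is flexible enough to connect $H_i$ to $H$.
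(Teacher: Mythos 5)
Your proof is correct and takes essentially the same route as the paper: after invoking Montgomery--Zippin to reduce to subgroups of $H$, you show that subgroups of $H$ sufficiently close to $H$ satisfy $H^\circ H_i=H$, i.e.\ surject onto $\pi_0(H)$, and are hence cotoral in $H$, so the hypothesis on $f$ applies. The paper makes the same point by observing that the cotoral subgroups of $H$ are exactly those surjecting onto $\pi_0(H)$ and that the induced map $\Sub(H)\to\Sub(\pi_0(H))$ is continuous with finite discrete target; your sequence-and-pigeonhole formulation is just this openness statement in metric-space language.
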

\begin{proof}
Let $n\in \NN$ and $H$ a closed subgroup of $A$ such that $f(H)< n$. We have to show that some open neighbourhood of $H$ is also contained in $f^{-1}([0,n))$. We claim that the set of subgroups $K$ of $H$ for which $K\subseteq H$ is a cotoral inclusion forms an open neighbourhood of $H$ in $\Sub(A)$. Note that for any such $K$ we have $f(K)\leq f(H)< n$ by assumption, hence $K\in f^{-1}([0,n))$. Thus we are done once we have shown the claim. Now, by the Montgomery--Zippin theorem (see \Cref{prop:mz}) the set of all subgroups of $H$ is open in $\Sub(A)$, hence it suffices to show that the set of cotoral subgroups $K\subseteq H$ is open in $\Sub(H)$. But this follows from the fact that the projection $H\to \pi_0(H)$ induces a continuous map $\Sub(H)\to \Sub(\pi_0(H))$ and the cotoral subgroups are exactly the ones that surject onto $\pi_0(H)$.
\end{proof}

\begin{cor}\label{cor:abelian} 
Let $A$ be an abelian compact Lie group. Then we have: 
\begin{enumerate}[i)]
\item The tt-ideals of $\Spc(\mathcal{SH}^c_{A,(p)})$ correspond bijectively to functions $f\colon\Sub(A)\to \NN$ satisfying the $p$-cotoral rank property  $(\star)$ above.
\item Given a function $f\colon\Sub(A)\to \NN$, there exists a finite $G$-spectrum $X$ with $\type{X}=f$ if and only if $f$ satisfies the $p$-cotoral rank property  $(\star)$ above and is locally constant.
\end{enumerate}
\end{cor}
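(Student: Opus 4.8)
The plan is to obtain both parts essentially for free by combining three results already established in the excerpt: \Cref{thm:criterion}, \Cref{thm:abelian}, and \Cref{lem:admissible}. Throughout I use that conjugation is trivial for abelian $A$, so $\Sub(A)/A = \Sub(A)$ and there is no distinction between functions on $\Sub(A)$ and functions on $\Sub(A)/A$.

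For part (i), I would begin from \Cref{thm:criterion}(1), which says that $I \mapsto \type{I}$ is a bijection from the tt-ideals of $\mathcal{SH}^c_{A,(p)}$ onto the set of functions $\Sub(A) \to \NN$ that are admissible \emph{and} upper semi-continuous. \Cref{thm:abelian} rewrites ``admissible'' as ``satisfies the $p$-cotoral rank property $(\star)$'', so it only remains to argue that $(\star)$ already forces upper semi-continuity, making the second hypothesis redundant. This is the content of \Cref{lem:admissible}, applied after the elementary observation that the instance of $(\star)$ in which $K \subseteq H$ is cotoral --- so $\pi_0(H/K)$ is trivial and $rk_p(\pi_0(H/K)) = 0$ --- is precisely the inequality $f(H) \ge f(K)$ that \Cref{lem:admissible} takes as its hypothesis. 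Stacking these identifications, the functions occurring as $\type{I}$ are exactly those satisfying $(\star)$, which is the assertion of part (i).

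Part (ii) then follows at once from \Cref{thm:criterion}(2): a function is of the form $\type{X}$ for a finite $A$-spectrum $X$ if and only if it is admissible and locally constant, and admissibility is again $(\star)$ by \Cref{thm:abelian}. Here no further collapse occurs, since local constancy is strictly stronger than upper semi-continuity --- compare \Cref{ex:notfinitegen} for $\T$ --- so the stated form is already sharp. I do not anticipate a genuine obstacle: every ingredient is a prior result, and the only point needing a moment's care is the bookkeeping that identifies the hypothesis of \Cref{lem:admissible} with the ``$\pi_0(H/K)$ trivial'' special case of $(\star)$, together with the trivial remark that $\Sub(A) = \Sub(A)/A$.
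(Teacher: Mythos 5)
Your argument is correct and coincides with the paper's own proof: the paper likewise deduces the corollary directly from \Cref{thm:criterion} and \Cref{thm:abelian}, observing that a function satisfying the $p$-cotoral rank property $(\star)$ in particular satisfies the hypothesis of \Cref{lem:admissible} (via the cotoral case, where $rk_p(\pi_0(H/K))=0$) and is therefore automatically upper semi-continuous. Nothing further is needed.
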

\begin{proof} Every function $f:\Sub(A)\to \NN$ satisfying the $p$-cotoral rank property in particular satisfies the conditions for \Cref{lem:admissible}. Hence, the corollary is a direct consequence of \Cref{thm:criterion} and \Cref{thm:abelian} and this lemma.
\end{proof}

\begin{example}[$A=\mathbb{T}$] The subgroups of $\mathbb{T}$ are given by the finite cyclic subgroups $C_n$ for $n\geq 1$, and the full group $\mathbb{T}$. Therefore, we can identify $\Sub(\mathbb{T})$ with $\{1,2,\hdots\}\cup \{\infty\}$. The tt-ideals of $\Spc(\mathcal{SH}^c_{\mathbb{T},(p)})$ hence correspond to functions 
\[ 
f\colon\{1,2,\hdots\}\cup \{\infty\}\to \NN 
\]
such that $f(\infty) \geq f(n)$ for all $n$, and $f(p^k\cdot n)+1\geq f(n)$ for all $n$ and $k$. The tt-ideal corresponding to such a function is finitely generated if and only if $f(n)=f(\infty)$ for almost all~$n$. See \Cref{fig:spccircle} for an illustration.
\end{example}
\begin{figure}[h]
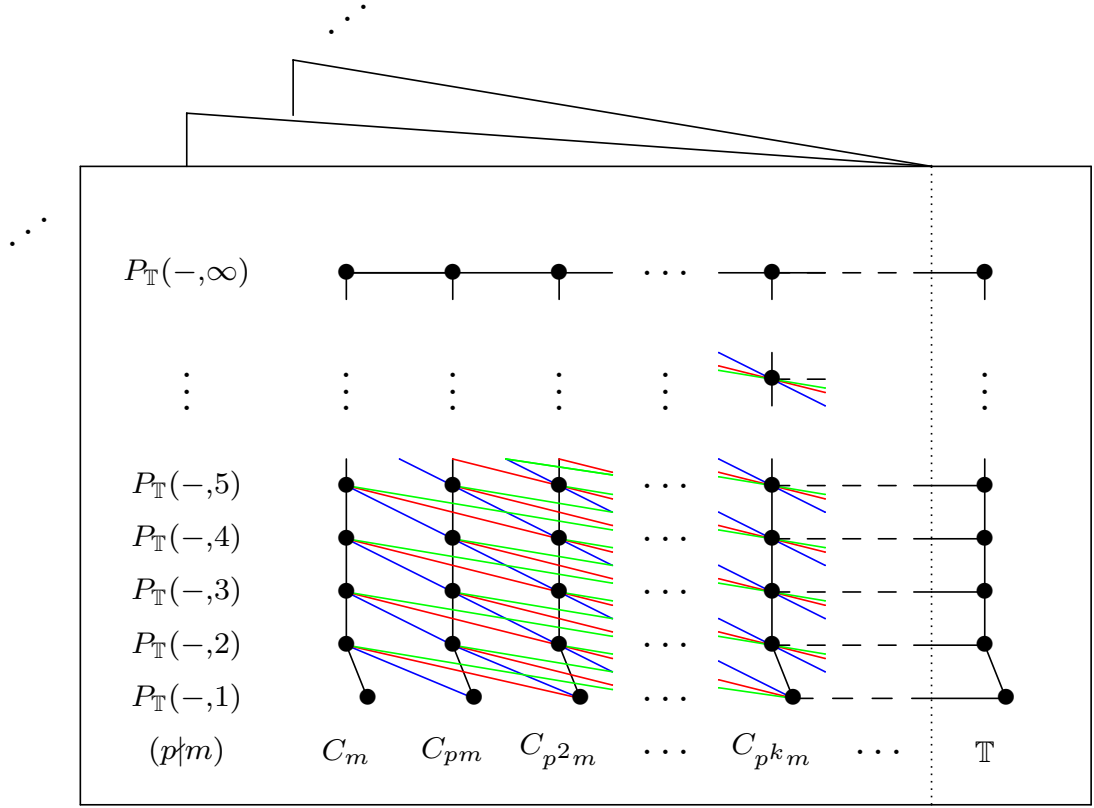

\[
\vcenter{\scalebox{1.4}{\xy
%
%
(-25,55)*{\iddots};
(5,75)*{\iddots};
%
{\ar@{-} (-20,0)*{};(-20,60)*{}};
{\ar@{-} (-20,60)*{};(60,60)*{}};
{\ar@{..} (60,60)*{};(60,0)*{}};
{\ar@{-} (60,0)*{};(-20,0)*{}};
{\ar@{-} (60,60)*{};(75,60)*{}};
{\ar@{-} (75,60)*{};(75,0)*{}};
{\ar@{-} (75,0)*{};(60,0)*{}};
{\ar@{-} (-10,65)*{};(60,60)*{}};
{\ar@{-} (-10,65)*{};(-10,60)*{}};
{\ar@{-} (0,70)*{};(60,60)*{}};
{\ar@{-} (0,70)*{};(0,64.8)*{}};
%
%
(5,5)*{\scriptstyle C_{m}};
{\ar@{-} (7,10)*{};(5,15)*{}};
{\ar@{-} (5,15)*{};(5,20)*{}};
{\ar@{-} (5,20)*{};(5,25)*{}};
{\ar@{-} (5,25)*{};(5,30)*{}};
{\ar@{-} (5,30)*{};(5,32.5)*{}};
{\ar@{-} (5,47.5)*{};(5,50)*{}};
%
%
{\ar@{-} (17,10)*{};(15,15)*{}};
{\ar@{-} (15,15)*{};(15,20)*{}};
{\ar@{-} (15,20)*{};(15,25)*{}};
{\ar@{-} (15,25)*{};(15,30)*{}};
{\ar@{-} (15,30)*{};(15,32.5)*{}};
{\ar@{-} (15,47.5)*{};(15,50)*{}};
%
%
{\ar@{-} (27,10)*{};(25,15)*{}};
{\ar@{-} (25,15)*{};(25,20)*{}};
{\ar@{-} (25,20)*{};(25,25)*{}};
{\ar@{-} (25,25)*{};(25,30)*{}};
{\ar@{-} (25,30)*{};(25,32.5)*{}};
{\ar@{-} (25,47.5)*{};(25,50)*{}};
%
%
{\ar@{-} (47,10)*{};(45,15)*{}};
{\ar@{-} (45,15)*{};(45,20)*{}};
{\ar@{-} (45,20)*{};(45,25)*{}};
{\ar@{-} (45,25)*{};(45,30)*{}};
{\ar@{-} (45,30)*{};(45,32.5)*{}};
{\ar@{-} (45,40)*{};(45,37.5)*{}};
{\ar@{-} (45,40)*{};(45,42.5)*{}};
{\ar@{-} (45,47.5)*{};(45,50)*{}};
%
%
{\ar@{-} (67,10)*{};(65,15)*{}};
{\ar@{-} (65,15)*{};(65,20)*{}};
{\ar@{-} (65,20)*{};(65,25)*{}};
{\ar@{-} (65,25)*{};(65,30)*{}};
{\ar@{-} (65,30)*{};(65,32.5)*{}};
{\ar@{-} (65,47.5)*{};(65,50)*{}};
%
%
{\ar@{-}@[blue] (17,10)*{};(5,15)*{}};
{\ar@{-}@[blue] (15,15)*{};(5,20)*{}};
{\ar@{-}@[blue] (15,20)*{};(5,25)*{}};
{\ar@{-}@[blue] (15,25)*{};(5,30)*{}};
{\ar@{-}@[blue] (15,30)*{};(10,32.5)*{}};
{\ar@{-} (15,50)*{};(5,50)*{}};
%
%
{\ar@{-}@[blue] (27,10)*{};(15,15)*{}};
{\ar@{-}@[blue] (25,15)*{};(15,20)*{}};
{\ar@{-}@[blue] (25,20)*{};(15,25)*{}};
{\ar@{-}@[blue] (25,25)*{};(15,30)*{}};
{\ar@{-}@[blue] (25,30)*{};(20,32.5)*{}};
{\ar@{-} (25,50)*{};(5,50)*{}};
%
%
{\ar@{-}@[blue] (30,12.5)*{};(25,15)*{}};
{\ar@{-}@[blue] (30,17.5)*{};(25,20)*{}};
{\ar@{-}@[blue] (30,22.5)*{};(25,25)*{}};
{\ar@{-}@[blue] (30,27.5)*{};(25,30)*{}};
{\ar@{-} (30,50)*{};(25,50)*{}};
%
%
{\ar@{-}@[blue] (47,10)*{};(40,13.5)*{}};
{\ar@{-}@[blue] (45,15)*{};(40,17.5)*{}};
{\ar@{-}@[blue] (45,20)*{};(40,22.5)*{}};
{\ar@{-}@[blue] (45,25)*{};(40,27.5)*{}};
{\ar@{-}@[blue] (45,30)*{};(40,32.5)*{}};
{\ar@{-}@[blue] (45,40)*{};(40,42.5)*{}};
{\ar@{-} (45,50)*{};(40,50)*{}};
%
%
{\ar@{-}@[blue] (50,12.5)*{};(45,15)*{}};
{\ar@{-}@[blue] (50,17.5)*{};(45,20)*{}};
{\ar@{-}@[blue] (50,22.5)*{};(45,25)*{}};
{\ar@{-}@[blue] (50,27.5)*{};(45,30)*{}};
{\ar@{-}@[blue] (50,37.5)*{};(45,40)*{}};
{\ar@{--}@[purple] (50,40)*{};(45,40)*{}};
{\ar@{-} (50,50)*{};(45,50)*{}};
%
%
{\ar@{-}@[red] (27,10)*{};(5,15)*{}};
{\ar@{-}@[red] (25,15)*{};(5,20)*{}};
{\ar@{-}@[red] (25,20)*{};(5,25)*{}};
{\ar@{-}@[red] (25,25)*{};(5,30)*{}};
{\ar@{-}@[red] (25,30)*{};(15,32.5)*{}};
%
%
{\ar@{-}@[red] (30,11.25)*{};(15,15)*{}};
{\ar@{-}@[red] (30,16.25)*{};(15,20)*{}};
{\ar@{-}@[red] (30,21.25)*{};(15,25)*{}};
{\ar@{-}@[red] (30,26.25)*{};(15,30)*{}};
{\ar@{-}@[red] (30,31.25)*{};(25,32.5)*{}};
%
%
{\ar@{-}@[red] (30,13.75)*{};(25,15)*{}};
{\ar@{-}@[red] (30,18.75)*{};(25,20)*{}};
{\ar@{-}@[red] (30,23.75)*{};(25,25)*{}};
{\ar@{-}@[red] (30,28.75)*{};(25,30)*{}};
%
%
{\ar@{-}@[red] (47,10)*{};(40,11.75)*{}};
{\ar@{-}@[red](45,15)*{};(40,16.25)*{}};
{\ar@{-}@[red] (45,20)*{};(40,21.25)*{}};
{\ar@{-}@[red] (45,25)*{};(40,26.25)*{}};
{\ar@{-}@[red] (45,30)*{};(40,31.25)*{}};
{\ar@{-}@[red] (45,40)*{};(40,41.25)*{}};
%
%
{\ar@{-}@[red] (50,13.75)*{};(45,15)*{}};
{\ar@{-}@[red] (50,18.75)*{};(45,20)*{}};
{\ar@{-}@[red] (50,23.75)*{};(45,25)*{}};
{\ar@{-}@[red] (50,28.75)*{};(45,30)*{}};
{\ar@{-}@[red] (50,38.75)*{};(45,40)*{}};
%
%
{\ar@{-}@[green] (30,10.83)*{};(5,15)*{}};
{\ar@{-}@[green] (30,15.83)*{};(5,20)*{}};
{\ar@{-}@[green] (30,20.83)*{};(5,25)*{}};
{\ar@{-}@[green] (30,25.83)*{};(5,30)*{}};
{\ar@{-}@[green] (30,31)*{};(20,32.5)*{}};
%
%
{\ar@{-}@[green] (30,12.5)*{};(15,15)*{}};
{\ar@{-}@[green] (30,17.5)*{};(15,20)*{}};
{\ar@{-}@[green] (30,22.5)*{};(15,25)*{}};
{\ar@{-}@[green] (30,27.5)*{};(15,30)*{}};
{\ar@{-}@[green] (30,31)*{};(20,32.5)*{}};
%
%
{\ar@{-}@[green] (30,14.17)*{};(25,15)*{}};
{\ar@{-}@[green] (30,19.17)*{};(25,20)*{}};
{\ar@{-}@[green] (30,24.17)*{};(25,25)*{}};
{\ar@{-}@[green] (30,29.17)*{};(25,30)*{}};
%
%
{\ar@{-}@[green] (47,10)*{};(40,11)*{}};
{\ar@{-}@[green] (45,15)*{};(40,15.83)*{}};
{\ar@{-}@[green] (45,20)*{};(40,20.83)*{}};
{\ar@{-}@[green] (45,25)*{};(40,25.83)*{}};
{\ar@{-}@[green] (45,30)*{};(40,30.83)*{}};
{\ar@{-}@[green] (45,40)*{};(40,40.83)*{}};
%
%
{\ar@{-}@[green] (50,14.17)*{};(45,15)*{}};
{\ar@{-}@[green] (50,19.17)*{};(45,20)*{}};
{\ar@{-}@[green] (50,24.17)*{};(45,25)*{}};
{\ar@{-}@[green] (50,29.17)*{};(45,30)*{}};
{\ar@{-}@[green] (50,39.17)*{};(45,40)*{}};
%
{\ar@{-}@[gray] (67,10)*{};(60,10)*{}};
{\ar@{-}@[gray] (65,15)*{};(60,15)*{}};
{\ar@{-}@[gray] (65,20)*{};(60,20)*{}};
{\ar@{-}@[gray] (65,25)*{};(60,25)*{}};
{\ar@{-}@[gray] (65,30)*{};(60,30)*{}};
{\ar@{-}@[gray] (65,50)*{};(60,50)*{}};
{\ar@{--}@[gray] (60,10)*{};(47,10)*{}};
{\ar@{--}@[gray] (60,15)*{};(45,15)*{}};
{\ar@{--}@[gray] (60,20)*{};(45,20)*{}};
{\ar@{--}@[gray] (60,25)*{};(45,25)*{}};
{\ar@{--}@[gray] (60,30)*{};(45,30)*{}};
{\ar@{--}@[gray] (60,50)*{};(45,50)*{}};
%
%
(7,10)*{\bullet};
(5,15)*{\bullet};
(5,20)*{\bullet};
(5,25)*{\bullet};
(5,30)*{\bullet};
(5,40)*{\vdots};
(5,50)*{\bullet};
%
%
(15,5)*{\scriptstyle C_{pm}};
(17,10)*{\bullet};
(15,15)*{\bullet};
(15,20)*{\bullet};
(15,25)*{\bullet};
(15,30)*{\bullet};
(15,40)*{\vdots};
(15,50)*{\bullet};
%
%
(25,5)*{\scriptstyle C_{p^2m}};
(27,10)*{\bullet};
(25,15)*{\bullet};
(25,20)*{\bullet};
(25,25)*{\bullet};
(25,30)*{\bullet};
(25,40)*{\vdots};
(25,50)*{\bullet};
%
%
(35,10)*{\hdots};
(35,15)*{\hdots};
(35,20)*{\hdots};
(35,25)*{\hdots};
(35,30)*{\hdots};
(35,40)*{\vdots};
(35,50)*{\hdots};
%
%
(35,5)*{\hdots};
%
%
(45,5)*{\scriptstyle C_{p^km}};
(47,10)*{\bullet};
(45,15)*{\bullet};
(45,20)*{\bullet};
(45,25)*{\bullet};
(45,30)*{\bullet};
(45,40)*{\bullet};
(45,50)*{\bullet};
%
%
(55,5)*{\hdots};
%
%
(65,5)*{\scriptstyle \T};
(67,10)*{\bullet};
(65,15)*{\bullet};
(65,20)*{\bullet};
(65,25)*{\bullet};
(65,30)*{\bullet};
(65,40)*{\vdots};
(65,50)*{\bullet};
%
%
(-10,10)*{\scriptstyle P_{\T}(-,1)};
(-10,15)*{\scriptstyle P_{\T}(-,2)};
(-10,20)*{\scriptstyle P_{\T}(-,3)};
(-10,25)*{\scriptstyle P_{\T}(-,4)};
(-10,30)*{\scriptstyle P_{\T}(-,5)};
(-10,40)*{\vdots};
(-10,50)*{\scriptstyle P_{\T}(-,\infty)};
%
(-10,5)*{\scriptstyle (p\nmid m)};
\endxy}}
\]
\caption{$\Spc(\SH_{\T,(p)}^c)$: This figure depicts the poset structure of $\Spc(\SH_{\T,(p)}^c)$, inclusions going from left to right and top to bottom. Each natural number $m$ that is not divisible by $p$ gives rise to a page as above, the different pages overlapping in the primes $P_{\T}(\T,n)$ of the full group $\T$. The inclusions $P_\T(C_{p^km},n+1)\subseteq P_\T(C_{p^{k+l}m},n)$ with $l>3$ are omitted in the picture, while the dashed lines indicate the inclusions $P_\T(C_{p^km},n)\subseteq P_{\T}(\T,n)$ for all $k\in \N$.}
\label{fig:spccircle}
\end{figure}

\begin{remark}\label{rem:o2} One might wonder whether upper semi-continuity is always implied by admissibility, but this is not the case in general. We consider the orthogonal group $O(2)$ for any prime~$p$ and the function 
\[ 
f\colon\Sub(O(2))/O(2) \to \NN 
\]
defined by
\[ 
f(C_n)=f(\T)=f(D_n)=1 \text{ and } f(O(2))=0. 
\]
Then $f$ is admissible, since no subgroup $K$ of $O(2)$ sits cotorally inside $O(2)$ and therefore $P_{O(2)}(K,1)\not\subseteq P_{O(2)}(O(2),1)$ by Part $ii)$ of \Cref{thm:inclusions}. But $f$ is not upper semi-continuous as the preimage of $[0,1)$ is not open.
\end{remark}

\section{Away from $|G|$} \label{sec:order}
Finally we explain that for a fixed compact Lie group $G$ the results in this paper determine the Balmer spectrum of $\Spc(\SH_{G,(p)}^c)$ for all but finitely many $p$. 

It is a result of tom Dieck \cite[Thm. 1]{tD77} that the set of orders of finite Weyl groups 
\[ 
\{|W_GH| \mid  H\subseteq G \text{ with $W_GH$ finite} \} 
\]
is bounded. We denote the least common multiple of this set by $|G|$ and call it the \emph{order of $G$}. Note that for finite $G$ this coincides with the usual definition of $|G|$. Then we have:
\begin{theorem} \label{thm:order} Let $G$ be a compact Lie group and $p$ be a prime not dividing the order $|G|$. Then a function $f\colon\Sub(G)/G\to \NN$ is admissible if and only if for all normal inclusions $K\subseteq H$ with $H/K$ an abelian $p$-toral group the inequality
\[ f(H)+rk_p(\pi_0(H/K)) \geq f(K) \]
holds.
\end{theorem}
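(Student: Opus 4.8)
The plan is to show that, when $p \nmid |G|$, the inclusion relations among the primes $P_G(H,n)$ are governed exactly by the normal inclusions with abelian $p$-toral quotient, so that the admissibility criterion takes the same shape as in the abelian case of \Cref{thm:abelian}.

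First I would rephrase admissibility. By Part i) of \Cref{prop:changeofgroups} together with \Cref{cor:psubcotoral}, an inclusion $P_G(K,f(K)) \subseteq P_G(H,f(H)+1)$ occurs if and only if some $G$-conjugate $K_0 = K^g$ is a $p$-subcotoral subgroup of $H$ with $P_H(K_0,f(K)) \subseteq P_H(H,f(H)+1)$; so $f$ is admissible precisely when no such configuration exists. The "only if" half of the theorem is unconditional and immediate from \Cref{thm:inclusions}: for a normal $K \trianglelefteq H$ with $H/K$ abelian $p$-toral, filtering $H/K = \T^a \times F$ by a single torus quotient followed by $rk_p(F)$ cyclic $p$-group quotients and applying Parts iii) and iv) of \Cref{thm:inclusions} gives $P_G(K, m + rk_p(\pi_0(H/K))) \subseteq P_G(H,m)$ for all $m$; taking $m = f(H)+1$ and using admissibility forces $f(K) \le f(H) + rk_p(\pi_0(H/K))$ when $f(H) < \infty$ (and the inequality is vacuous when $f(H) = \infty$).

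The content is the converse, and this is where $p \nmid |G|$ enters, through the structural claim that every $p$-subcotoral inclusion $K_0 \subseteq H$ of closed subgroups of $G$ is, up to $H$-conjugacy, a normal inclusion $K' \trianglelefteq H$ with $H/K'$ abelian $p$-toral. The heart of this is the assertion that $H/\mathcal{O}(H)$ is abelian for every closed subgroup $H \le G$. To see this one compares the finite $p$-group $H/\mathcal{O}(H) = W_H(\mathcal{O}(H))$ with $W_G(\mathcal{O}(H))$: if $W_G(\mathcal{O}(H))$ is finite then $|H/\mathcal{O}(H)|$ divides $|W_G(\mathcal{O}(H))|$, hence divides $|G|$ by tom Dieck's bound \cite[Thm.~1]{tD77}, forcing $H/\mathcal{O}(H)$ to be trivial; if $W_G(\mathcal{O}(H))$ is infinite one enlarges $\mathcal{O}(H)$ to a cotoral extension $L \supseteq \mathcal{O}(H)$ with finite Weyl group in $G$ (using that $\mathcal O(H)$ has no nontrivial finite $p$-group quotient and that a torus is connected), argues that $H$ is contained in such an $L$, and concludes that $H/\mathcal{O}(H)$ embeds into the torus $L/\mathcal{O}(H)$ and so is abelian. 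Granting the abelianness of $H/\mathcal{O}(H)$ and $K_0/\mathcal{O}(K_0)$, one organises the chain $K_0 \subseteq H$ using $\mathcal{O}(H) = \omega_H(\mathcal{O}(K_0))$ into a normal $H$-conjugate $K' \trianglelefteq H$ with $H/K'$ abelian $p$-toral. I expect making this last reduction fully precise — in particular handling the non-uniqueness of $\omega_G$ and pinning down $rk_p(\pi_0(H/K'))$ — to be the main obstacle.

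With the structural claim in hand the converse follows as in the proof of \Cref{thm:abelian}. Suppose the displayed inequality holds for all normal $K \trianglelefteq H$ with $H/K$ abelian $p$-toral, and suppose for contradiction that $P_G(K,f(K)) \subseteq P_G(H,f(H)+1)$ for some $K,H$ with $f(H) < \infty$. Reducing via Part i) of \Cref{prop:changeofgroups} and replacing $K_0$ by a normal $H$-conjugate $K' \trianglelefteq H$ with $H/K'$ abelian $p$-toral, we get $P_H(K',f(K)) \subseteq P_H(H,f(H)+1)$. Writing $r = rk_p(\pi_0(H/K'))$, let $\tilde K \trianglelefteq H$ be the preimage of the Frattini subgroup of the finite abelian $p$-group $H/\omega_H(K')$, so that $H/\tilde K \cong (\mathbb{Z}/p)^r$; applying Part iii) of \Cref{prop:changeofgroups} to $K' \subseteq \tilde K \subseteq H$ and then Part vi) of \Cref{thm:inclusions} with ambient group $H$ shows that the inclusion $P_H(K',f(K)) \subseteq P_H(H,f(H)+1)$ forces $f(K) \ge f(H) + r + 1$. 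On the other hand the hypothesis applied to the pair $(K',H)$ gives $f(K) = f(K') \le f(H) + r$, a contradiction. Hence $f$ is admissible, and the theorem follows.
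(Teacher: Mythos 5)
Your overall architecture is the right one and large parts of it are sound: the reduction of admissibility to the poset question via Part i) of \Cref{prop:changeofgroups} and \Cref{cor:psubcotoral}, the ``admissible $\Rightarrow$ inequality'' direction via the chain argument with Parts iii) and iv) of \Cref{thm:inclusions}, and the closing contradiction via Part iii) of \Cref{prop:changeofgroups} together with Part vi) of \Cref{thm:inclusions} (a harmless variant of the paper's route, which instead passes to $H/K^g$ by Part ii) of \Cref{prop:changeofgroups} and quotes \Cref{thm:abelian}). The problem is that the structural claim --- for $p\nmid|G|$, every $p$-subcotoral inclusion $K_0\subseteq H$ is normal with abelian $p$-toral quotient --- is the entire content of the theorem, and your treatment of it has a genuine gap, which you yourself flag as ``the main obstacle'' without resolving it.

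Concretely, two things are missing. First, in the case $W_G(\mathcal{O}(H))$ infinite you assert, without argument, that $H$ is contained in a cotoral extension $L\supseteq\mathcal{O}(H)$ with finite Weyl group; this containment is exactly where the coprimality hypothesis has to do real work. The paper gets it from tom Dieck's theorem \cite[Thm.~4]{tD75} (cf. \Cref{thm:burnside}): among subgroups sharing a given $p$-perfection there is a unique maximal conjugacy class $M$, characterized by having finite Weyl group of order prime to $p$; since $p\nmid|G|$ one deduces $M=\mathcal{O}(M)=\omega_G(\mathcal{O}(K_0))$ and hence the chain $\mathcal{O}(K_0)\subseteq K_0\subseteq H\subseteq\omega_G(\mathcal{O}(K_0))$. (One can alternatively argue that the finite $p$-group $H/\mathcal{O}(H)$ is subconjugate in $W=W_G(\mathcal{O}(H))$ to a maximal $p$-toral subgroup, which is the maximal torus $T\subseteq W^{\circ}$ itself because $N_W(T)/T$ embeds into the finite Weyl group $W_G(L)$ whose order divides $|G|$ and is thus prime to $p$ --- but some such argument must actually be supplied.) Second, even granting that $H/\mathcal{O}(H)$ and $K_0/\mathcal{O}(K_0)$ are abelian, the conclusion that $K_0$ is normal in $H$ with abelian quotient does not follow: it is not clear that $H$ normalizes $\mathcal{O}(K_0)$, and even if it did, $H/\mathcal{O}(K_0)$ would a priori only be an extension of a finite abelian $p$-group by a torus, and abelian-by-abelian extensions need not be abelian. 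What is really needed is that $H$ lies in a cotoral extension of the \emph{smaller} group $\mathcal{O}(K_0)$, so that $H/\mathcal{O}(K_0)$ is a subgroup of a torus and every intermediate subgroup is automatically normal with abelian quotient; that is precisely what the paper's chain delivers, and what your ``organise the chain'' step would have to prove.
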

In other words, if $p$ does not divide $|G|$, there is an inclusion
\[ 
P_G(K,n+i)\subseteq P_G(H,n) 
\]
if and only if $K$ is $G$-conjugate to a normal subgroup $K^g$ of $H$ with $H/K^g$ an abelian $p$-toral group such that $i\geq rk_p(\pi_0(H/K^g))$. In particular, this determines the Balmer spectrum of finite $G$-spectra away from the order $|G|$. Note that for finite groups $G$ the statement of \Cref{thm:order} follows directly from \Cref{thm:inclusions}, since for $p$ not dividing $|G|$ there exist no proper $p$-subcotoral inclusions $K\subseteq H$. For general compact Lie groups we still need to take care of the maximal torus and its $p$-toral subgroups and hence rely on understanding the inclusions for abelian groups as worked out in the previous section.

\begin{proof}[Proof of \Cref{thm:order}] We first show that when $p$ does not divide the order $|G|$ and $K\subseteq H$ is a $p$-subcotoral inclusion of closed subgroups of $G$, then $K$ is normal in $H$ and the quotient $H/K$ is an abelian $p$-toral group. To see this we first note that when $K\subseteq H$ is $p$-subcotoral, then $K$ and $H$ in particular have the same $p$-perfection inside $G$, i.e., $\omega_G(\mathcal{O}(K))= \omega_G(\mathcal{O}(H))$. We recall from \cite[Thm. 4]{tD75} that the set of conjugacy classes of subgroups of $G$ sharing this $p$-perfection with $H$ and $K$ has a unique maximal element $M$, which is characterized by the property that its Weyl group is finite and of order coprime to $p$. Without loss of generality, we can choose an $M$ which contains $H$. Since $\mathcal{O}(M)$ has the same $p$-perfection as $M$ and is contained inside $M$ with finite $p$-power index, it follows that $\mathcal{O}(M)$ must have finite Weyl group (since otherwise $\omega_G(\mathcal{O}(M))$ could not be contained in a subgroup conjugate to $M$). Now, by assumption $|W_G\mathcal{O}(M)|$ is coprime to $p$, and thus $M$ must equal $\mathcal{O}(M)$. But this implies that $H$ is contained in $\mathcal{O}(M)= \omega_G(\mathcal{O}(K))$. Thus, there is a chain of inclusions
\[ 
\mathcal{O}(K)\subseteq K \subseteq H \subseteq \omega_G(\mathcal{O}(K)). 
\]
Since $\mathcal{O}(K)\subseteq \omega_G(\mathcal{O}(K))$ is a cotoral inclusion, we find that $K$ must be normal in $H$ with abelian quotient. Since $K$ is further $p$-subcotoral in $H$, the quotient must be a $p$-toral compact Lie group as desired.

Now we let $K$ and $H$ be arbitrary closed subgroups of $G$ and show the statement of the theorem, i.e., that there is an inclusion $P_G(K,n+i)\subseteq P_G(H,n)$ if and only if $K$ is $G$-conjugate to a normal subgroup $K^g$ of $H$ with $H/K^g$ an abelian $p$-toral group such that $i\geq rk_p(\pi_0(H/K^g))$. By \Cref{prop:changeofgroups} we know that there is an inclusion $P_G(K,n+i)\subseteq P_G(H,n)$ if and only if there is an inclusion $P_H(K^g,n+i)\subseteq P_H(H,n)$ for some conjugate $K^g$ contained in $H$. \Cref{thm:inclusions} shows that the latter inclusion can only happen when $K^g\subseteq H$ is $p$-subcotoral, which as we just saw implies that $K^g\subseteq H$ is normal with abelian $p$-toral quotient. Applying \Cref{prop:changeofgroups} once more gives that in this case there is an inclusion $P_H(K^g,n+i)\subseteq P_H(H,n)$ if and only if there is an inclusion $P_{H/K^g}(K^g/K^g,n+i)\subseteq P_{H/K^g}(H/K^g,n)$. Since the group $H/K^g$ is abelian $p$-toral, \Cref{thm:abelian} implies that the latter inclusion happens if and only if $i\geq rk_p(\pi_0(H/K^g))$, which finishes the proof.
\end{proof}

\appendix
\section{}
In this appendix we give the proof of \Cref{lem:induced}, the statement of which we recall for convenience:

\begin{lemma} Let $H$ be a closed subgroup of a compact Lie group $G$, and let $X$ be a finite $H$-spectrum. Then the following hold:
\begin{enumerate}[i)]
	\item The type of $\Phi^H(\Ind_H^G(X))$ is equal to the type of $\Phi^H(X)$.
	\item If $K$ is another closed subgroup of $G$, then the type of $\Phi^K(\Ind_H^G(X))$ is at least the minimum of the types of $\Phi^{K'}(X)$ for all subgroups $K'$ of $H$ that are conjugate to $K$ in $G$.
	\item If $K$ is not subconjugate to $H$, then the geometric fixed points $\Phi^K(\Ind_H^G(X))$ are trivial.
\end{enumerate}
\end{lemma}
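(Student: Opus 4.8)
The plan is to reduce to the case where $X$ is a suspension spectrum and then to analyse the fixed points of the $G$-space $G_+\wedge_H A$ fibrewise over the orbit space $(G/H)^K$. First I would use \Cref{lem:finiteg} to write $X\simeq S^{-V}\wedge\Sigma^\infty A$ for an $H$-representation $V$ and a based $H$-space $A$. Since $V$ embeds $H$-equivariantly into $\res_H^G W$ for some $G$-representation $W$ (by Frobenius reciprocity and complete reducibility), writing $\res_H^G W\cong V\oplus V'$ gives $X\simeq \Sigma^\infty(S^{V'}\wedge A)\wedge\res_H^G(S^{-W})$, so the projection formula of \Cref{sec:restriction} yields $\Ind_H^G X\simeq \Ind_H^G\bigl(\Sigma^\infty(S^{V'}\wedge A)\bigr)\wedge S^{-W}$. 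Applying $\Phi^K$ and using that $\Phi^K(S^{-W})\simeq S^{-W^K}$ is a (non-equivariant) sphere, I conclude that the type of $\Phi^K(\Ind_H^G X)$ equals that of $\Phi^K\bigl(\Ind_H^G\Sigma^\infty(S^{V'}\wedge A)\bigr)$, and similarly that the type of $\Phi^{K'}(X)$ equals that of $\Phi^{K'}\bigl(\Sigma^\infty(S^{V'}\wedge A)\bigr)$ for all $K'$. Renaming $S^{V'}\wedge A$ as $A$, I may therefore assume $X=\Sigma^\infty A$.

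Using $\Ind_H^G\Sigma^\infty A\simeq\Sigma^\infty(G_+\wedge_H A)$ and $\Phi^K(\Sigma^\infty B)\simeq\Sigma^\infty(B^K)$ (\Cref{sec:restriction}, \Cref{sec:geometricfixed}), the task becomes to understand the based space $(G_+\wedge_H A)^K$. As $G\to G/H$ is a principal $H$-bundle and $G_+\wedge_H A\to G/H_+$ the associated bundle with fibre $A$, the coset $gH$ lies in $(G/H)^K$ exactly when $g^{-1}Kg\subseteq H$, and over such a coset the residual $K$-action on the fibre is through $g^{-1}Kg\hookrightarrow H$; hence $(G_+\wedge_H A)^K$ is obtained, by collapsing the basepoint section, from the bundle over $(G/H)^K$ with fibre $A^{g^{-1}Kg}$ over $gH$. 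This gives $iii)$ at once: if $K$ is not subconjugate to $H$ then $(G/H)^K=\varnothing$, so $(G_+\wedge_H A)^K=\ast$ and $\Phi^K(\Ind_H^G X)=0$. For $i)$, with $K=H$, the condition $g^{-1}Hg\subseteq H$ forces $g\in N_G(H)$ (as in \Cref{lem:primerelations}), and since elements of $H$ act trivially on $A^H$ the bundle is trivial, so $(G_+\wedge_H A)^H\cong (W_GH)_+\wedge A^H$. As $W_GH$ is a nonempty compact Lie group, $K(n)_*(W_GH)$ is a nonzero free $K(n)_*$-module for every $n$, so the K{\"u}nneth isomorphism for Morava $K$-theory shows that smashing with $\Sigma^\infty(W_GH)_+$ leaves the type unchanged; together with the reduction this proves $i)$.

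For $ii)$ I would argue as follows. The fixed set $(G/H)^K$ is a compact smooth submanifold of $G/H$, in particular a finite CW complex, and the continuous map $(G/H)^K\to\Sub(H)/H$, $gH\mapsto[g^{-1}Kg]$, takes only finitely many values $[K_1],\dots,[K_r]$ (\Cref{rem:subconjugacy}), so $(G/H)^K$ is a disjoint union of clopen pieces $P_j$ over which the bundle above has fibre the based space $A^{K_j}$; here one uses that $A^{K_j}$ depends only on $K_j$ as a subgroup, so the bundle is genuinely locally trivial (local trivialisations coming from local sections of $G\to G/H$ corrected by local sections of $H\to H/N_H(K_j)$), and no point-set hypotheses on $A$ are needed since the cell structure lives on the base. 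Filtering $(G_+\wedge_H A)^K$ by the preimages of the skeleta of a finite CW structure on $\coprod_j P_j$, the successive quotients are finite wedges of suspensions of the $\Sigma^\infty A^{K_j}$, so $\Sigma^\infty\bigl((G_+\wedge_H A)^K\bigr)$ lies in the thick subcategory of $\mathcal{SH}^c_{(p)}$ generated by $\Sigma^\infty A^{K_1},\dots,\Sigma^\infty A^{K_r}$. By the thick subcategory theorem (\Cref{thm:classicaltst}) — concretely, because $K(n)_*$ of any object of this thick subcategory vanishes once all $K(n)_*(\Sigma^\infty A^{K_j})$ vanish — the type of $\Phi^K(\Ind_H^G X)$ is at least $\min_j\type{\Sigma^\infty A^{K_j}}$, and this minimum equals $\min\{\type{\Phi^{K'}(X)}\mid K'\subseteq H,\, K'\sim_G K\}$ because the type of $\Phi^{K'}(\Sigma^\infty A)=\Sigma^\infty(A^{K'})$ depends only on the $H$-conjugacy class of $K'$. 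With the first step this proves $ii)$.

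The main obstacle is the fibrewise bundle analysis underlying $ii)$: one must carefully verify that taking fibrewise $K$-fixed points of $G_+\wedge_H A$ over $(G/H)^K$ produces a locally trivial bundle of based spaces over each stratum $P_j$ — so that a cell decomposition of the base yields exactly the stated suspension-spectrum building blocks — and that $(G/H)^K$ really splits into finitely many such strata, which I would deduce from \Cref{rem:subconjugacy}. By contrast the remaining ingredients — Frobenius reciprocity, the compatibility of $\Phi^K$ with $\Sigma^\infty$ and with smash products, and the non-equivariant thick subcategory theorem — are standard.
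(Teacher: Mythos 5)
Your proposal is correct, and its overall strategy is the same as the paper's: reduce to suspension spectra via \Cref{lem:finiteg}, an embedding $V\subseteq\res_H^GW$ and Frobenius reciprocity (the paper does this reduction last rather than first, which is immaterial), then analyse $(G_+\wedge_H A)^K$ fibrewise over $(G/H)^K$, with the fibre over $gH$ identified as $A^{K^g}$, and conclude by a thick-subcategory/K\"unneth argument; parts $i)$ and $iii)$ come out identically in both treatments. The difference is in how the middle step is organised: the paper proves an explicit homeomorphism $(G_+\wedge_H A)^K\cong\bigvee_i(N_G(K)\cdot g_i)_+\wedge_{N_H(K^{g_i})}A^{K^{g_i}}$, indexed by the $N_G(K)$-orbits of $(G/H)^K$, and then uses that $N_G(K)$ admits a finite free $N_H(K^{g_i})$-CW structure; you instead stratify $(G/H)^K$ by the value of the continuous map $gH\mapsto[K^g]\in\Sub(H)/H$ (clopen strata, by finiteness from \Cref{rem:subconjugacy} together with $\Sub(H)/H$ being Hausdorff — in fact your strata coincide with the paper's $N_G(K)$-orbits) and run a skeletal filtration of the finite CW base of the locally trivial fixed-point bundle. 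This buys you a slightly softer argument that avoids writing down the explicit wedge decomposition, at the cost of having to justify local triviality and triviality over cells; your parenthetical reduction of the structure group to the compact Lie group $N_H(K_j)$ does exactly that, and is of the same nature as the free-cell-structure argument (and the homeomorphism verification delegated to \cite[Prop.~B.17]{Sch18}) in the paper. The only place I would add a word of care is the point-set step where you pass from a cell-wise trivialised bundle to the wedge of suspensions $\Sigma^n\Sigma^\infty A^{K_j}$ when $A$ is a non-compact ($p$-localised) complex; this is harmless (e.g.\ reduce to the genuine finite complex $B$ of \Cref{lem:finiteg} and pass to retracts and localisations at the spectrum level, or use the associated-bundle description over a finite free $N_H(K_j)$-CW principal bundle), and is comparable to the care the paper itself takes. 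Your invocation of the thick subcategory theorem for part $ii)$ is also stronger than needed: you only use that spectra with vanishing $K(n)_*$ form a thick subcategory, which is elementary.
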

\begin{remark} \label{rem:subconjugacy} 
Before we prove the lemma, we note that the set of $H$-conjugacy classes of such subgroups $K'$ is always finite, by the following argument: For an element $g\in G$, the conjugate $K^g=g^{-1}Kg$ is a subgroup of $H$ if and only if $gH$ is a $K$-fixed point in $G/H$. Moreover, if two elements $g$ and $\widetilde{g}$ represent the same coset in this $K$-fixed space, then the conjugates $K^g$ and $K^{\widetilde{g}}$ are $H$-conjugate. Hence one obtains a well-defined surjective map
\begin{align*} (G/H)^K &\to \{ \text{$H$-conjugacy classes of subgroups $K'\subseteq H$ conjugate to $K$}\}\\
gH & \mapsto [K^g].
\end{align*}
This map is not injective, but it becomes so after quotienting out by the residual action of the normalizer $N_G(K)$ on $(G/H)^K$. Indeed, if $n\in N_G(K)$, then $K^{ng}=(K^n)^g=K^g$, hence the map factors through the quotient, and if $K^g=(K^{\widetilde{g}})^h$ for some $h\in H$, then $\widetilde{g}hg^{-1}\in N_G(K)$ and $(\widetilde{g}hg^{-1})gH=\widetilde{g}H$. Now, the set $(G/H)^K/N_G(K)$ is finite as a consequence of the Montgomery--Zippin theorem, see \cite[Cor. II.5.7]{Bre72}. We note also that this implies that the $N_G(K)$-orbits of $(G/H)^K$ are open, and hence $(G/H)^K$ decomposes as the topological disjoint union over these.
\end{remark}
\begin{proof}[Proof of \Cref{lem:induced}] Let $K$ be a closed subgroup of $G$, later in the proof we check what happens in the special case where $K=H$. We start by showing the claims for suspension spectra. Let $A$ be a (not neccesarily finite) based $H$-CW complex and consider the based $G$-space $G_+\wedge_H A$, i.e., the space level induction of $A$ to a $G$-space. We want to understand the $K$-fixed points $(G_+\wedge_H A)^K$. Let $g_1,\hdots,g_s\in G$ be a set of representatives of the $N_G(K)$-orbits of $(G/H)^K$, as in the above remark. Note that if $n\in N_G(K)$ and $a\in A^{K^{g_i}}$, then the class $[(ng_i,a)]\in G_+\wedge_H A$ is $K$-fixed, since 
\[ [(kng_i,a)]=[(ng_i(g_i^{-1}n^{-1}kng_i),a)]=[(ng_i,(g_i^{-1}n^{-1}kng_i)a)]=[(ng_i,a)]. \]
The last step uses that $g_i^{-1}n^{-1}kng_i\in K^{g_i}$, because $n$ normalizes $K$. Moreover, if $\widetilde{n}\in N_H(K^{g_i})$, then $g_i\widetilde{n}g_i^{-1}\in N_G(K)$ and
\[ 
[(ng_i,\widetilde{n}a)]=[(ng_i\widetilde{n},a)]=[(n(g_i\widetilde{n}g_i^{-1})g_i,a)]. 
\]
Thus we obtain a continuous map
\begin{align*} \bigvee_{g_1,\hdots,g_s} (N_G(K)\cdot g_i)_+\wedge_{N_H(K^{g_i})} A^{K^{g_i}} & \to  (G_+\wedge_H A)^K\\
[(ng_i,a)] & \mapsto [(ng_i,a)]
\end{align*}
with $N_H(K^{g_i})$ acting on $N_G(K)\cdot g_i$ by right multiplication. We leave it to the reader to check that this map is bijective. It is then not hard to show that it is a homeomorphism, using that $(G/H)^{K}$ decomposes as a topological disjoint union of its $N_G(K)$-orbits as explained in \Cref{rem:subconjugacy} (see also the proof of \cite[Prop. B.17]{Sch18}). Thus, in summary, for based $H$-spaces $A$ there is a natural homeomorphism
\[ 
(G_+\wedge_H A)^K\cong \bigvee_{g_1,\hdots,g_s} (N_G(K)\cdot g_i)_+\wedge_{N_H(K^{g_i})} A^{K^{g_i}}, 
\]
where the $g_i$ are elements of $G$ such that $K^{g_1},\hdots,K^{g_s}$ are representatives of $H$-conjugacy classes of subgroups of $H$ that are conjugate to $K$ in $G$. Hence, using that geometric fixed points and induction commute with suspension spectra (\Cref{sec:geometricfixed} and \Cref{sec:restriction}), we obtain an equivalence
\[ 
\Phi^K(\Ind_H^G(\Sigma^{\infty} A))\simeq \bigvee_{g_1,\hdots,g_s} \Sigma^{\infty} ((N_G(K)\cdot g_i)_+\wedge_{N_H(K^{g_i})} A^{K^{g_i}}). 
\]
Now we assume that (the $p$-localization of) $\Sigma^{\infty}A$ is a finite $p$-local $H$-spectrum. Then, in order to prove Part $ii)$ we need to show that the type of $(N_G(K)\cdot g_i)_+\wedge_{N_H(K^{g_i})} A^{K^{g_i}}$ is at least the type of $A^{K^{g_i}}$. This follows from the fact that the $N_H(K^{g_i})$-action on $N_G(K)$ is free, hence $N_G(K)$ is $N_H(K^{g_i})$-equivariantly built out of cells of the form $N_H(K^{g_i})_+\wedge D^n$. Therefore, $\Sigma^{\infty}(N_G(K)\cdot g_i)_+\wedge_{N_H(K^{g_i})} A^{K^{g_i}}$ lies in the thick subcategory generated by $\Sigma^{\infty} (N_H(K^{g_i})_+\wedge_{N_H(K^{g_i})} A^{K^{g_i}})\simeq \Sigma^{\infty} A^{K^{g_i}}$, hence its type is at least as large. Furthermore, if no conjugate of $K$ is a subgroup of $H$, then $(G_+\wedge_H A)^K$ consists of a point, hence $\Phi^K(\Ind_H^G(\Sigma^{\infty} A))$ is trivial, proving Part $iii)$ for suspension spectra.

\begin{remark} \label{rem:finequal} If $G$ is finite, then $N_G(K)\cdot g_i$ is discrete,  so picking a set of representatives of $N_H(K^{g_i})$-orbits gives rise to a homeomorphism
\[ (N_G(K)\cdot g_i)_+\wedge_{N_H(K^{g_i})} A^{K^{g_i}}\cong (N_G(K)\cdot g_i)/N_H(K^{g_i})_+\wedge A^{K^{g_i}}, \]
and hence the type of $(N_G(K)\cdot g_i)_+\wedge_{N_H(K^{g_i})} A^{K^{g_i}}$ is in fact equal to that of $A^{K^{g_i}}$. 
Such an argument does not work for general compact Lie groups, because the projection $N_G(K)\cdot g_i\to (N_G(K)\cdot g_i)/N_H(K^{g_i})$ need not split. See \Cref{rem:counter} for a counterexample.
\end{remark}
When $K$ is equal to $H$, the formula above consists of only one summand corresponding to $H$ itself, since $H$ is not conjugate to a proper subgroup of itself (see the proof of \Cref{lem:primerelations}). As $N_H(H)=H$, this summand is given by $\Sigma^{\infty} N_G(H)_+\wedge_H A^H$. The $H$-action on $A^H$ is trivial, hence $N_G(H)_+\wedge_H A^H$ is homeomorphic to $(N_G(H)/H)_+\wedge A^H$, whose type equals that of $A^H$ since it contains $A^H$ as a retract (and on the other hand lies in the thick subcategory generated by $A^H$). Therefore, the type of $\Phi^H(\Ind_H^G(\Sigma^{\infty}A))$ equals the type of $\Phi^H(\Sigma^{\infty}A)$, as desired. This finishes the proof of the lemma for finite $G$-spectra of the form $\Sigma^{\infty}A$.

Now, by \Cref{lem:finiteg}, a general finite $H$-spectrum $X$ is of the form $S^{-V}\wedge \Sigma^{\infty} A$ for $A$ a homotopy retract of the $p$-localization of a finite $H$-CW complex and an $H$-representation $V$. By a standard fact in representation theory of compact Lie groups (see for example \cite[Thm. III.4.5]{BtD85}) there exists a $G$-representation $W$ such that $V$ sits inside the restriction $\res^G_H W$ of $W$ to an $H$-representation. Then
\[ 
X\simeq S^{-V}\wedge \Sigma^{\infty} A\simeq (\res_H^G S^{-W})\wedge \Sigma^{\infty} (S^{(\res_H^GW)-V}\wedge A). 
\]
Let $A'=S^{(\res_H^GW)-V}\wedge A$, which is again a homotopy retract of the $p$-localization of a finite $H$-CW complex and hence $\Sigma^{\infty}A'$ is a finite $p$-local $H$-spectrum. Then $X\simeq \res_H^G S^{-W}\wedge \Sigma^{\infty} A'$ and by Frobenius reciprocity (see \Cref{sec:restriction}) there is an equivalence
\[ 
\Ind_H^G(X)\simeq \Ind_H^G((\res_H^G S^{-W})\wedge \Sigma^{\infty} A')\simeq S^{-W}\wedge \Ind_H^G(\Sigma^{\infty} A'). 
\]
Note that smashing with a representation sphere has no effect on the type of the geometric fixed points for any subgroup. Hence, for a closed subgroup $K$ of $G$ we obtain
\begin{align*} \type{K}(\Ind_H^G(X)))  &=\type{K}(\Ind_H^G(\Sigma^{\infty} A')) \\
&\geq \min(\type{K'}(\Sigma^{\infty}A')\ |\ K'\subseteq H, K'\sim K)  \\
& =\min(\type{K'}(X)\ |\ K'\subseteq H, K'\sim K), \end{align*}
as desired, with equality in the case $K=H$. Finally, if $K$ is not conjugate to a subgroup of $H$, then
\[ \Phi^K(\Ind_H^G(X))\simeq \Phi^K(S^{-W})\wedge \Phi^K(\Ind_H^G(\Sigma^{\infty} A'))\simeq *, \]
since $\Phi^K(\Ind_H^G(\Sigma^{\infty} A'))\simeq *$.
\end{proof}

\bibliographystyle{amsalpha}
\bibliography{literature}

\end{document}